\documentclass[a4paper,11pt]{article}
\usepackage[T2A]{fontenc}
\usepackage[utf8]{inputenc}
\usepackage[russian,english]{babel}

\usepackage{latexsym}
\usepackage{geometry}
\usepackage{graphicx}
\usepackage{amsfonts}
\usepackage{amsthm}
\usepackage{amsmath}
\usepackage{amssymb}
\usepackage{xcolor}
\usepackage{mathrsfs}
\usepackage{bookmark}
\usepackage{hyperref}
\usepackage[noadjust]{cite}

\usepackage{accents}
\usepackage{indentfirst}
\usepackage{enumerate}
\usepackage{mathtools}
\usepackage{csquotes}
\usepackage{setspace}
\usepackage{cite}
\usepackage{array}
\usepackage{float}
\usepackage{arydshln}
\usepackage{pdflscape}

\usepackage{longtable}
\usepackage{multirow}
\usepackage{multicol}
\usepackage{placeins}
\usepackage{makecell}

\usepackage{enumitem}
\setlist[enumerate]{font=\upshape,label=(\arabic*)}

\newtheorem{theorem}{Theorem}[section]
\newtheorem{corollary}[theorem]{Corollary}
\newtheorem{proposition}[theorem]{Proposition}
\newtheorem{lemma}[theorem]{Lemma}
\theoremstyle{definition}
\newtheorem{definition}[theorem]{Definition}
\newtheorem{descript}[theorem]{Description}

\newtheorem{notation}[theorem]{Notation}
\newtheorem{example}[theorem]{Example}
\newtheorem{remark}[theorem]{Remark}

\numberwithin{equation}{section}
\allowdisplaybreaks

\let\phi\varphi

\renewcommand{\Re}{\mathop{\mathfrak{Re}}\nolimits}
\renewcommand{\Im}{\mathop{\mathfrak{Im}}\nolimits}

\def\A{\mathcal{A}}
\def\E{\mathcal{E}}
\def\Hyp{\mathcal{H}}
\def\Main{\mathcal{M}}

\DeclareMathOperator{\chrs}{char}

\DeclareMathOperator{\sgn}{sgn}

\DeclareMathOperator{\Lin}{Lin}

\newcommand\til[1]{\widetilde{\phantom{,\mkern-4mu} #1 \phantom{,\mkern-4mu}}\mkern-1mu}

\newcommand\numeq[1]%
  {\stackrel{\scriptstyle(\mkern-1.5mu#1\mkern-1.5mu)}{=}}

\newcounter{rowcntr}[table]
\renewcommand{\therowcntr}{3.20.\arabic{rowcntr}}
\newcolumntype{N}{>{\refstepcounter{rowcntr}\therowcntr}c}
\AtBeginEnvironment{tabular}{\setcounter{rowcntr}{0}}

\providecommand{\keywords}[1]{\textbf{Keywords:} #1}
\providecommand{\msc}[1]{\textbf{MSC 2020:} #1}

\newcommand\freefootnote[1]{%
    \bgroup
    \renewcommand\thefootnote{\fnsymbol{footnote}}%
    \renewcommand\thempfootnote{\fnsymbol{mpfootnote}}%
    \footnotetext[0]{#1}%
    \egroup
}

\begin{document}

\title{Orthogonality graphs of real Cayley--Dickson algebras. Part II: The subgraph on pairs of basis elements}
\author{
Svetlana Zhilina$^{a,b}$
}
\date{\small \em
$^a$Department of Mathematics and Mechanics, Lomonosov Moscow State\\ University, Moscow, 119991, Russia\\
$^b$Moscow Center of Fundamental and Applied Mathematics, Moscow, 119991, Russia
}

\maketitle

\begin{abstract}
We consider zero divisors of an arbitrary real Cayley--Dickson algebra such that their components are both standard basis elements. We construct inductively the orthogonality graph on these elements. Then we show that, if we restrict our attention to at least $16$-dimensional algebras, two algebras are isomorphic if and only if their graphs are isomorphic. We also provide an algorithm to retrieve the Cayley--Dickson parameters of an algebra from its graph.
\end{abstract}

\keywords{Cayley--Dickson algebras, standard basis elements, orthogonality graphs, graph isomorphism, algebra isomorphism.}

\msc{05C25, 17A20, 17D05, 17D99.}

\freefootnote{This work was supported by the Russian Science Foundation (project No. 17-11-01124).}

\freefootnote{Email address: \texttt{s.a.zhilina@gmail.com}}

\section{Introduction}

Cayley--Dickson algebras are a family of $2^n$-dimensional algebras $\A_n$ over $\mathbb{R}$, $n \in \mathbb{N}_0$, which generalize the real numbers, the complex numbers, the quaternions, and the octonions. These four algebras possess a pleasant property of alternativity, however, it is well known that Cayley--Dickson algebras stop being alternative for $n \geq 4$. One of the consequences is the existence of zero divisors in the sedenions and other higher-dimensional algebras, and the problem of their classification is not a trivial one.

At present most of the authors restrict their attention to the algebras of the main sequence which we denote by~$\Main_n$. In these algebras all parameters which determine the Cayley--Dickson construction are assumed to be equal to $-1$. The most successful efforts have been taken by Moreno~\cite{moreno, moreno_constructing} and Biss, Dugger, and Isaksen~\cite{biss, biss2}. Moreno's key idea was to study doubly alternative zero divisors, that is, such elements that their components are both alternative elements of the previous algebra. This notion was extended in~\cite{our_split-algebras} to the Cayley--Dickson split-algebras, denoted by~$\Hyp_n$, and led to similar results.

A particular case of doubly alternative zero divisors are the elements whose components are both standard basis elements up to sign, that is, which are of the form $(e_i, \pm e_j)$. In some higher-dimensional ($n = 4,5,6$) algebras of the main sequence such zero divisors have been studied by de Marrais, see~\cite{marrais, marrais2, marrais3}.

We are also concerned with a convenient representation of relations between zero divisors. One of the possible solutions is to draw a zero divisor graph or an orthogonality graph of the algebra. Actually, studying relation graphs of various algebras has become a rapidly extending branch of mathematics.

Relation graphs of some low-dimensional Cayley--Dickson algebras have been recently described, see~\cite{our_sedenions} for the sedenions, and~\cite{our_split-algebras, our_split-sedenions} for the split-complex numbers, the split-quaternions, the split-octonions, and the split-sedenions. The proof relies vastly on the fact that for $n \leq 4$ all elements of $\A_n$ are doubly alternative. However, when the dimension reaches $32$, the graphs become almost impossible to study.

Still, we desire to distinguish different Cayley--Dickson algebras by their relation graphs. For this purpose, we take a part of the orthogonality graph which is easier to consider, namely, the subgraph on the vertices of the form $(e_i, \pm e_j)$ with $i \neq 0$. We denote it by $\Gamma_e(\A_n)$. In Section~\ref{section:another-approach} we describe another approach to the choice of a convenient subgraph.

This paper is a sequel to our previous work~\cite{our_orthographs1}, which explores zero divisors in arbitrary real Cayley--Dickson algebras whose components satisfy some additional conditions on the norm and alternativity. We have discovered that they form hexagonal patterns in orthogonality and zero divisor graphs, see~\cite[Corollary~3.7]{our_orthographs1}. In the algebras of the main sequence, any pair of zero divisors generates the so-called double hexagon, see~\cite[Description~4.8]{our_orthographs1}, whose vertices have a convenient multiplication table, cf.~\cite[Theorem~4.11]{our_orthographs1}. As a preparation for this article, we have also determined orthogonality conditions for zero divisors of the form $(e_i, \pm e_j)$. These conditions are summarized in Theorem~\ref{theorem:orthogonality-conditions}.

Vastly based on the results mentioned, in Theorem~\ref{theorem:nonspecial-components} we construct inductively the graph $\Gamma_e(\A_{n+1})$ for an arbitrary real Cayley--Dickson algebra $\A_{n+1}$. We prove also that for $n \geq 3$ the graph contains either $2^n$ or $2^n - 1$ connected components, depending on the last Cayley--Dickson parameter $\gamma_n$, and the diameter of each connected component equals three, see Corollary~\ref{corollary:diameters}. 

Section~\ref{section:retrieving-parameters} is devoted to the converse problem, namely, we retrieve all Cayley--Dickson parameters of $\A_{n+1}$ from $\Gamma_e(\A_{n+1})$. We solve this problem in several steps:
\begin{itemize}
    \item Corollary~\ref{corollary:diameters} provides an immediate method to obtain $\gamma_n$ for $n \geq 3$.
    \item If $n \geq 4$, then, by Lemma~\ref{lemma:find-special-components}, we can also determine $\gamma_{n-1}$.
    \item We then use Theorem~\ref{theorem:construct-graphs-recursively} and reduce $\Gamma_e(\A_{n+1})$ into graphs $\Gamma_e(\A^{\circ}_n)$ and $\Gamma_e(\A^{\bullet}_n)$, where $\A^{\circ}_n$ and $\A^{\bullet}_n$ have the same first $n-1$ parameters as $\A_{n+1}$, and their last parameters are equal to $\gamma_n$ and $\gamma_{n-1} \gamma_n$, respectively.
    \item Proceeding recursively, we obtain Corollary~\ref{corollary:isomorphic-algebras} and Theorem~\ref{theorem:isomorphic-graphs} which state that for $n \geq 3$ two algebras are isomorphic if and only if their graphs are isomorphic. 
\end{itemize}

The crucial ingredient in our last step is a work by Eakin and Sathaye~\cite{eakin} on Cayley--Dickson algebras automorphisms.

\section{An overview of real Cayley--Dickson algebras} \label{section:A_n}

\subsection{Algebraic relations and their graphs} \label{subsection:definitions}

Let~$\mathbb{F}$ be an arbitrary field and $(\A, +, \cdot)$ be an algebra with a unity $1_{\A}$ over the field~$\mathbb{F}$. $\A$ is not assumed to be commutative or associative.
We denote the set of zero divisors (left, right, or two-sided) of $\A$ by $Z(\A)$, and the set of two-sided zero divisors of $\A$ by $Z_{LR}(\A)$. However, by~\cite[Corollary~4.6]{our_split-algebras}, in case of real Cayley--Dickson algebras these two sets coincide, that is, $Z(\A_n) = Z_{LR}(\A_n)$.

We say that $a$ and $b$ in $\A$ {\em are orthogonal} if $ab = ba = 0$. The orthogonalizer of $a \in \A$, that is, the set of all elements which are orthogonal to $a$, is denoted by $O_\A(a)$. Clearly, $O_\A(a)$ is a vector space over~$\mathbb{F}$.

\begin{notation}
For any set $X \subseteq \A$ we denote the set of lines passing through elements of $X$ by
$$
P(X) = \{ [x] = \mathbb{F} x \; | \; x \in X \}.
$$
\end{notation}

\begin{definition}
Let $\A$ be an arbitrary algebra.
\begin{itemize}
\item 
{\em The orthogonality graph} $\Gamma_O(\A)$ is defined as follows: its vertices are lines in $Z_{LR}(\A)$, that is,
$$
V(\Gamma_O(\A)) = P(Z_{LR}(\A)),
$$
and distinct vertices $[a]$ and $[b]$ are adjacent if and only if $ab=ba=0$.
\item
{\em The directed zero divisor graph} $\Gamma_Z(\A)$ is defined as follows: its vertices are lines in $Z(\A)$, that is,
$$
V(\Gamma_Z(\A)) = P(Z(\A)),
$$
and distinct vertices $[a]$ and $[b]$ form a directed edge $([a], [b])$ if and only if $ab=0$.
\end{itemize}
\end{definition}

Note that the edges of $\Gamma_O(\A)$ and $\Gamma_Z(\A)$ are well-defined. When speaking of the vertices of these graphs, we will not distinguish between a nonzero element $a$ and a line $[a] = \mathbb{F} a$ passing through it.

In an undirected graph $\Gamma$, $d(x,y) = d_{\Gamma}(x,y)$ denotes the distance between two vertices $x$ and $y$, and $d(\Gamma) = \sup\limits_{x,y \in \Gamma} d(x,y)$ denotes the diameter of $\Gamma$.

\subsection{Constructing Cayley--Dickson algebras} \label{subsection:A_n}

We refer the reader to~\cite{mccrimmon,schafer} for auxiliary definitions and general properties of Cayley--Dickson algebras.

\begin{definition} \label{definition:Cayley--Dickson-algebras}
Let $\A$ be an algebra over a field~$\mathbb{F}$ with an involution $a \mapsto \bar{a}$. The algebra $\A \{ \gamma \}$ produced by the Cayley--Dickson process, when applied to $\A$ with the parameter $\gamma \in \mathbb{F}$, $\gamma \neq 0$, is defined as the set of ordered pairs of elements of $\A$ with operations
\begin{align*}
\alpha(a,b)&=(\alpha a, \alpha b);\\
(a,b)+(c,d)&=(a+c,b+d);\\
(a,b)(c,d)&=(ac+\gamma \bar{d}b,da+b\bar{c})
\end{align*} 
and the involution
$$
\qquad (\overline{a,b})=(\bar{a},-b), \qquad a,b,c,d\in \A, \ \alpha \in \mathbb{F}.
$$
If the involution on $\A$ is regular, that is, $a + \bar{a} \in \mathbb{F}1_{\A}$ and $a\bar{a} = \bar{a}a \in \mathbb{F}1_{\A}$ for all $a \in \A$, then the involution on $\A \{ \gamma \}$ is also regular, cf.~\cite[p.~435]{schafer}.
\end{definition}

Henceforth we assume that $\mathbb{F} = \mathbb{R}$. We now define an arbitrary real Cayley--Dickson algebra which is determined by the set of its parameters. The most common definition of real Cayley--Dickson algebras assumes that all parameters are equal to $-1$. These particular algebras are what we call the algebras of the main sequence.

\begin{definition} \label{definition:A_n}
For every integer $n \geq 0$ and nonzero real numbers $\gamma_0, \dots, \gamma_{n-1}$ we define the real Cayley--Dickson algebra $\A_n = \A_n \{ \gamma_0, \dots, \gamma_{n-1} \}$ inductively:
\begin{enumerate} 
\item $\A_0 = \mathbb{R}$, and $e^{(0)}_0 = 1$ is its only basis element;
\item If $\A_n \{ \gamma_0, \dots, \gamma_{n-1} \}$ is constructed, then $\A_{n+1} \{ \gamma_0, \dots, \gamma_n \} =
\\
(\A_n \{ \gamma_0, \dots, \gamma_{n-1} \}) \{ \gamma_n \}$. Its basis elements are $e^{(n+1)}_0, \dots, e^{(n+1)}_{2^{n+1}-1}$ such that
\begin{equation*}
e^{(n+1)}_j =
\begin{cases}
(e^{(n)}_j,0), & 0 \leq j \leq 2^n-1,\\
(0,e^{(n)}_{j-2^n}), & 2^n \leq j \leq 2^{n+1}-1.
\end{cases}
\end{equation*}
\end{enumerate}
\end{definition}

For every integer $n \geq 0$ the structure $\A_n$ in Definition~\ref{definition:A_n} is a $2^n$-dimensional algebra over $\mathbb{R}$ with the unit element $e^{(n)}_0$ and a regular involution, cf.~\cite[Lemma 3.14]{our_split-algebras}.
Consider the following definitions which are analogous to those for complex numbers.

\begin{definition} \label{definition:real-imaginary-part}
\leavevmode
\begin{itemize}
	\item Let $a \in \A_n$. Its {\em real part} is $\Re(a) = \frac{a + \bar{a}}{2}$, its {\em imaginary part} is $\Im(a) = \frac{a - \bar{a}}{2}$, and its {\em norm} is $n(a) = a \bar{a} = \bar{a}a$. Since the involution on $\A_n$ is regular, $\Re(a), n(a) \in \mathbb{R}$.
	\item An element $a \in \A_n$ is said to be {\em pure} if $\Re(a)=0$.
	\item An element $(a, b) \in \A_{n+1}$ is said to be {\em doubly pure} if $\Re(a) = \Re(b) = 0$.
\end{itemize}
\end{definition}

\begin{proposition}[{\cite[p. 435]{schafer}}]
We can compute real part and norm of an element $(a,b) \in \A_{n+1}$ inductively by using the following equalities:
\begin{align*}
	\Re((a,b)) &= \Re(a),\\
	n((a,b)) &= n(a) - \gamma_n n(b).
\end{align*}
\end{proposition}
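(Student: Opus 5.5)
The plan is a direct computation from Definition~\ref{definition:Cayley--Dickson-algebras}. Write $x = (a,b) \in \A_{n+1}$, so that $\bar{x} = (\bar{a}, -b)$. I will take for granted that $\A_n$ carries a regular involution, as noted after Definition~\ref{definition:A_n}. In particular, for $a \in \A_n$ both $a + \bar{a}$ and $a\bar{a} = \bar{a}a$ are real scalars, and the involution is an $\mathbb{R}$-linear anti-automorphism of order two. I will also use that real scalars $\alpha 1$ are central and are fixed by the involution.

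\textbf{Real part.} By definition,
$$
x + \bar{x} = (a + \bar{a},\, b - b) = (a + \bar{a},\, 0).
$$
Since $a + \bar{a} = 2\Re(a)\, e^{(n)}_0$, this equals $2\Re(a)\, e^{(n+1)}_0$. Here I use that $(e^{(n)}_0, 0) = e^{(n+1)}_0$ is the unit of $\A_{n+1}$. Hence $\Re((a,b)) = \Re(a)$.

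\textbf{Norm.} Apply the multiplication rule $(a,b)(c,d) = (ac + \gamma_n \bar{d} b,\, da + b\bar{c})$ with $c = \bar{a}$ and $d = -b$:
$$
x\bar{x} = \bigl(a\bar{a} - \gamma_n \bar{b} b,\; -ba + b\bar{\bar{a}}\bigr) = \bigl(n(a) - \gamma_n n(b),\; -ba + ba\bigr) = \bigl(n(a) - \gamma_n n(b),\, 0\bigr).
$$
This uses $\bar{\bar{a}} = a$ and $\bar{b}b = b\bar{b} = n(b)$. Similarly, with first factor $(\bar{a}, -b)$ and second factor $(a, b)$:
$$
\bar{x}x = \bigl(\bar{a}a + \gamma_n \bar{b}(-b),\; b\bar{a} - b\bar{a}\bigr) = \bigl(n(a) - \gamma_n n(b),\, 0\bigr).
$$
So $x\bar{x} = \bar{x}x$ is the real scalar $n(a) - \gamma_n n(b)$ times the unit, which gives the formula for $n((a,b))$.

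The only step needing care is the sign bookkeeping in the second component of each product. Specifically, one must place the factors in the order $da$ and $b\bar{c}$ exactly as the definition prescribes; with that order the terms cancel without any commutativity assumption on $\A_n$. The argument also confirms regularity of the involution on $\A_{n+1}$ along the way, so an induction on $n$ is consistent.
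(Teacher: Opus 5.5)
Your proof is correct: $x+\bar{x}=(a+\bar{a},0)$, and both $x\bar{x}$ and $\bar{x}x$ equal $(n(a)-\gamma_n n(b),0)$. This uses only linearity, $\bar{\bar{a}}=a$, and regularity of the involution on $\A_n$. The paper gives no proof of its own and cites Schafer, and your direct expansion of the multiplication rule is the standard argument for this statement.
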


\begin{notation}
We denote
\begin{align*}
    \E_n &= \left\{ e^{(n)}_0, e^{(n)}_1, \dots, e^{(n)}_{2^{n}-1} \right\},\\
    \E'_n &= \left\{ e^{(n)}_1, \dots, e^{(n)}_{2^{n}-1} \right\} = \E_n \setminus \left\{ e^{(n)}_0 \right\},\\
    \E''_n &= \left\{ e^{(n)}_1, \dots, e^{(n)}_{2^{n-1}-1}, e^{(n)}_{2^{n-1}+1}, \dots e^{(n)}_{2^{n}-1} \right\} = \E_n \setminus \left\{ e^{(n)}_0, e^{(n)}_{2^{n-1}} \right\}.
\end{align*}
Clearly, $\E'_n$ is the set of pure basis elements, and $\E''_n$ is the set of doubly pure basis elements. We also denote
$$
\A'_n = \{ a \in \A_n \; | \; \Re(a) = 0 \}.
$$
\end{notation}

\subsection{Some properties of real Cayley--Dickson algebras} \label{subsection:A_n-properties}

Henceforth we assume that $\A$ is an arbitrary algebra over a field~$\mathbb{F}$, and $\A_n = \A_n \{ \gamma_0, \dots, \gamma_{n-1} \}$ is an arbitrary real Cayley--Dickson algebra. By~\cite[Exercise 2.5.1]{mccrimmon}, $\A_n \{ \gamma_0, \dots, \gamma_{n-1} \}$ is isomorphic to $\A_n \{ \sgn(\gamma_0), \dots, \sgn(\gamma_{n-1}) \}$, so it is sufficient to consider only $\gamma_k \in \{ \pm 1 \}$, $k = 0, \dots, n-1$.

The associator of $a,b,c \in \A$ is defined as the element $[a,b,c]=(ab)c-a(bc)$. An algebra $\A$ is called {\em flexible} if for all $a,b \in \A$ the equality $[a,b,a] = 0$ holds. Clearly, in a flexible algebra $\A$, we have $[a,b,c]=-[c,b,a]$ for all $a,b,c \in \A$. An algebra $\A$ is called {\em alternative} if for all $a,b \in \A$ the equalities $[a,a,b] = [b,a,a] = 0$ hold.

It is well known that $\A_n$ is alternative if and only if $n \leq 3$, however, $\A_n$ is always flexible, see, e.g.,~\cite[p. 436, Theorem~1]{schafer}.

\begin{proposition} \label{proposition:lambda-form}
Let $\langle a, b \rangle$ denote a real-valued symmetric bilinear form associated with the quadratic form $n(a)$. Then $\langle a, a \rangle = n(a)$ and $2\langle a,b \rangle = a \bar{b} + b \bar{a} = \bar{a} b + \bar{b} a$ for all $a, b \in \A_n$, cf. \cite[Proposition~3.18, Proposition~3.19]{our_split-algebras}.
\end{proposition}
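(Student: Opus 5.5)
The approach is polarization of the norm, using only that the involution on $\A_n$ is linear and regular. I would define
$$
\langle a, b \rangle = \tfrac{1}{2}\bigl( n(a+b) - n(a) - n(b) \bigr), \qquad a, b \in \A_n,
$$
and identify real scalars with multiples of the unit $e^{(n)}_0$.

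\emph{Step 1: the involution is linear.} I would check this by induction on $n$ from Definition~\ref{definition:Cayley--Dickson-algebras}. On $\A_0 = \mathbb{R}$ it is the identity, and $(\overline{a,b}) = (\bar a, -b)$ is linear whenever $a \mapsto \bar a$ is. Since multiplication is bilinear, the expression $a\bar b + b\bar a$ is then $\mathbb{R}$-bilinear and symmetric in $(a,b)$. The same holds for $\bar a b + \bar b a$.

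\emph{Step 2: expand the norm.} Regularity gives $n(x) = x\bar x = \bar x x \in \mathbb{R}$ for every $x$, as stated in Definition~\ref{definition:real-imaginary-part}. Expanding with Step 1,
$$
n(a+b) = (a+b)(\bar a + \bar b) = a\bar a + a\bar b + b\bar a + b\bar b ,
$$
so $2\langle a,b\rangle = a\bar b + b\bar a$. Expanding instead $n(a+b) = (\bar a + \bar b)(a+b)$ gives $2\langle a,b\rangle = \bar a b + \bar b a$.

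\emph{Step 3: the form is real, symmetric and bilinear.} Its values are real because they are combinations of norms, which are real. Symmetry and bilinearity follow from Step 1 applied to $a\bar b + b\bar a$. Finally, $\langle a,a\rangle = \tfrac12(n(2a) - 2n(a)) = \tfrac12(4n(a) - 2n(a)) = n(a)$. Hence $n$ is a quadratic form and $\langle\cdot,\cdot\rangle$ is its associated symmetric bilinear form.

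There is no genuine obstacle. The only point needing care is that $a\bar b + b\bar a$ lies in $\mathbb{R}e^{(n)}_0$. This follows from writing it as $n(a+b) - n(a) - n(b)$ and invoking regularity of the involution on $\A_n$, which the paper has already recorded (cf.~\cite[Lemma 3.14]{our_split-algebras}).
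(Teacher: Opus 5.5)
Your polarization argument is correct and complete. Linearity of the involution together with regularity ($n(x)=x\bar x=\bar x x\in\mathbb{R}e_0$) is all you need, and any symmetric bilinear form with $\langle a,a\rangle=n(a)$ is forced by polarization to equal $\tfrac12\bigl(n(a+b)-n(a)-n(b)\bigr)$, so your definition is indeed \emph{the} associated form. The paper gives no proof of its own here and only cites \cite[Propositions~3.18, 3.19]{our_split-algebras}; expanding $n(a+b)$ in both orders, as you do, is the standard argument for this result.
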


\begin{definition}
\leavevmode
\begin{itemize}
	\item It is said that the algebra $\A_n \{ \gamma_0, \dots, \gamma_{n-1} \}$ is an algebra of {\em the main sequence} if $\gamma_k = -1$ for each $k = 0, \dots, n-1$. We denote this algebra by~$\Main_n$.
	\item The algebra $\A_n \{ \gamma_0, \dots, \gamma_{n-1} \}$ is called {\em a Cayley--Dickson split-algebra} if $\gamma_k = -1$ for each $k = 0, \dots, n-2$ and $\gamma_{n-1} = 1$.  We denote it by $\Hyp_n$, since the norm in $\Hyp_n$ appears to be hyperbolic.
\end{itemize}
\end{definition}

Clearly, $\Main_n$ and $\Hyp_n$ differ by the last parameter only. In other words, $\Main_n = \Main_{n-1} \{ -1 \}$ and $\Hyp_n = \Main_{n-1} \{ 1 \}$.

\begin{proposition}[{\cite[Proposition~3.31]{our_split-algebras}}] \label{proposition:A_n-euclidean-product}
\leavevmode
\begin{itemize}
	\item Let $a = \sum\limits_{j=0}^{2^n-1} a_j e^{(n)}_j, \ \ b = \sum\limits_{j=0}^{2^n-1} b_j e^{(n)}_j \in \Main_n$. Then $\langle a, b \rangle = \sum\limits_{j=0}^{2^n-1} a_j b_j$ is a Euclidean inner product.
	\item Let $a = \sum\limits_{j=0}^{2^n-1} a_j e^{(n)}_j, \ \ b = \sum\limits_{j=0}^{2^n-1} b_j e^{(n)}_j \in \Hyp_n$. Then $\langle a, b \rangle = \sum\limits_{j=0}^{2^{n-1}-1} a_j b_j - \sum\limits_{j=2^{n-1}}^{2^n-1} a_j b_j$.
\end{itemize}
\end{proposition}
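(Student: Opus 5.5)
Since the proposition concerns only the two special algebras $\Main_n$ and $\Hyp_n$, I will prove the general inductive formula first. I claim that for every real Cayley--Dickson algebra $\A_{n+1} = \A_n\{\gamma_n\}$ and all $a, b, c, d \in \A_n$,
$$
\langle (a,b), (c,d) \rangle = \langle a, c \rangle - \gamma_n \langle b, d \rangle .
$$
Both items then follow by induction on $n$. The base is $\A_0 = \mathbb{R}$ with $\langle a, b \rangle = ab$.

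To prove the claim, I use the polarization identity from Proposition~\ref{proposition:lambda-form}, namely $2\langle x, y \rangle = n(x+y) - n(x) - n(y)$. This holds because $\langle \cdot, \cdot \rangle$ is the symmetric bilinear form attached to the quadratic form $n$. Apply it with $x = (a,b)$ and $y = (c,d)$, and use the norm formula $n((a,b)) = n(a) - \gamma_n n(b)$ from the proposition quoted from~\cite{schafer}. This gives
$$
2\langle (a,b),(c,d)\rangle = \bigl(n(a+c) - n(a) - n(c)\bigr) - \gamma_n \bigl(n(b+d) - n(b) - n(d)\bigr) = 2\langle a,c \rangle - 2\gamma_n \langle b,d \rangle .
$$
Alternatively, one can compute directly from $2\langle x, y \rangle = x\bar{y} + y\bar{x}$, using the multiplication rule and the involution $(\overline{c,d}) = (\bar c, -d)$. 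The first component of $(a,b)(\bar c, -d) + (c,d)(\bar a, -b)$ is $a\bar c + c\bar a - \gamma_n(\bar d b + \bar b d)$, which equals $2\langle a,c\rangle - 2\gamma_n\langle b,d\rangle$. The second component vanishes, because the form takes values in $\mathbb{R}$. The polarization route is cleaner, so I will use that.

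Next comes the induction over basis elements. For $\Main_n$ every parameter is $-1$, so the claim gives $\langle (a,b),(c,d)\rangle = \langle a,c\rangle + \langle b,d\rangle$. By Definition~\ref{definition:A_n}, the coordinates of $(a,b)$ in $\E_{n+1}$ are the coordinates of $a$ followed by those of $b$. The induction hypothesis for $\Main_n$ therefore yields $\sum_{j=0}^{2^{n+1}-1} a_j b_j$. This form is positive definite, hence a Euclidean inner product.

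For $\Hyp_n = \Main_{n-1}\{1\}$, I apply the claim once with $\gamma_{n-1} = 1$. This gives $\langle a,c\rangle - \langle b,d\rangle$, and substituting the $\Main_{n-1}$ formula for both terms gives the stated signature $(2^{n-1}, 2^{n-1})$ expression.

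The only step needing care is the claim itself: confirming that polarization of the quoted norm formula applies, which requires that $n$ on $\A_{n+1}$ is exactly the quadratic form whose polar is $\langle\cdot,\cdot\rangle$. This is precisely the content of Proposition~\ref{proposition:lambda-form}, so I expect no real obstacle.
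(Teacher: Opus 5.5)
Your proof is correct and complete. The paper gives no proof of this proposition; it only quotes it from \cite{our_split-algebras}, so there is no internal argument to compare against.

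Your route is to polarize Schafer's norm recursion, obtaining $\langle (a,b),(c,d)\rangle = \langle a,c\rangle - \gamma_n\langle b,d\rangle$, and then induct on the concatenated coordinates. It uses exactly the two facts the paper records just before the statement: Proposition~\ref{proposition:lambda-form} and the norm formula $n((a,b)) = n(a)-\gamma_n n(b)$. It also proves more than is asked. For arbitrary parameters, $\E_n$ is an orthogonal basis with $\langle e_j,e_j\rangle = n(e_j) = \pm 1$, which is the fact underlying Proposition~\ref{proposition:basis-norms} as well.

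Two cosmetic points:
\begin{itemize}
\item After setting $x=(a,b)$ and $y=(c,d)$, you reuse $a_j, b_j$ for the coordinates of $x$ and $y$. This clashes with $a,b$ as components, so rename them.
\item In your alternative computation, the second component is $(-da+bc)+(-bc+da)$, which cancels identically. You do not need to appeal to the form being real-valued.
\end{itemize}
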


\begin{example}
\leavevmode
\begin{itemize}
	\item The complex numbers $\mathbb{C}$, the quaternions $\mathbb{H}$, the octonions $\mathbb{O}$, and the sedenions $\mathbb{S}$ are the algebras of the main sequence for $n=1,\:2,\:3,$ and $4$, correspondingly, cf.~\cite{baez}.
	\item The split-complex numbers $\hat{\mathbb{C}}$, the split-quaternions $\hat{\mathbb{H}}$, the split-octonions $\hat{\mathbb{O}}$, and the split-sedenions $\hat{\mathbb{S}}$ are the split-algebras for $n=1,\:2,\:3,$ and $4$, correspondingly, see~\cite{bentz, our_split-sedenions}.
\end{itemize}
\end{example}

It follows from Proposition~\ref{proposition:A_n-euclidean-product} that in case of the algebras of the main sequence the norm is anisotropic, while in case of the split-algebras there always exist a nonzero vector of zero norm. For $n \leq 3$ any Cayley--Dickson algebra $\A_n$ is alternative, and thus it is a composition algebra. Conversely, any unital composition algebra $\A$ over a field~$\mathbb{F}$, $\chrs \mathbb{F} \neq 2$, is isomorphic to a Cayley--Dickson algebra with $n \leq 3$, see~\cite[p.~166, Theorem 2.6.2]{mccrimmon}. If such $\A$ has an anisotropic norm, then it is a division algebra, and if $\A$ has an isotropic norm, then $\A$ is a split composition algebra, cf.~\cite[p.~66]{mccrimmon}. Thus $\Main_n$, $0 \leq n \leq 3$, are division algebras, and $\Hyp_n$, $1 \leq n \leq 3$, are split composition algebras. Moreover, the following proposition shows that the only real split composition algebras are $\hat{\mathbb{C}}$, $\hat{\mathbb{H}}$, $\hat{\mathbb{O}}$. This explains the name of the split-algebras $\Hyp_n$.

\begin{proposition}[{\cite[pp.~165-166, Exercise~2.6.1A, Theorem 2.6.2]{mccrimmon}}] \label{proposition:split-algebras}
\strut\newline
It is well known that $\hat{\mathbb{H}} = \mathbb{C} \{1\} \cong \hat{\mathbb{C}} \{-1\} \cong \hat{\mathbb{C}} \{1\}$, and $\hat{\mathbb{O}} = \mathbb{H} \{1\} \cong \hat{\mathbb{H}} \{-1\} \cong \hat{\mathbb{H}} \{1\}$.
\end{proposition}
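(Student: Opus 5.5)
We need to show $\mathbb{C}\{1\} \cong \hat{\mathbb{C}}\{-1\} \cong \hat{\mathbb{C}}\{1\}$ and $\mathbb{H}\{1\} \cong \hat{\mathbb{H}}\{-1\} \cong \hat{\mathbb{H}}\{1\}$. The plan is to use the classification of composition algebras quoted just before the statement. Every algebra involved is a Cayley--Dickson algebra $\A_n$ with $n = 2$ or $n = 3$, so it is alternative and hence a unital composition algebra of dimension $4$ or $8$. By \cite[Theorem~2.6.2]{mccrimmon}, two unital composition algebras of the same dimension are isomorphic if and only if their norm forms are isometric. So it suffices to check two things about each algebra in a chain: that its norm is isotropic, and that its norm has the same signature as the others. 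Over $\mathbb{R}$ a quadratic form is determined up to isometry by its signature (Sylvester).

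The first step is to compute the norm forms from the recursion $n((a,b)) = n(a) - \gamma_n n(b)$. Since $n$ is diagonal in the standard basis, the norm of $\A_n\{\gamma_0,\dots,\gamma_{n-1}\}$ has diagonal coefficients $\prod_{k \in S}(-\gamma_k)$, one for each subset $S \subseteq \{0,\dots,n-1\}$.
\begin{itemize}
\item $\mathbb{C}\{1\}$ has parameters $(-1,1)$. Its coefficients are $1, 1, -1, -1$, so the signature is $(2,2)$.
\item $\hat{\mathbb{C}}\{-1\}$ has parameters $(1,-1)$. Its coefficients are $1, -1, 1, -1$, so the signature is $(2,2)$.
\item $\hat{\mathbb{C}}\{1\}$ has parameters $(1,1)$. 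Its coefficients are $1, -1, -1, 1$, so the signature is $(2,2)$.
\end{itemize}
All three are isotropic, e.g. $n(e_0 + e_2) = 0$ in the first case. So all three are split quaternion algebras, and they are isomorphic.

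For dimension $8$, $\mathbb{H}\{1\}$ has norm $n_{\mathbb{H}}(a) - n_{\mathbb{H}}(b)$, whose signature is $(4,4)$. For $\hat{\mathbb{H}}\{\pm 1\}$ the norm is $n_{\hat{\mathbb{H}}}(a) \pm n_{\hat{\mathbb{H}}}(b)$. Since $n_{\hat{\mathbb{H}}}$ has signature $(2,2)$, the total signature is $(4,4)$ for either sign. Hence all three octonion algebras are split and isomorphic.

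If one wants to avoid invoking the classification theorem, there is a more hands-on alternative: write down explicit isomorphisms. For example, the isomorphism $\hat{\mathbb{C}}\{\gamma\} \to \mathbb{C}\{1\}$ can be built by choosing inside the target a subalgebra isomorphic to $\hat{\mathbb{C}}$ (spanned by $1$ and a pure element $u$ with $u^2 = 1$) and then an element $v$ orthogonal to it with $v^2 = \gamma$. The doubling theorem (\cite[Exercise~2.5.1]{mccrimmon}-style arguments) then gives that the generated subalgebra is $\hat{\mathbb{C}}\{\gamma\}$, and it is the whole algebra by dimension count.

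In this approach the main obstacle is verifying that the doubling embedding is multiplicative. This relies on alternativity and on the identities $vx = \bar{x}v$ and $(xv)(yv) = \gamma\,\bar{y}x$, which hold in composition algebras. For the signature route the only subtle point is citing correctly that, over $\mathbb{R}$, isometry of norms implies isomorphism of composition algebras. That is exactly the cited Theorem~2.6.2 together with Exercise~2.6.1A.
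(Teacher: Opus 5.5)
Your argument is correct. It reaches the statement through a different key fact about composition algebras than the one the paper leans on.

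The paper gives no proof of its own. As the text just before the proposition indicates, it relies on McCrimmon's classification: a composition algebra with isotropic norm is split, and the split composition algebra of each dimension is unique. On that route the only thing to verify is isotropy, which you already did: $n(e_0+e_2)=0$ in $\mathbb{C}\{1\}$, and $\hat{\mathbb{H}}\{\pm 1\}$ contains the isotropic $\hat{\mathbb{H}}$. Your signature computation is then unnecessary.

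Your main route instead combines Sylvester's law with the theorem that composition algebras with isometric norms are isomorphic. That theorem is true and standard (see, e.g., Springer--Veldkamp's book on octonions), and it buys more: it would also decide isomorphism between non-split algebras. But it is stronger than what this statement needs. You should not simply assume it is the content of McCrimmon's Theorem~2.6.2; cite it where it is actually proved.

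Your hands-on alternative via the doubling lemma is also sound. The required elements exist: for instance, in $\mathbb{C}\{1\}$ take $u=e_2$ and $v=e_3$ or $v=e_1$ according to the sign of $\gamma$.

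One harmless slip: with $n((a,b)) = n(a) - \gamma n(b)$, the norm of $\hat{\mathbb{H}}\{\pm 1\}$ is $n_{\hat{\mathbb{H}}}(a) \mp n_{\hat{\mathbb{H}}}(b)$, not $n_{\hat{\mathbb{H}}}(a) \pm n_{\hat{\mathbb{H}}}(b)$. Since $n_{\hat{\mathbb{H}}}$ has signature $(2,2)$, the conclusion is unaffected.
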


\section{Zero divisors with restrictions on alternativity}

\subsection{Octonionic subalgebras} \label{subsection:octonionic-subalgebras}

Similarly to~\cite{moreno, moreno_alternative, moreno_constructing}, we denote $\til{e}_0 = (0,e_0) \in \A_n$ and $\til{a} = a \til{e}_0$ for all $a \in \A_n$.

\begin{proposition}
Let $a = (a_1,a_2) \in \A_n$. Then $\til{a} = (\gamma_{n-1}a_2,a_1)$ and $\til{\til{a}} = \gamma_{n-1}a$.
\end{proposition}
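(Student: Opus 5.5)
Both identities follow directly from the multiplication rule in Definition~\ref{definition:Cayley--Dickson-algebras}, applied in $\A_n = \A_{n-1}\{\gamma_{n-1}\}$. We write $\til{e}_0 = (0, e_0)$, where $e_0 = 1$ is the unit of $\A_{n-1}$, so the product rule
$$
(a,b)(c,d) = (ac + \gamma \bar{d} b,\ da + b\bar{c})
$$
will be used with $\gamma = \gamma_{n-1}$, $c = 0$ and $d = 1$.

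For the first identity we substitute $a = a_1$, $b = a_2$, $c = 0$, $d = 1$. The first component becomes $a_1 \cdot 0 + \gamma_{n-1} \bar{1} a_2$. Since the involution fixes the unit, $\bar 1 = 1$, and this reduces to $\gamma_{n-1} a_2$. The second component becomes $1 \cdot a_1 + a_2 \bar{0} = a_1$. Here we use only that $\A_{n-1}$ is unital, that the involution is linear (so $\bar 0 = 0$), and that the unit is self-conjugate. Hence
$$
\til{a} = (\gamma_{n-1} a_2,\ a_1).
$$

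For the second identity we apply the first one to $\til{a}$ itself, whose components are $(\gamma_{n-1} a_2, a_1)$. This gives
$$
\til{\til{a}} = (\gamma_{n-1} a_1,\ \gamma_{n-1} a_2) = \gamma_{n-1}(a_1, a_2) = \gamma_{n-1} a,
$$
where the middle equality is the componentwise scalar multiplication from Definition~\ref{definition:Cayley--Dickson-algebras}.

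The only care needed is to read the product formula with the arguments in the right order. The term $\gamma \bar{d} b$ must become $\gamma_{n-1} a_2$, not something involving $a_1$, and $da$ must become $a_1$. No nonassociativity issues arise, because every product involved has one factor equal to $0$ or to the unit.
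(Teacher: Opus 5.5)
Your proposal is correct and is exactly the paper's argument: compute $\til{a} = (a_1,a_2)(0,e_0) = (\gamma_{n-1}a_2, a_1)$ directly from the multiplication rule, then apply this formula once more to get $\til{\til{a}} = (\gamma_{n-1}a_1, \gamma_{n-1}a_2) = \gamma_{n-1}a$. Your added remarks ($\bar{e}_0 = e_0$, $\bar{0} = 0$, and that no associativity is needed) are accurate small justifications that the paper leaves implicit.
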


\begin{proof}
By definition, $\til{a} = (a_1,a_2)(0,e_0) = (\gamma_{n-1}a_2,a_1)$. Hence $\til{\til{a}} = \til{(\gamma_{n-1}a_2,a_1)} = (\gamma_{n-1}a_1, \gamma_{n-1}a_2) = \gamma_{n-1}a$.
\end{proof}

\begin{corollary}
It follows from the explicit form of $\til{a}$ that for $e_j \in \A_n$ we have
$$
\til{e_j} = 
\begin{cases}
e_{j+2^{n-1}}, & 0 \leq j \leq 2^{n-1}-1,\\
\gamma_{n-1}e_{j-2^{n-1}}, & 2^{n-1} \leq j \leq 2^n-1.
\end{cases}
$$
\end{corollary}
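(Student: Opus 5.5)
The plan is to substitute the basis element $e_j$ into the preceding proposition, $\til{a} = (\gamma_{n-1}a_2, a_1)$, and then translate the resulting pair back into an index using Definition~\ref{definition:A_n}. Recall that $e^{(n)}_j = (e^{(n-1)}_j, 0)$ for $0 \leq j \leq 2^{n-1}-1$, and $e^{(n)}_j = (0, e^{(n-1)}_{j-2^{n-1}})$ for $2^{n-1} \leq j \leq 2^n-1$. This splits the argument into exactly the two cases of the statement.

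\emph{Case $0 \leq j \leq 2^{n-1}-1$.} Here $a_1 = e^{(n-1)}_j$ and $a_2 = 0$. The proposition gives
$$\til{e_j} = (0, e^{(n-1)}_j).$$
By Definition~\ref{definition:A_n}, $(0, e^{(n-1)}_j)$ is the basis element $e^{(n)}_{j+2^{n-1}}$, because the index $j+2^{n-1}$ lies in the range $[2^{n-1}, 2^n-1]$.

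\emph{Case $2^{n-1} \leq j \leq 2^n-1$.} Here $a_1 = 0$ and $a_2 = e^{(n-1)}_{j-2^{n-1}}$. The proposition gives
$$\til{e_j} = (\gamma_{n-1} e^{(n-1)}_{j-2^{n-1}}, 0) = \gamma_{n-1}\,(e^{(n-1)}_{j-2^{n-1}}, 0).$$
The last equality uses componentwise scalar multiplication. Since $j - 2^{n-1} \in [0, 2^{n-1}-1]$, the pair $(e^{(n-1)}_{j-2^{n-1}}, 0)$ is $e^{(n)}_{j-2^{n-1}}$.

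There is no real obstacle. The only point needing care is checking that each shifted index lands in the correct half of the range, so that the identification with the basis of $\A_n$ is legitimate.
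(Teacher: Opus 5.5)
Your proposal is correct and is essentially the paper's own argument. Like the paper, you substitute $e_j = (e^{(n-1)}_j, 0)$ or $e_j = (0, e^{(n-1)}_{j-2^{n-1}})$ into $\til{a} = (\gamma_{n-1}a_2, a_1)$ and then identify the resulting pair with a basis element of $\A_n$ via Definition~\ref{definition:A_n}.
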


\begin{definition}[{\cite[p. 15]{moreno_alternative}}]
The element $a \in \A_n$ {\em alternates strongly} with $b \in \A_n$ if $[a,a,b] = 0$ and $[b,b,a] = 0$.
\end{definition}

We denote $\Lin(x_1, \dots, x_k) = \mathbb{R} x_1 + \dots + \mathbb{R} x_k$. In Lemma~\ref{lemma:octonionic-subalgebra} we allow $n(a)$ and $n(b)$ to be equal to zero, in contrast to the usual definition of Cayley--Dickson algebras. We remark also that $\phi_{a,b}$ in Lemma~\ref{lemma:octonionic-subalgebra} might have nontrivial kernel, for example, if $ab = 0$ or $\til{a} = a$.

\begin{lemma}[{\cite[Lemma~5.8]{our_orthographs1}}]
\label{lemma:octonionic-subalgebra}
Let $a,b \in \A_n$ be doubly pure, $b \perp \Lin(e_0,a,\til{e}_0,\til{a})$. Let also $a$ alternate strongly with $b$. We denote $\mathbb{O}_{a,b} = \Lin(e_0,a,b,ab,\til{e}_0,\til{a},\til{b},\til{ab})$. Then there exists a surjective homomorphism $\phi_{a,b}: \A_3 \{ -n(a), -n(b), \gamma_{n-1}\} \to \mathbb{O}_{a,b}$, so $\mathbb{O}_{a,b}$ is alternative. The multiplication table in $\mathbb{O}_{a,b}$ is given by Table~\ref{table:octonionic-subalgebra}.
\end{lemma}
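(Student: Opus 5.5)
The strategy is to define $\phi_{a,b}$ on the standard basis of $\A_3\{-n(a),-n(b),\gamma_{n-1}\}$ and then check that it respects products. The assignment is
$$e_0\mapsto e_0,\quad e_1\mapsto a,\quad e_2\mapsto b,\quad e_3\mapsto ab,\quad e_4\mapsto \til{e}_0,\quad e_5\mapsto\til{a},\quad e_6\mapsto\til{b},\quad e_7\mapsto\til{ab},$$
extended linearly. Surjectivity onto $\mathbb{O}_{a,b}$ is then immediate from the definition of $\mathbb{O}_{a,b}$. Alternativity of $\mathbb{O}_{a,b}$ follows because it is a homomorphic image of an alternative algebra: $\A_3$ is alternative even with arbitrary, possibly zero, parameters, since the Cayley--Dickson doubling of an associative algebra with regular involution is alternative. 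This holds for $\gamma=0$ as well; the standard identities do not use invertibility of the parameters.

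So everything reduces to the multiplicativity $\phi(xy)=\phi(x)\phi(y)$ on basis elements, i.e.\ to verifying the multiplication table of $e_0,a,b,ab,\til e_0,\til a,\til b,\til{ab}$. I would do this in two layers.

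\textbf{First layer: the quaternionic part $\Lin(e_0,a,b,ab)$.} Since $a,b$ are pure with $\langle a,b\rangle=0$, we get $a^2=-n(a)$, $b^2=-n(b)$ and $ab=-ba$, using Proposition~\ref{proposition:lambda-form}. Strong alternation gives $a(ab)=a^2b=-n(a)b$ and $(ab)b=-n(b)a$. Flexibility lets us move between $(ab)a$ and $a(ba)$. These relations reproduce the table of $\A_2\{-n(a),-n(b)\}$.

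\textbf{Second layer: the tilde part.} Here I would use the explicit formula $\til{x}=(\gamma_{n-1}x_2,x_1)$ together with the Cayley--Dickson product formula to derive general identities. For $x,y$ doubly pure with suitable orthogonality, the expected identities are
$$x\til{y}=-\til{(xy)},\qquad \til{x}y=\til{(\bar{y}x)},\qquad \til{x}\,\til{y}=\gamma_{n-1}\,\bar{y}x,$$
up to conjugation signs, which I would fix by computing in components. Doubly pure means $\bar{x}_i=-x_i$, which simplifies $\bar{d}$ terms in the product formula.

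Applied to $x,y\in\{a,b,ab\}$, these identities reduce every product involving tildes to a product in the quaternionic part computed in the first layer. Thus they yield exactly the doubling table of $\A_3$ with third parameter $\gamma_{n-1}$.

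The hypothesis $b\perp\Lin(e_0,a,\til e_0,\til a)$ is what makes $ab$, and hence $\til{ab}$, doubly pure and orthogonal to the other basis images. That in turn is what validates the identities above in every case.

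\textbf{Main obstacle.} I expect the difficulty to be the tilde identities in the case where the first factor is $ab$. Expanding $(ab)\til{b}$ componentwise produces terms whose cancellation requires not only strong alternation of $a$ with $b$ but also the analogous relations between the components $a_1,a_2,b_1,b_2$ in $\A_{n-1}$. To handle this, I would first try to express everything through associators and use the $\til{\ }$-analogues of flexibility. Specifically, I would show that $[x,y,\til e_0]$ vanishes under the orthogonality hypotheses, so that $x(y\til e_0)=(xy)\til e_0$, which gives $x\til y=\til{xy}$ up to sign. I would also expect to use $[a,b,\til e_0]=-[\til e_0,b,a]$ from flexibility.

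Once the table is established, consistency with Table~\ref{table:octonionic-subalgebra} is a direct comparison.
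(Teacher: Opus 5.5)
\textbf{Verdict: there is a genuine gap.} Your skeleton is sound: the basis assignment, alternativity of $\A_3$ with possibly zero parameters (the identities are polynomial in the parameters), and the quaternionic layer via strong alternation plus flexibility. For the $(ab)(ab)$ entry you also need $n(ab)=n(a)n(b)$, e.g.\ from $\langle ab,ab\rangle=\langle a,(ab)\bar b\rangle=n(a)n(b)$.

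The gap is in the tilde layer, which is the real content of the lemma, and the mechanism you propose there is false. If $[x,y,\til{e}_0]=0$ held, it would give $a\til{b}=(ab)\til{e}_0=\til{ab}$. The table you are proving says $a\til{b}=-\til{ab}$, and no sign convention reconciles the two. Already in $\mathbb{O}$, with $a=e_1$, $b=e_2$, $\til{e}_0=e_4$ (all hypotheses hold), one gets $[e_1,e_2,e_4]=2e_7$; the correct relation is $a(b\til{e}_0)=(ba)\til{e}_0$. Your second identity also has the wrong sign: it should be $\til{x}y=\til{(x\bar y)}$, since $\til{(\bar y x)}$ would give $\til{a}b=\til{ab}$ instead of $-\til{ab}$. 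Finally, the obstacle you anticipate (component-level alternation relations for $(ab)\til{b}$) is a red herring. Strong alternation is used only in the quaternionic layer.

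The missing idea is to expand once, for generic doubly pure $x=(x_1,x_2)$ and $y=(y_1,y_2)$, using $\til{y}=(\gamma_{n-1}y_2,y_1)$ and $uv+vu=-2\langle u,v\rangle$ for pure $u,v\in\A_{n-1}$. This gives
\[
x\til{y}=\til{(yx)}+2\langle \til{x},y\rangle e_0,\qquad \til{x}\,y=\til{(x\bar y)},\qquad \til{x}\,\til{y}=\gamma_{n-1}\bar y x-2\langle\til{x},y\rangle\til{e}_0,
\]
where $\langle\til{x},y\rangle=\gamma_{n-1}(\langle x_2,y_1\rangle-\langle x_1,y_2\rangle)=-\langle\til{y},x\rangle$. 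So what you need is exactly:
\begin{itemize}
\item[(i)] $ab$ is doubly pure;
\item[(ii)] $\til{x}\perp y$ for all $x,y\in\{a,b,ab\}$. This is automatic for $x=y$ and antisymmetric in $x,y$.
\end{itemize}
You only asserted these; they must be derived. Use $\langle xy,z\rangle=\langle y,\bar xz\rangle=\langle x,z\bar y\rangle$, which holds because real parts of associators vanish in every Cayley--Dickson algebra. Combine it with the special cases $\bar a\til{a}=n(a)\til{e}_0$ and $\til{b}\bar b=-n(b)\til{e}_0$ of the identities above. This gives
\[
\langle ab,\til{e}_0\rangle=-\langle b,\til{a}\rangle=0,\qquad \langle ab,\til{a}\rangle=n(a)\langle b,\til{e}_0\rangle=0,\qquad \langle ab,\til{b}\rangle=-n(b)\langle a,\til{e}_0\rangle=0.
\]
With these in hand, every tilde entry of Table~\ref{table:octonionic-subalgebra} reduces to a quaternionic product from your first layer, without ever expanding $ab$ in components. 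For example, $(ab)\til{b}=\til{(b(ab))}=n(b)\til{a}$.
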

\begin{table}[H]
\centering
$
\begin{array}{|c|cccccccc|}
\hline
\times          & e_0             & a                     & b                     & ab                          & \til{e}_0  & \til{a}          & \til{b}          & \til{ab}               \\\hline
e_0             & e_0             & a                     & b                     & ab                          & \til{e}_0  & \til{a}          & \til{b}          & \til{ab}               \\
a               & a               & -\mu_1                & ab                    & -\mu_1 b                    & \til{a}    & -\mu_1 \til{e}_0 & -\til{ab}        & \mu_1 \til{b}          \\
b               & b               & -ab                   & -\mu_2                & \mu_2 a                     & \til{b}    & \til{ab}         & -\mu_2 \til{e}_0 & -\mu_2 \til{a}         \\
ab              & ab              & \mu_1 b               & \hphantom{.} -\mu_2 a \hphantom{.} & -\mu_1 \mu_2                & \til{ab}   & -\mu_1 \til{b}   & \mu_2 \til{a}    & -\mu_1 \mu_2 \til{e}_0 \\
\til{e}_0 & \til{e}_0 & -\til{a}        & -\til{b}        & -\til{ab}             & -\mu_3           & \mu_3 a                & \mu_3 b                & \mu_3 ab                     \\
\til{a}   & \til{a}   & \mu_1 \til{e}_0 & -\til{ab}       & \mu_1 \til{b}         & -\mu_3 a         & -\mu_1 \mu_3           & \hphantom{.} -\mu_3 ab \hphantom{.} & \mu_1 \mu_3 b                \\
\til{b}   & \til{b}   & \til{ab}        & \mu_2 \til{e}_0 & -\mu_2 \til{a}        & -\mu_3 b         & \mu_3 ab               & -\mu_2 \mu_3           & -\mu_2 \mu_3 a               \\
\hphantom{.} \til{ab} \hphantom{.} & \hphantom{.} \til{ab} \hphantom{.} & \hphantom{.} -\mu_1 \til{b} \hphantom{.} & \mu_2 \til{a}   & \hphantom{.} \mu_1 \mu_2 \til{e}_0 \hphantom{.} & \hphantom{.} -\mu_3 ab \hphantom{.} & \hphantom{.} -\mu_1 \mu_3 b \hphantom{.} & \mu_2 \mu_3 a          & \hphantom{.} -\mu_1 \mu_2 \mu_3 \hphantom{.} \\\hline
\end{array}
$
\caption{\label{table:octonionic-subalgebra} Multiplication table in $\mathbb{O}_{a,b}$.}
\end{table}

\subsection{Hexagons in orthogonality graphs}

\begin{notation} \label{notation:norm-condition}
We say that $(a,b) \in \A_{n+1}$ satisfies condition~\eqref{equation:norm-condition} if
\begin{equation*}
\begin{cases}
(n(a))^2 = (n(b))^2 \neq 0;\\
\chi := \gamma_n \dfrac{n(a)}{n(b)} = \gamma_n \dfrac{n(b)}{n(a)} = \pm 1.
\end{cases}
\tag{\textasteriskcentered} \label{equation:norm-condition}
\end{equation*}
\end{notation}

\begin{proposition} \label{proposition:self-orthogonality}
If $(a,b) \in \A_{n+1}$ is pure and satisfies condition~\eqref{equation:norm-condition} with $\chi = 1$, then $(a,b)$ is orthogonal to itself.
\end{proposition}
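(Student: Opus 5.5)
Since $(a,b)$ is orthogonal to itself exactly when $(a,b)^2=0$, I will simply compute the square using the Cayley--Dickson multiplication rule of Definition~\ref{definition:Cayley--Dickson-algebras}. Write $x=(a,b)\in\A_{n+1}$. Then
$$
x^2=(a,b)(a,b)=(a^2+\gamma_n \bar b b,\; ba+b\bar a).
$$
Purity of $x$ means $\Re(x)=\Re(a)=0$, so $\bar a=-a$. The second component is therefore $ba-ba=0$, and this does not depend on $b$ being pure.

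For the first component, $\bar a=-a$ gives $a^2=-a\bar a=-n(a)$. Also $\bar b b=n(b)$, because the involution is regular, so $\bar b b=b\bar b\in\mathbb{R}$. Hence the first component equals $-n(a)+\gamma_n n(b)$, viewed as a real multiple of $e_0$.

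It remains to use condition~\eqref{equation:norm-condition} with $\chi=1$. The identity $\gamma_n n(b)/n(a)=1$ together with $n(a)\neq 0$ gives $\gamma_n n(b)=n(a)$. So the first component vanishes, $x^2=0$, and $x$ is orthogonal to itself.

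There is no real obstacle in this argument. The only point needing care is the convention for norms: we use $n(b)=\bar b b$ from the regularity of the involution, rather than the recursive norm formula.
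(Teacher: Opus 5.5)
Your proof is correct and is essentially the paper's argument. The paper computes $n((a,b)) = n(a) - \gamma_n n(b) = 0$ with the recursive norm formula and then uses $x^2 = -x\bar{x} = -n(x)$ for pure $x$, whereas you expand $(a,b)(a,b)$ directly with the multiplication rule, which is the same computation one level down, since the recursive norm formula is itself obtained from that rule.
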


\begin{proof}
By definition, $n((a,b)) = n(a) - \gamma_n n(b) = \gamma_n n(b) - \gamma_n n(b) = 0$, so $(a,b)(a,b) = -(a,b)\overline{(a,b)} = -n((a,b)) = 0$.
\end{proof}

\begin{theorem}[{\cite[Remark~3.6, Corollary~3.7, Proposition~3.8]{our_orthographs1}}] \label{theorem:A_n-double-hexagon}
Let $a,b \in \A_n$ alternate strongly with $c,d \in \A_n$, $(a,b)(c,d) = 0$ in $\A_{n+1}$.
\begin{enumerate}
    \item Let $(c,d)$ satisfy condition~\eqref{equation:norm-condition}, and $(n(a))^2 + (n(b))^2 \neq 0$. Then $(a,b)$ and $(\overline{ac},-\chi da)$ also satisfy condition~\eqref{equation:norm-condition} with the same value of~$\chi$.
    \item Let $(a,b)$ and $(c,d)$ satisfy condition~\eqref{equation:norm-condition}. Then there exists the following $6$-cycle in $\Gamma_Z(\A_{n+1})$:
    $$
    (a,b) \rightarrow (c,d) \rightarrow (\overline{ac},-\chi da) \rightarrow (a,-b) \rightarrow (c,-d) \rightarrow (\overline{ac}, \chi da) \rightarrow (a,b).
    $$
    \item The equalities $(\overline{ac},-\chi da) = -\gamma_n(\bar{b}d, -\chi \gamma_n b\bar{c})$ and $(\overline{ac},\chi da) = -\gamma_n(\bar{b}d, \chi \gamma_n b\bar{c})$ hold.
\end{enumerate}
\end{theorem}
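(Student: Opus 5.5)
The plan is to work directly from the two componentwise equations encoded in $(a,b)(c,d)=0$, namely
$$
ac = -\gamma_n \bar d b, \qquad da = -b\bar c,
$$
and to use strong alternation only through identities of the form $x(\bar x y) = n(x) y$ and $(y\bar x)x = n(x)y$. For instance, $[a,a,y]=0$ gives $a(\bar a y) = 2\Re(a)\,ay - a(ay) = 2\Re(a)\,ay-(aa)y = (a\bar a)y = n(a)y$, since $\bar a = 2\Re(a) - a$. Symmetric versions hold for right multiplication and for conjugates, because $\bar x$ also alternates with whatever $x$ alternates with. Part~(3) should be disposed of first, since it is immediate. Conjugating $ac=-\gamma_n\bar d b$ gives $\overline{ac} = -\gamma_n \bar b d$. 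The relation $da = -b\bar c$ gives $\mp\chi da = \pm\chi b\bar c = -\gamma_n(\mp\chi\gamma_n b\bar c)$, using $\gamma_n^2=1$. This yields both stated equalities.

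For part~(1), I would first derive the norm relations
$$
n(a)n(c)=n(b)n(d), \qquad n(a)n(d)=n(b)n(c).
$$
One route is multiplicativity of the norm on strongly alternating pairs. A cleaner route is to multiply the defining equations by suitable conjugates and apply the identities above. For example, $\bar a(ac) = n(a)c$ and $(\overline{ac})(ac)=n(ac)$ should, via $ac=-\gamma_n\bar d b$, reduce to $n(a)n(c)=n(d)n(b)$.

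Given these relations, the case analysis is routine. If $n(c)=n(d)\neq 0$, then $n(a)=n(b)$; if $n(c)=-n(d)$, then $n(a)=-n(b)$. Moreover $n(b)=0$ would force $n(a)n(c)=0$, hence $n(a)=0$, contradicting $(n(a))^2+(n(b))^2\neq0$. So $(a,b)$ satisfies condition~\eqref{equation:norm-condition} with $\gamma_n n(a)/n(b) = \gamma_n n(c)/n(d) = \chi$. For the element $(\overline{ac},-\chi da)$, the same norm computations give $n(\overline{ac}) = n(a)n(c)$ and $n(da)=n(d)n(a)$. Their ratio is $n(c)/n(d)$, and both are nonzero, so the same $\chi$ appears.

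For part~(2), I would verify the six arrows by direct multiplication via Definition~\ref{definition:Cayley--Dickson-algebras}. First, $(c,d)(\overline{ac},-\chi da) = (c\,\bar c\bar a - \gamma_n\chi\,(\bar a\bar d)d,\; -\chi(da)c + d(ac))$. The first component is $n(c)\bar a - \gamma_n\chi n(d)\bar a = 0$, because $\gamma_n\chi n(d) = n(c)$. In the second component, substitute $da=-b\bar c$ and $ac=-\gamma_n\bar d b$ to get $\chi n(c) b - \gamma_n n(d) b = 0$. The arrow $(\overline{ac},-\chi da)\to(a,-b)$ is handled similarly, using the part~(3) form $-\gamma_n(\bar b d,-\chi\gamma_n b\bar c)$ where convenient. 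The remaining three arrows follow by the symmetry $(b,d)\mapsto(-b,-d)$, which preserves $(a,b)(c,d)=0$ and condition~\eqref{equation:norm-condition}, together with the observation that the cycle has period three up to this sign change.

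The main obstacle is the justification of each cancellation such as $(\bar a\bar d)d = n(d)\bar a$ and $(b\bar c)c = n(c)b$. The hypothesis must supply exactly the needed strong alternation, for instance of $d$ with $a$ and of $c$ with $b$. I would check carefully which pairs the hypothesis ``$a,b$ alternate strongly with $c,d$'' covers, and apply flexibility to move between left and right versions of the identities.
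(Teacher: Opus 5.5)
The paper does not prove this statement; it imports it from \cite{our_orthographs1}. Your direct componentwise verification is the natural argument, and the computations are correct.

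I checked the arrow you left as ``similar''. In $(\overline{ac},-\chi da)(a,-b)$ the first component is $(\bar c\bar a)a+\gamma_n\chi\,\bar b(da)=n(a)\bar c-\gamma_n\chi\,n(b)\bar c=0$. The second component is $-b(\bar c\bar a)-\chi(da)\bar a$. Here you really do need the part~(3) rewriting $\bar c\bar a=-\gamma_n\bar b d$, since nothing in the hypothesis controls $[b,\bar c,\bar a]$; with it you get $\gamma_n n(b)d-\chi\,n(a)d=0$.

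Every cancellation in arrows two and three uses only $[x,x,y]=0$ for $\{x,y\}$ one of the pairs $\{a,c\},\{a,d\},\{b,c\},\{b,d\}$, in either order, together with flexibility. So the hypothesis is exactly what is needed. Arrow two uses $\chi=\gamma_n n(c)/n(d)$, while arrow three uses $\chi=\gamma_n n(a)/n(b)$, so part~(1) must be proved before part~(2). The sign symmetry $(b,d)\mapsto(-b,-d)$ then gives arrows four to six, as you say.

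The step you have not actually secured is the set of norm relations $n(a)n(c)=n(b)n(d)$ and $n(a)n(d)=n(b)n(c)$, together with $n(\overline{ac})=n(a)n(c)$ and $n(da)=n(d)n(a)$. Your ``cleaner route'' does not close. Multiplying the defining equations by conjugates only yields vector identities such as $\bar a(ac)=n(a)c$. Substituting $ac=-\gamma_n\bar d b$ into $(\overline{ac})(ac)$ merely gives $n(ac)=n(\bar d b)$, which is a tautology. Extracting $n(a)n(c)$ from $(\bar c\bar a)(ac)$ would require a reassociation that the element identities you listed do not supply.

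So you need multiplicativity of the norm on strongly alternating pairs, and since $\A_n$ is not a composition algebra for $n\geq 4$, this has to be justified. It follows from the adjoint identity $\langle xy,z\rangle=\langle y,\bar x z\rangle$. This holds in every Cayley--Dickson algebra, by a short induction on the doubling using $\langle (x_1,x_2),(y_1,y_2)\rangle=\langle x_1,y_1\rangle-\gamma_n\langle x_2,y_2\rangle$. If $[x,x,y]=0$, then $n(xy)=\langle y,\bar x(xy)\rangle=n(x)n(y)$. With this lemma in place your proof is complete.

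Neither your argument nor the statement ensures that the six lines are distinct. For $(a,b)=(c,d)=(e_1,e_1)$ in $\hat{\mathbb{S}}$ all hypotheses hold, but the list collapses to four vertices. What is really proved is that the six products vanish.
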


\begin{descript}
By using Theorem~\ref{theorem:A_n-double-hexagon} we obtain a subgraph of $\Gamma_Z(\A_{n+1})$ which we call a {\em hexagon}. It is depicted in Fig.~\ref{figure:directed-hexagon}.
\end{descript}

\begin{figure}[H]
\centering
\includegraphics[width=0.54\linewidth]{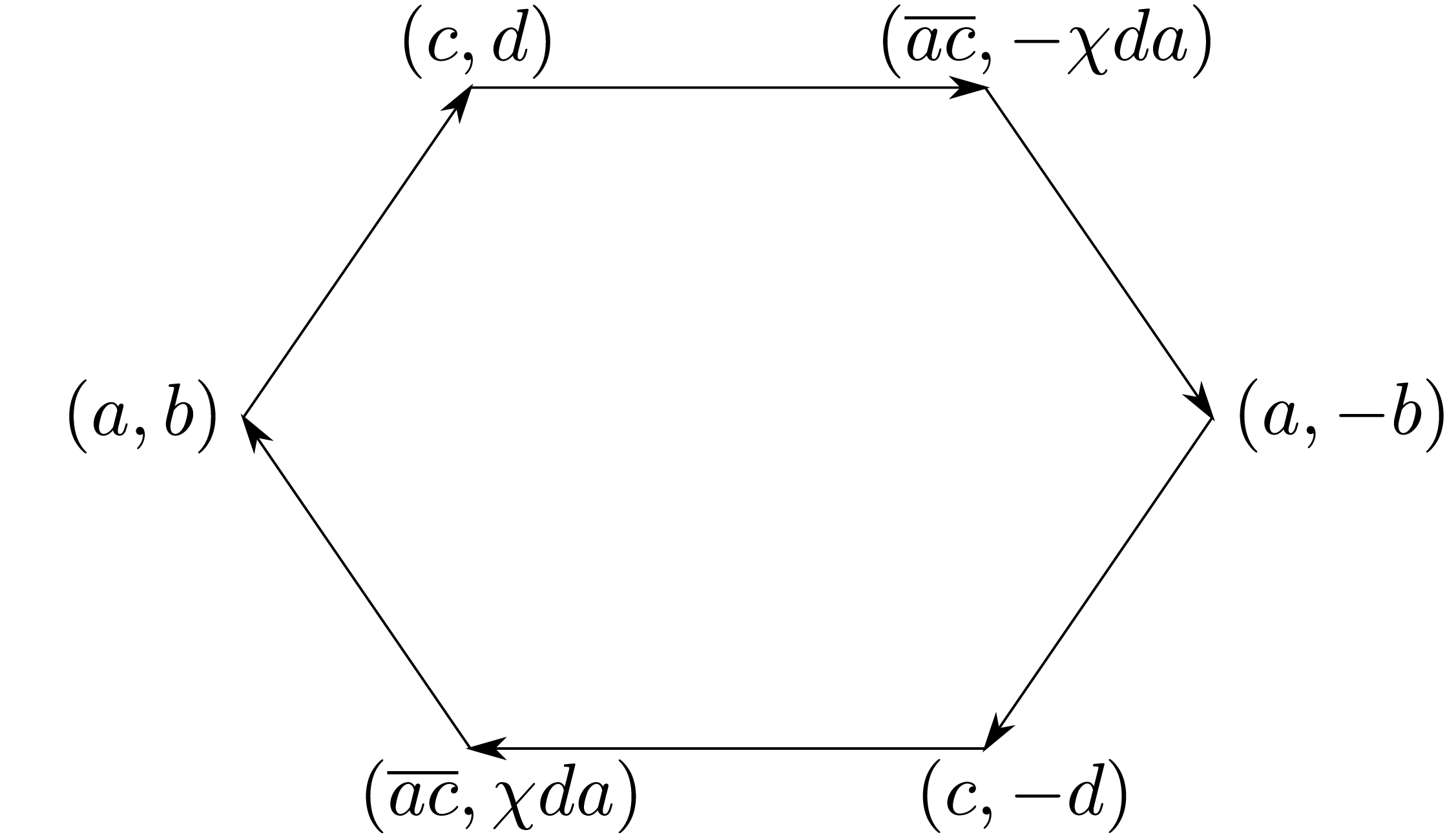}
\caption{\label{figure:directed-hexagon} A hexagon.}
\end{figure}

The following proposition establishes a relation between orthogonality graphs and zero divisor graphs of real Cayley--Dickson algebras.

\begin{proposition}[{\cite[Proposition~3.10]{our_orthographs1}}] \label{proposition:orthogonality-condition}
Let $([a], [b])$ be an edge in $\Gamma_Z(\A_n)$. Then $([a], [b])$ is also an edge in $\Gamma_O(\A_n)$ if and only if one of the following conditions holds:
\begin{enumerate}
\item $[b] = [\bar{a}]$ and $n(a) = 0$;
\item $\Re(a) = \Re(b) = 0$.
\end{enumerate}
\end{proposition}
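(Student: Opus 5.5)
The idea is to compare $ab$ and $ba$ directly by splitting each element into its real and imaginary parts. The only extra input needed is the polarization identity of Proposition~\ref{proposition:lambda-form}. Write $a = \alpha + a'$ and $b = \beta + b'$ with $\alpha,\beta \in \mathbb{R}$ and $a', b' \in \A'_n$.

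First compute $ab + ba$. Since $\bar{a}' = -a'$ and $\bar{b}' = -b'$, Proposition~\ref{proposition:lambda-form} gives
$$
a'b' + b'a' = -(a'\bar{b}' + b'\bar{a}') = -2\langle a', b'\rangle \in \mathbb{R}.
$$
Hence
$$
ab + ba = 2\alpha\beta - 2\langle a',b'\rangle + 2\alpha b' + 2\beta a'.
$$
Next, the real part of $ab$ is $\Re(ab) = \alpha\beta + \Re(a'b') = \alpha\beta - \langle a',b'\rangle$. Now assume $ab = 0$. Then $\Re(ab) = 0$, so the real terms above cancel and we obtain
$$
ba = 2(\alpha b' + \beta a').
$$
Therefore, given that $([a],[b])$ is an edge of $\Gamma_Z(\A_n)$, it is an edge of $\Gamma_O(\A_n)$ if and only if $\alpha b' + \beta a' = 0$.

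It remains to analyse the equation $\alpha b' + \beta a' = 0$.

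If $\alpha = \beta = 0$, the equation holds trivially; this is condition~(2).

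If $\alpha = 0$ but $\beta \neq 0$, the equation forces $a' = 0$, so $a = 0$. This contradicts $[a]$ being a vertex. Symmetrically, if $\alpha \neq 0$ but $\beta = 0$, then $b' = 0$ and $b = 0$, again a contradiction.

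Now let $\alpha, \beta \neq 0$. Then $b' = -(\beta/\alpha)a'$, so
$$
b = \frac{\beta}{\alpha}(\alpha - a') = \frac{\beta}{\alpha}\,\bar{a},
$$
that is, $[b] = [\bar{a}]$. Moreover $0 = ab = \frac{\beta}{\alpha}\, a\bar{a} = \frac{\beta}{\alpha}\, n(a)$, which forces $n(a) = 0$. This is condition~(1).

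For the converse, each condition must give $ba = 0$. Under condition~(2) we have $\alpha = \beta = 0$, so $\alpha b' + \beta a' = 0$ and the reduction above applies. Under condition~(1), $ba$ is a real multiple of $\bar{a}a = n(a) = 0$, so $ba = 0$.

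No step here is a genuine obstacle. The only point needing care is the observation that the vanishing of $\Re(ab)$ exactly cancels the real part of $ab + ba$, and this follows immediately from the bilinear-form identity.
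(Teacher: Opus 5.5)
Your proof is correct and complete. The case analysis is sound, including the degenerate cases $\alpha=0\neq\beta$ and $\alpha\neq 0=\beta$, which you rightly exclude because vertices are nonzero. There is nothing in this paper to compare it with: the statement is quoted from Part~I \cite{our_orthographs1} and no proof is reproduced here. Your argument works as a self-contained derivation, and its real content is the identity
\[
ab=0 \;\Longrightarrow\; ba = 2\bigl(\Re(a)\Im(b)+\Re(b)\Im(a)\bigr).
\]

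One step needs an explicit justification. The equality $\Re(a'b')=-\langle a',b'\rangle$ does not follow from Proposition~\ref{proposition:lambda-form} alone. It uses the fact that the involution is an anti-automorphism, $\overline{a'b'}=\bar{b}'\,\bar{a}'=b'a'$. This is part of what ``involution'' means, and the Cayley--Dickson doubling preserves it. Once you invoke that fact, the bilinear form is no longer needed. From $0=\overline{ab}=\bar b\,\bar a$ and $\bar a=2\alpha-a$, $\bar b=2\beta-b$ you get
\[
0=(2\beta-b)(2\alpha-a)=4\alpha\beta-2\beta a-2\alpha b+ba,
\]
which is exactly $ba=2(\alpha b'+\beta a')$ in one line.
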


\begin{corollary}[{\cite[Corollary~3.16]{our_orthographs1}}]
Let $a,b,c,d,ac,ad \in \A_n$ alternate strongly pairwise, $(a,b)(c,d) = 0$ in $\A_{n+1}$, $(a,b)$ and $(c,d)$ satisfy condition~\eqref{equation:norm-condition}. Let also $\Re(a) = \Re(c) = \Re(ac) = 0$. Then the hexagon in Fig.~\ref{figure:directed-hexagon} is an undirected hexagon in $\Gamma_O(\A_{n+1})$, and there are no other edges in $\Gamma_O(\A_{n+1})$ which connect its vertices, that is, there are no chords in it.
\end{corollary}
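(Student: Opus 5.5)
The plan is to handle the two assertions separately: first that every directed edge of the hexagon is an edge of $\Gamma_O(\A_{n+1})$, and then that no other pair of its vertices is orthogonal. Denote the vertices by $v_1=(a,b)$, $v_2=(c,d)$, $v_3=(\overline{ac},-\chi da)$, $v_4=(a,-b)$, $v_5=(c,-d)$, $v_6=(\overline{ac},\chi da)$.

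\emph{Edges.} By Theorem~\ref{theorem:A_n-double-hexagon}(2), $v_1\to v_2\to\dots\to v_6\to v_1$ is a directed $6$-cycle in $\Gamma_Z(\A_{n+1})$. By Theorem~\ref{theorem:A_n-double-hexagon}(1), all six vertices satisfy condition~\eqref{equation:norm-condition}, so their components have nonzero norms and the vertices are nonzero lines. To upgrade each edge, I will apply Proposition~\ref{proposition:orthogonality-condition}(2). This only requires that the real parts vanish: $\Re(v_1)=\Re(v_4)=\Re(a)=0$, $\Re(v_2)=\Re(v_5)=\Re(c)=0$, and $\Re(v_3)=\Re(v_6)=\Re(\overline{ac})=\Re(ac)=0$. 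Hence all six edges are undirected edges of $\Gamma_O(\A_{n+1})$.

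\emph{Distinctness and reduction by symmetry.} I will first check that the six lines are pairwise distinct. Since $a\ne0$ and $c\ne0$, the first components $a,c,\overline{ac}$ are nonzero. Where first components could be proportional, the second components differ in sign, so the lines are distinct.

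For the chords it suffices to show $v_iv_j\neq0$ or $v_jv_i\neq0$ for each nonadjacent pair: the opposite pairs $\{v_i,v_{i+3}\}$ and the pairs at distance two. The hexagon has a symmetry that cuts down the work. Theorem~\ref{theorem:A_n-double-hexagon} applies equally to the edge $v_2\to v_3$, since the pairwise strong alternation hypothesis covers $c,d,\overline{ac},da$ up to the identities in Theorem~\ref{theorem:A_n-double-hexagon}(3). Therefore the hexagon generated from $v_2$ is the same cycle shifted by one. Together with the sign-flip $v_i\leftrightarrow v_{i+3}$, this should reduce the verification to the pairs $\{v_1,v_4\}$, $\{v_1,v_3\}$, $\{v_1,v_5\}$. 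If this symmetry argument turns out to be delicate, I will instead do all nine pairs by the same computation.

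\emph{Computations.} Each product is expanded with $(x,y)(z,w)=(xz+\gamma_n\bar w y,\,wx+y\bar z)$. The simplification uses strong alternation, for instance $a(ac)=a^2c=-n(a)c$, $\bar x(xy)=n(x)y$, and $\bar a=-a$.
\begin{itemize}
\item For the opposite pair, $v_1v_4=(-n(a)+\gamma_n n(b),\,-2ba)$. If $\chi=-1$, the first component equals $-2n(a)\neq0$. If $\chi=1$, the first component vanishes, but $\bar b(ba)=n(b)a\neq0$ forces $ba\neq0$.
\item For $v_1v_5$, the computation is the same as $v_1v_2=0$ with $d$ replaced by $-d$. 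The difference from zero is $(-2\gamma_n\bar d b,\,-2da)$, and $da\neq0$ by the same norm argument.
\item For $v_1v_3$, expanding and using $a\overline{ac}=n(a)c$ leaves a component of the form $n(a)c\pm(\dots)$. I will rewrite it via $v_1v_2=0$, i.e.\ $ac=-\gamma_n\bar d b$ and $da=-b\bar c$, to obtain a nonzero multiple of $c$ or $d$.
\end{itemize}

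\emph{Main obstacle.} The hard step is the distance-two pairs such as $v_1v_3$. Here the terms do not cancel trivially, and one must rearrange products like $b(ac)$ or $(da)a$ using only the available strong alternation relations among $a,b,c,d,ac,ad$ and the relations coming from $v_1v_2=0$. Then $n(a),n(b)\neq0$ and $\chi=\pm1$ should force a nonzero component. I expect the second components to be the cleanest route, since they are of the form $yx$ with a single nonzero norm factor.
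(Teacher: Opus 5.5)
Your edge argument is fine: all six vertices are pure, so Proposition~\ref{proposition:orthogonality-condition}(2) upgrades the directed $6$-cycle of Theorem~\ref{theorem:A_n-double-hexagon}(2) to an undirected one. That theorem already gives distinctness of the six lines. Your own distinctness sentence does not rule out $[a]=[c]$; that follows from $\langle a,c\rangle=-\Re(ac)=0$.

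The gap is in the chord part: the rotation symmetry $v_i\mapsto v_{i+1}$ is not available. To rerun your computations from $v_2$ you would need the hypotheses of the statement for the pair $(c,d)$, $(\overline{ac},-\chi da)$. That means strong alternation involving $da$ and $c(da)$, and nothing in the assumptions gives it. For instance, $da=2\Re(ad)-ad+2\Re(d)a$, so $[c,c,da]=0$ follows by linearity. But $[da,da,c]$ contains the cross term $-2\Re(d)\bigl([a,ad,c]+[ad,a,c]\bigr)$, which is not controlled, since $d$ is not assumed pure. The identities of Theorem~\ref{theorem:A_n-double-hexagon}(3) do not help either, because $b\bar c$ is not in the list. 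The only legitimate symmetry is the sign flip $\phi(x,y)=(x,-y)$, which is an automorphism of $\A_{n+1}$. After using it you still owe $\{v_2,v_5\}$, $\{v_3,v_5\}$ and $\{v_3,v_6\}$. The last pair, for $\chi=-1$, is the one genuinely nontrivial check in the corollary. It is exactly where the hypothesis that $ac$ and $ad$ alternate strongly is needed, and your proposal never uses that hypothesis.

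Here is how to close it; note that your diagnosis of the difficulty is inverted.

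\emph{Distance-two pairs.} Every distance-two pair has the form $\{v_i,\phi(v_{i+1})\}$. If $v_i=(x,y)$, $v_{i+1}=(z,w)$ and $v_iv_{i+1}=0$, then $v_i\phi(v_{i+1})=(-2\gamma_n\bar w y,\,-2wx)$. So it suffices that $da\neq0$, $b(ac)\neq0$ and $\overline{da}\,d\neq0$. These follow from $\bar d(da)=n(d)a$, $\bar b(b(ac))=n(b)\,ac$ and $(\bar a\bar d)d=n(d)\bar a$.

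\emph{Opposite pairs.} For pure $x$ one has $(x,y)(x,-y)=(-(1+\chi)n(x),\,-2yx)$. In particular, in your $v_1v_4$ the first component is $-n(a)-\gamma_n n(b)$, which vanishes for $\chi=-1$, not $\chi=1$; this slip is harmless. For $\chi=-1$ you need $ba$, $dc$ and $(da)\overline{ac}$ to be nonzero. The first two are immediate. For the third, $[ac,ac,da]=0$ by linearity from $[ac,ac,a]=[ac,ac,ad]=0$. Hence $[da,ac,ac]=0$ by flexibility, so $((da)\overline{ac})(ac)=n(ac)\,da$. This is nonzero because $da\neq0$ and $n(ac)=n(\overline{ac})\neq0$, the latter by condition~\eqref{equation:norm-condition} for $v_3$ from Theorem~\ref{theorem:A_n-double-hexagon}(1).
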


\subsection{Zero divisors of the form $\left[\left(e_i^{(n)}, \pm e_j^{(n)}\right)\right]$}

It follows from Proposition~\ref{proposition:orthogonality-condition} that any zero divisor $a \in \A_n$ with nontrivial orthogonalizer either is pure or has zero norm. If $a$ is not pure, then its connected component of $\Gamma_O(\A_n)$ consists of two vertices $[a]$ and $[\bar{a}]$. Hence, in the context of orthogonality graphs, we are interested in pure zero divisors only. Moreover, by Proposition~\ref{proposition:orthogonality-condition}, in case of pure elements, it is sufficient to verify that only one product is equal to zero.

\begin{notation}
We will need the following subsets of $Z(\A_{n+1})$:
\begin{align*}
Z_e(\A_{n+1}) &= \left\{ \left(e_i^{(n)}, \pm e_j^{(n)}\right) \in Z(\A_{n+1}) \right\} = (\E_n \times (\pm \E_n)) \cap Z(\A_{n+1});\\
Z_e'(\A_{n+1}) &= \left\{ \left(e_i^{(n)}, \pm e_j^{(n)}\right) \in Z(\A_{n+1}) \; \big| \; i \neq 0 \right\} = (\E'_n \times (\pm \E_n)) \cap Z(\A_{n+1}).
\end{align*}

Clearly, all elements of $Z_e'(\A_{n+1})$ are pure. Then $\Gamma_e(\A_{n+1})$ denotes a subgraph of $\Gamma_O(\A_{n+1})$ on the vertex set $P(Z_e'(\A_{n+1}))$.
\end{notation}

We could consider zero divisors of the form $\left(\pm e_i^{(n)}, \pm e_j^{(n)}\right)$, however, they would split into pairs of proportional elements, and each pair would correspond to the same vertex of $\Gamma_O(\A_{n+1})$. Therefore, we fix the sign of the first component, so that each line in $P(Z_e'(\A_{n+1}))$ is represented by exactly one element of $Z'_e(\A_{n+1})$. For simplicity of notation, below we define zero divisors of $\A_{n+1}$ up to multiplication by $\pm 1$.

Recall from~\cite{chan} that, in case of the sedenions, we have an elegant relation between the components of pairs of zero divisors. In other algebras, this relation takes a weaker form and holds for base units only, see~\cite[Example~4.3]{our_sedenions}.

\begin{lemma}[{\cite[Proposition~3.4(ii)]{chan}, \cite[Lemma~6.2]{our_orthographs1}}] \label{lemma:sedenions-zero-divisors-relation} \label{lemma:basis-pairs-zero-divisors-relation}
\begin{enumerate}
    \item Let $(a,b)(c,d)=0$ in $\mathbb{S}$, $n(a) = n(b) = n(c) = n(d) = 1$. Then $ab = -cd$.
    \item Let $(a,b), (c,d) \in Z_e(\A_{n+1})$, $(a,b)(c,d) = 0$. Then $ab = \pm cd$.
\end{enumerate}
\end{lemma}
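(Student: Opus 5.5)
The plan is a direct computation with the multiplication rule $(a,b)(c,d)=(ac+\gamma_n\bar d b,\ da+b\bar c)$, applied to standard basis elements. Part~(1) is the sedenion result of~\cite{chan} and is taken as known. We prove part~(2).

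Basis elements of $\A_n$ have a convenient product structure. For $e_i,e_j\in\E_n$ we have $e_ie_j=\pm e_{i\oplus j}$, where $\oplus$ is bitwise XOR on indices. This holds up to a sign, which is a product of the parameters $\gamma_k$ and signs. It follows by an easy induction on $n$ from the Cayley--Dickson formula, since each component of the product of basis elements is a single basis element up to a real scalar. Also $\bar e_i=\pm e_i$. We use this index calculus throughout.

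Write $a=e_i$, $b=\pm e_j$, $c=e_k$, $d=\pm e_l$. The product $(a,b)(c,d)=0$ gives two equations.

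The first component gives $e_ie_k=-\gamma_n\bar d b$. Up to sign this is $e_{i\oplus k}=\pm e_{l\oplus j}$. Hence $i\oplus k=j\oplus l$, since both sides are nonzero scalar multiples of basis elements.

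The second component gives $da=-b\bar c$, so $l\oplus i=j\oplus k$. This is consistent with the first relation, because XOR is associative and commutative.

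From $i\oplus k=j\oplus l$ we get $i\oplus j=k\oplus l$. Therefore $ab$ and $cd$ are both nonzero multiples of the same basis element $e_{i\oplus j}$. Their coefficients are products of $\pm1$ and the $\gamma$'s, which we assume lie in $\{\pm1\}$. So $ab=\pm\lambda cd$ with $\lambda=\pm1$, that is, $ab=\pm cd$.

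Any care needed is only in justifying the XOR rule with the signs $\gamma_k=\pm1$. We do this by induction, using $(x,0)(y,0)=(xy,0)$, $(x,0)(0,y)=(0,yx)$, $(0,x)(y,0)=(0,x\bar y)$ and $(0,x)(0,y)=(\gamma\bar y x,0)$. The zero-divisor hypothesis is used only through the index identity. No alternativity is needed, because each equation involves a product of just two basis elements.
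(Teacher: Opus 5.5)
Your proof of (2) is correct. With the paper's normalization $\gamma_k\in\{\pm1\}$, the four product rules you list give $e_ie_j=\pm e_{i\oplus j}$ and $\bar e_i=\pm e_i$ by induction. The first component $ac=-\gamma_n\bar d b$ of the zero product then already forces $i\oplus k=j\oplus l$, hence $i\oplus j=k\oplus l$ and $ab=\pm cd$. The case where a first component equals $e_0$, which is allowed in $Z_e(\A_{n+1})$, needs no separate treatment.

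The paper does not argue this lemma itself: it quotes Chan for (1) and Lemma~6.2 of Part~I for (2). Your handling of (1) by citation therefore coincides with the paper's. For (2) you give a self-contained derivation from the $\mathbb{Z}_2^n$-graded (twisted group algebra) structure of the standard basis, instead of relying on the earlier paper.

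Your route also shows that only one of the two component equations is needed, and that neither alternativity nor two-sided orthogonality enters. This is consistent with the paper's later remark that the relation holds for pairs of zero divisors even when they are not orthogonal.
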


\begin{notation}
Let $x \in \E_n$. Then $C(x)$ denotes the subgraph of $\Gamma_e(\A_{n+1})$ on the set of vertices $(a,b)$ such that $ab = \pm x$.
\end{notation}

It follows from Lemma~\ref{lemma:basis-pairs-zero-divisors-relation} that, given $(a,b), (c,d) \in Z_e(\A_{n+1})$ which belong to the same connected component of $\Gamma_e(\A_{n+1})$, we have $ab = \pm cd$, cf.~\cite[Corollary~6.3]{our_orthographs1}.
Hence for any $x \neq y$ the subgraphs $C(x)$ and $C(y)$ are not connected to each other in $\Gamma_e(\A_{n+1})$. Moreover, we will see in Subsection~\ref{subsection:graph-structure} that for $n \geq 3$ these subgraphs are exactly the connected components of $\Gamma_e(\A_{n+1})$, except for, possibly, $C(e_0)$.

By~\cite[Lemma~4]{schafer}, all elements of $\E_n$ are always alternative in $\A_n$.
Besides, for $(a,b) \in Z_e(\A_{n+1})$ we always have $n(a) = \pm 1$ and $n(b) = \pm 1$, and thus $(a,b)$ satisfies condition~\eqref{equation:norm-condition} for $\chi = \pm 1$, see Notation~\ref{notation:norm-condition}. The values of $\chi$ can be different for different elements of $Z_e(\A_{n+1})$, so it becomes a function of a zero divisor: $\chi = \chi((a,b))$. For simplicity, we will omit the argument of $\chi$ if it is clear of the context.

As follows from Theorem~\ref{theorem:A_n-double-hexagon}, one may prove by induction on the length of path that $\chi$ is constant on any connected component of $\Gamma_e(\A_{n+1})$. The value $\chi = \chi(C)$ is called the {\em characteristic} of the connected component $C \subseteq \Gamma_e(\A_{n+1})$. Moreover, we can determine the value of $\chi$ for every vertex of $C(x)$ explicitly, even in the case when $C(x)$ is not connected.

\begin{proposition}[{\cite[Proposition~6.5]{our_orthographs1}}] \label{proposition:chi-value}
For any $x \in \E_n$ we have $\chi(C(x)) = \gamma_n n(x)$.
\end{proposition}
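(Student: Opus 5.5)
We need to prove: for $(a,b)\in Z'_e(\A_{n+1})$ lying in $C(x)$, i.e. $ab=\pm x$, we have $\chi((a,b)) = \gamma_n n(a)/n(b) = \gamma_n n(x)$.

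The plan is a direct computation of norms of standard basis elements, with no graph-theoretic argument needed. By definition, for $(a,b)$ satisfying condition~\eqref{equation:norm-condition} we have $\chi = \gamma_n n(a)/n(b)$. Since $a,b\in\E_n$ up to sign, $n(a),n(b)\in\{\pm1\}$, so $1/n(b)=n(b)$ and
$$
\chi = \gamma_n\, n(a)\, n(b).
$$
It therefore suffices to show $n(a)n(b) = n(ab)$ for standard basis elements $a,b\in\E_n$, because then $n(x)=n(\pm ab)=n(ab)$.

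The key step is the multiplicativity of the norm on pairs of basis elements. Although $\A_n$ is not a composition algebra for $n\ge4$, multiplicativity still holds on $\E_n$. I would prove this by induction on $n$ via the doubling formula $(a,b)(c,d)=(ac+\gamma\bar d b,\, da+b\bar c)$ together with $n((a,b)) = n(a)-\gamma_{n-1}n(b)$. Write basis elements of $\A_n$ as $(u,0)$ or $(0,u)$ with $u\in\E_{n-1}$, so that $n((u,0))=n(u)$ and $n((0,u))=-\gamma_{n-1}n(u)$. There are four cases, and in each the product is a single term up to sign:
\begin{itemize}
\item $(u,0)(v,0)=(uv,0)$;
\item $(u,0)(0,v)=(0,vu)$;
\item $(0,u)(v,0)=(0,u\bar v)$;
\item $(0,u)(0,v)=(\gamma\bar v u,0)$.
\end{itemize}
Using the induction hypothesis $n(uv)=n(u)n(v)$, together with $n(\bar v)=n(v)$ (since $\bar v=\pm v$) and commutativity of multiplication of basis elements up to sign, each case gives the required product of norms. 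In the last case the factor $\gamma^2=1$ appears, which reconciles with $(-\gamma n(u))(-\gamma n(v))$. This assumes $\gamma_k=\pm1$, which the paper allows. The base case $n=0$ is trivial.

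Alternatively, one could bypass the induction: $a$ alternates with $b$ (basis elements generate an alternative, in fact associative up to sign, subalgebra of dimension at most $4$ by Artin-type arguments), and norms are multiplicative in alternative subalgebras. I would nevertheless prefer the explicit induction, since it avoids needing nondegeneracy.

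The main obstacle is only bookkeeping of signs and of the factor $\gamma$ in the fourth case. Once multiplicativity is established, $\chi(C(x)) = \gamma_n n(a)n(b)=\gamma_n n(ab)=\gamma_n n(x)$ holds vertexwise, and hence for the whole of $C(x)$ whether or not it is connected.
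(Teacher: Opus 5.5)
Your proof is correct and is essentially the intended argument. For every vertex $(a,b)$ of $C(x)$ you compute $\chi((a,b))=\gamma_n n(a)/n(b)=\gamma_n n(a)n(b)=\gamma_n n(ab)=\gamma_n n(x)$, which holds vertexwise and so does not need $C(x)$ to be connected. The only difference is how you justify $n(ab)=n(a)n(b)$: your induction through the doubling formula is valid (the factor $\gamma^2$ in the fourth case cancels for any $\gamma$). The paper instead has at hand the alternativity of all elements of $\E_n$ (Schafer's Lemma~4), under which the norm is multiplicative against every $b$. That fact is also the right way to justify your aside, since Artin's theorem is not available in the non-alternative $\A_n$ for $n\ge 4$.
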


Recall that $\Main_{n+1} = \Main_n \{ -1 \}$ and $\Hyp_{n+1} = \Main_n \{ 1 \}$. Since $n(x) = 1$ for any basis element $x$ of $\Main_n$, we have $\chi((a,b)) = -1$ for all $(a,b) \in Z_e(\Main_{n+1})$ and $\chi((a,b)) = 1$ for all $(a,b) \in Z_e(\Hyp_{n+1})$.

\begin{remark} \label{remark:component-types}
We consider four groups of elements $(x,y) \in Z_e'(\A_{n+1})$, depending on the type of $xy$. In this classification we assume that $e_0,a,b,c$ are linearly independent elements in $\pm \E_{n-1}$, and $ab = \pm c$.
\begin{enumerate}[label=(\Roman*)]
    \item If $xy \in \Lin(e_0)$, then $(x,y)$ is of the form $(a, \pm a)$, $(\til{a}, \pm \til{a})$, or $(\til{e}_0, \pm \til{e}_0)$.\label{item:component-type-1}
    \item If $xy \in \Lin(\til{e}_0)$, then $(x,y)$ is of the form $(a, \pm \til{a})$, $(\til{a}, \pm a)$, or $(\til{e}_0, \pm e_0)$.\label{item:component-type-2}
    \item If $xy \in \Lin(c)$, then $(x,y)$ is of the form $(c, e_0)$, $(\til{c}, \til{e}_0)$, $(\til{e}_0, \til{c})$, $(a, b)$, or $(\til{a}, \til{b})$.\label{item:component-type-3}
    \item If $xy \in \Lin(\til{c})$, then $(x,y)$ is of the form $(\til{c}, e_0)$, $(c, \til{e}_0)$, $(\til{e}_0, c)$, $(a, \til{b})$, or $(\til{a}, b)$.\label{item:component-type-4}
\end{enumerate}
Elements of distinct types (or of the same type but with linearly independent values of~$c$) belong to distinct connected components of $\Gamma_e(\A_{n+1})$.
\end{remark}

\subsection{Establishing orthogonality conditions}

The following important proposition establishes the ``almost equivalence'' of $(a,b)$, $(b, \gamma_n a)$, $(\til{a}, \til{b})$ and $(\til{b}, \gamma_n a)$ where $a,b,\til{a},\til{b}$ are linearly independent elements in $\pm \E''_n$. These elements are not properly equivalent in the sense that the sets of their neighbours do not coincide. However, all four elements have the same neighbours whose both components do not belong to $\mathbb{O}_{a,b}$, see Lemma~\ref{lemma:distinct-nonspecial-element}.

\begin{proposition}[{\cite[Proposition~6.7, Proposition~6.8]{our_orthographs1}}]
\label{proposition:shift-equivalence} \label{proposition:tilde-equivalence}
\leavevmode
\begin{enumerate}
\item Let $a,b,c,d \in \A_n$ be pure. Then $(a,b)$ is orthogonal to $(c,d)$ if and only if it is orthogonal to $(d, \gamma_n c)$.
\item Let $a,\til{a},b,\til{b},c,\til{c},d,\til{d}$ be linearly independent elements in $\pm \E''_{n}$. Then $(a,b)$ is orthogonal to $(c,d)$ if and only if it is orthogonal to $(\til{c},\til{d})$.\\
Consequently, $(a,b)$ is orthogonal to $(c,d)$ if and only if $(\til{a},\til{b})$ is orthogonal to~$(\til{c},\til{d})$.
\end{enumerate}
\end{proposition}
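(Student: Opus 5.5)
The statement has two parts. I will prove both by direct computation with the Cayley--Dickson product, using that pure elements satisfy $\bar a=-a$. I will also use Proposition~\ref{proposition:orthogonality-condition}: for pure elements it suffices that one product vanishes.

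For part~(1), write
$$(a,b)(c,d)=(ac+\gamma_n \bar d b,\; da+b\bar c)=(ac-\gamma_n db,\; da-bc).$$
The same formula gives
$$(a,b)(d,\gamma_n c)=(ad+\gamma_n^2 \bar c b,\; \gamma_n ca+b\bar d)=(ad-cb,\; \gamma_n ca-bd),$$
using $\gamma_n^2=1$. These are not obviously related, so I would instead use the other product. Orthogonality of pure elements also means $(c,d)(a,b)=0$, that is,
$$ca-\gamma_n bd=0,\qquad bc-da=0 .$$
Comparing with the second display, $(a,b)(d,\gamma_n c)=0$ is equivalent to
$$ad=cb,\qquad \gamma_n ca=bd .$$
The second of these is exactly the first equation of $(c,d)(a,b)=0$. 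For the first, I need the relation between $ad$, $cb$ and $da$, $bc$. For pure elements we have $xy+yx=-2\langle x,\bar y\rangle$-type identities, since $xy+yx\in\mathbb{R}$. So I would combine the real parts of the four products, using the norm condition, to pass from $(bc=da,\ ca=\gamma_n bd)$ to $(ad=cb,\ \gamma_n ca=bd)$. Concretely: from $(a,b)(c,d)=0$ I get $ac=\gamma_n db$ and $da=bc$; from $(c,d)(a,b)=0$ I get $ca=\gamma_n bd$ and $bc=da$. Adding $ac+ca$ and $db+bd$ gives equality of the real scalars. Subtracting expresses $ad-cb$ in terms of $da-bc$ plus scalars, and the scalar terms vanish by the orthogonality of real parts. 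This symmetric pairing of the two products is the route I would pursue, and it is largely formal.

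For part~(2), the key tool is Lemma~\ref{lemma:octonionic-subalgebra}, together with the fact that $\til{x}=x\til{e}_0$. Since all elements lie in $\pm\E''_n$, every product of two of them is again $\pm$ a basis element. I would compute $(a,b)(\til c,\til d)$ componentwise: the components are $a\til c+\gamma_n\overline{\til d}\,b$ and $\til d a+b\overline{\til c}$. Applying the multiplication rules of Table~\ref{table:octonionic-subalgebra} inside the octonionic subalgebras $\mathbb{O}_{a,c}$, $\mathbb{O}_{d,b}$, and so on, gives $a\til c=\pm\til{(ac)}$ and $\overline{\til d}\,b=\pm\til{(db)}$ (for instance $a\til c=-\til{ac}$ up to the sign pattern in the table). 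Each component of the new product is then $\til{}$ applied to the corresponding component of $(a,b)(c,d)$, up to signs. The main obstacle is checking that the signs are consistent, i.e.\ that both summands in each component acquire the same sign. This needs the hypotheses that $a,\til a,\dots,d,\til d$ are linearly independent and doubly pure, so that each pair satisfies the perpendicularity hypotheses of Lemma~\ref{lemma:octonionic-subalgebra}. Here $\til x\,y=-\til{(xy)}$-type rules hold uniformly for $x\perp y$, and $x\til y=-\til{(xy)}$ as well. Once the signs match, the map $u\mapsto\til u$ is injective, so one product vanishes if and only if the other does. The stated consequence then follows by applying the first claim twice, using $\til{\til x}=\gamma_{n-1}x$ and the symmetry of orthogonality.
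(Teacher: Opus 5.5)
Your proof is correct and is the natural proof of this result; the paper imports it from~\cite{our_orthographs1} without reproducing a proof. In part~(2), the identities $x\til{y}=\til{x}\,y=-\til{(xy)}$ from Table~\ref{table:octonionic-subalgebra} hold for each of the pairs $(a,c)$, $(d,b)$, $(d,a)$, $(b,c)$ by the independence hypothesis, so $(a,b)(\til{c},\til{d})=-(\til{u},\til{v})$ with $(u,v)=(a,b)(c,d)$, and the consequence follows by applying this twice. In part~(1), your detour through real parts is just conjugation in disguise: since $yx=\overline{xy}$ for pure $x,y$, one has directly $(a,b)(d,\gamma_n c)=(\bar{v},\gamma_n\bar{u})$, so no norm condition enters.
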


\begin{definition} \label{definition:special-elements}
We call an element {\em special} if one of the following conditions is satisfied:
\begin{enumerate}
    \item $\chi = 1$ and one of its components is equal to $e_0$;
    \item $\chi = -1$ and one of its components is equal to $\til{e}_0$.
\end{enumerate}
\end{definition}

We will see soon that special elements play the role of hubs in $\Gamma_e(\A_{n+1})$. Namely, for $n \geq 3$ and $x \in \E''_n$ every two elements of $C(x)$ are connected by a path of length at most three which passes through one of the special elements. Clearly, $(e_0, c) \notin Z'_e(\A_{n+1})$ for any $c \in \pm \E_n$, so we consider only elements of the form $(c, e_0)$.

\begin{notation}
We denote
\begin{align*}
    \A_{n}^{\circ} &= \A_{n} \{\gamma_0, \dots, \gamma_{n-2}, \gamma_n \},\\
    \A_{n}^{\bullet} &= \A_{n} \{\gamma_0, \dots, \gamma_{n-2}, \gamma_{n-1} \gamma_n \}.
\end{align*}
\end{notation}

\begin{lemma}[{\cite[Lemma~6.14]{our_orthographs1}}]
\label{lemma:distinct-nonspecial-element}
Let $a,b,c,d$ be linearly independent elements in $\pm \E'_{n-1}$.
\begin{enumerate}
    \item Let $x \in \{ (a, b), (b, \gamma_n a), (\til{a}, \til{b}), (\til{b}, \gamma_n \til{a}) \}$, $y \in \{ (c, d), (c, \gamma_n d), (\til{c}, \til{d}), (\til{c}, \gamma_n \til{d}) \}$. Then $x$ and $y$ are orthogonal in $\A_{n+1}$ if and only if $(a,b)$ and $(c,d)$ are orthogonal in~$\A_{n}^{\circ}$.
    \item Let $x \in \{ (a, \til{b}), (\til{b}, \gamma_n a), (\til{a}, \gamma_{n-1} b), (b, \gamma_{n-1} \gamma_n \til{a}) \}$, $y \in \{ (c, \til{d}), (\til{d}, \gamma_n c), (\til{c}, \gamma_{n-1} d),$ $(d, \gamma_{n-1} \gamma_n \til{c}) \}$. Then $x$ and $y$ are orthogonal in $\A_{n+1}$ if and only if $(a,b)$ and $(c,d)$ are orthogonal in~$\A_{n}^{\bullet}$.
\end{enumerate}
\end{lemma}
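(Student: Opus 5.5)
The plan is to reduce everything to a direct componentwise computation in $\A_{n+1}$ and compare it with the corresponding computation in $\A_n^{\circ}$ or $\A_n^{\bullet}$. The statement has two layers. The inner layer is that the four representatives in each set are interchangeable. By Proposition~\ref{proposition:shift-equivalence}(1), $(a,b)$ is orthogonal to $y$ if and only if $(b,\gamma_n a)$ is. By Proposition~\ref{proposition:tilde-equivalence}(2), $(a,b)$ is orthogonal to $(c,d)$ if and only if it is orthogonal to $(\til{c},\til{d})$. The hypothesis of that proposition holds because $a,b,c,d\in\pm\E'_{n-1}$ are linearly independent, so $a,\til a,b,\til b,\dots$ are linearly independent elements of $\pm\E''_n$. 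Combining these two facts, and using the symmetry of orthogonality, we may replace both $x$ and $y$ by the first element of each list. For item (2) we also use $\til{\til b}=\gamma_{n-1}b$, which shows that $(\til a,\gamma_{n-1}b)$ is the tilde-image of $(a,\til b)$. Hence it suffices to prove: $(a,b)\perp(c,d)$ in $\A_{n+1}$ if and only if $(a,b)\perp(c,d)$ in $\A_n^{\circ}$, and $(a,\til b)\perp(c,\til d)$ in $\A_{n+1}$ if and only if $(a,b)\perp(c,d)$ in $\A_n^{\bullet}$.

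Since all the elements involved are pure, Proposition~\ref{proposition:orthogonality-condition} says it is enough to check a single product. In $\A_{n+1}$ we have
$$(a,b)(c,d)=(ac+\gamma_n\bar d b,\ da+b\bar c)=(ac-\gamma_n db,\ da-bc).$$
Here $a,b,c,d$ lie in $\A_{n-1}$, which is a common subalgebra of $\A_n$, $\A_n^{\circ}$ and $\A_n^{\bullet}$, so the products $ac$, $db$, $da$, $bc$ are the same in each of these algebras. The product of $(a,b)$ and $(c,d)$ in $\A_n^{\circ}=\A_{n-1}\{\gamma_n\}$ is given by the same formula, with the last parameter equal to $\gamma_n$. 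So item (1) follows immediately: the two products vanish simultaneously.

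For item (2) we write $\til b=(0,b)$ and $\til d=(0,d)$ in $\A_n=\A_{n-1}\{\gamma_{n-1}\}$ and compute
$$(a,\til b)(c,\til d)=(ac-\gamma_n\til d\,\til b,\ \til d a-\til b c).$$
Using the Cayley--Dickson formula with $a,c\in\A_{n-1}$ pure, we get $\til d\,\til b=(0,d)(0,b)=(\gamma_{n-1}\bar b d,0)=(-\gamma_{n-1}bd,0)$. Similarly $\til d a=(0,d)(a,0)=(0,d\bar a)=(0,-da)$, and $\til b c=(0,-bc)$. The product is therefore
$$\bigl(ac+\gamma_{n-1}\gamma_n bd,\ \widetilde{(bc-da)}\bigr).$$
Its vanishing is equivalent to the vanishing of the product of $(a,b)$ and $(c,d)$ in $\A_{n-1}\{\gamma_{n-1}\gamma_n\}$, up to an order-of-factors issue in $bd$ versus $db$ and a sign in the second component. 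These can be fixed by instead using the other product $(c,\til d)(a,\til b)$, or by invoking flexibility and the fact that anticommuting pure basis elements give $bd=-db$ unless $b=d$, which is excluded by linear independence.

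The main obstacle is this last sign and ordering bookkeeping in item (2): one has to match the correct product, $xy$ or $yx$, with the correct product in $\A_n^{\bullet}$. I would handle it by computing both products $xy$ and $yx$ and using that pure orthogonality needs only one of them. Distinct basis units in $\E'_{n-1}$ anticommute, so $db=-bd$ and the two conditions agree.
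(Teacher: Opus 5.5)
Your proposal is correct. The paper only quotes this lemma from Part~I, so there is no in-text proof to compare against. Your route is the natural direct verification: you collapse each four-element set to one representative via Proposition~\ref{proposition:shift-equivalence}, using $\til{\til b}=\gamma_{n-1}b$ to see that $(\til a,\gamma_{n-1}b)$ is the tilde-image of $(a,\til b)$. You then compare the Cayley--Dickson product in $\A_{n+1}$ with the one in $\A_{n-1}\{\gamma_n\}$ or $\A_{n-1}\{\gamma_{n-1}\gamma_n\}$, checking a single product by Proposition~\ref{proposition:orthogonality-condition}. Your computations $\til d\,\til b=(-\gamma_{n-1}bd,0)$, $\til d a=(0,-da)$ and $\til b c=(0,-bc)$ are right.

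\textbf{The printed set for $y$ in item~(1).} It contains $(c,\gamma_n d)$ and $(\til c,\gamma_n\til d)$. These are not shift-images of $(c,d)$ and $(\til c,\til d)$, so Proposition~\ref{proposition:shift-equivalence}(1) does not cover them. What your reduction actually proves is the version with $(d,\gamma_n c)$ and $(\til d,\gamma_n\til c)$. That is the intended statement; compare the pattern of item~(2) and the Remark following Theorem~\ref{theorem:nonspecial-components}. Read literally, the statement fails for $\gamma_n=-1$. There $(a,b)(c,d)=(ac+db,\,da-bc)=0$ forces $(a,b)(c,-d)=(ac-db,\,-da-bc)=(2ac,\,-2da)\neq 0$. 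Yet the statement ties both choices of $y$ to the same condition in $\A_n^{\circ}$. You should flag this rather than apply the shift silently.

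\textbf{The final step of item~(2).} Of the fixes you float, only anticommutation does the job. Passing to $(c,\til d)(a,\til b)$ still leaves a $bd$ versus $db$ (or $ac$ versus $ca$) mismatch. Flexibility plays no role. The sign in the second component is harmless, since $\widetilde{bc-da}=0$ if and only if $da-bc=0$. State it directly:
\begin{itemize}
\item For pure $u,v$, Proposition~\ref{proposition:lambda-form} gives $uv+vu=-2\langle u,v\rangle$.
\item The standard basis is $\langle\cdot,\cdot\rangle$-orthogonal, by induction from $n((a,b))=n(a)-\gamma n(b)$.
\item Hence $bd=-db$, and $(a,\til b)(c,\til d)=\bigl(ac-\gamma_{n-1}\gamma_n db,\ -\widetilde{da-bc}\bigr)$.
\item This vanishes exactly when $(a,b)(c,d)=(ac-\gamma_{n-1}\gamma_n db,\ da-bc)$ vanishes in $\A_n^{\bullet}$.
\end{itemize}
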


\begin{theorem}[{\cite[Lemmas~6.9,~6.10,~6.11,~6.12, and~6.15]{our_orthographs1}}] \label{theorem:orthogonality-conditions}
For any element $(x,y) \in Z'_e(\A_{n+1})$, Table~\ref{table:orthogonality-conditions} describes those elements of $Z'_e(\A_{n+1})$ which are orthogonal to it, depending on $\chi = \chi((x,y))$. We use here classification from Remark~\ref{remark:component-types}.
\end{theorem}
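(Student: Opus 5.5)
The plan is a direct computation. The theorem only collects orthogonality conditions, so I would reduce everything to explicit products of basis elements in $\A_{n+1}$ and split according to Remark~\ref{remark:component-types}.

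Fix $(x,y)\in Z'_e(\A_{n+1})$ and a candidate neighbour $(z,w)\in Z'_e(\A_{n+1})$. Both are pure, so by Proposition~\ref{proposition:orthogonality-condition} it suffices to check the single product
$$(x,y)(z,w)=(xz+\gamma_n\bar w y,\ wx+y\bar z)=0 .$$
Since all components lie in $\pm\E_n$, each coordinate is a sum of two signed basis elements. It vanishes exactly when the two terms cancel, i.e.\ $xz=\mp\gamma_n\bar w y$ and $wx=\mp y\bar z$ with compatible signs. By Lemma~\ref{lemma:basis-pairs-zero-divisors-relation} we must also have $zw=\pm xy$, so $(z,w)$ lies in the same class $C(xy)$. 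This leaves a finite, type-specific list of candidates from Remark~\ref{remark:component-types}. The value $\chi=\gamma_n n(xy)$ from Proposition~\ref{proposition:chi-value} fixes all signs in terms of $\gamma_n$ and the norms of the basis elements.

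Next I would organise the cases by type. For types (I) and (II), the components lie in $\Lin(a,\til a)$ or $\Lin(e_0,\til e_0)$. These form a quaternionic subalgebra, so associativity lets me compute the products directly. Here the self-orthogonality of Proposition~\ref{proposition:self-orthogonality} appears when $\chi=1$. For types (III) and (IV), I would split the candidate $(z,w)$ according to whether it is special in the sense of Definition~\ref{definition:special-elements}:
\begin{itemize}
\item if one of its components is $e_0$ or $\til e_0$, or both components lie in $\mathbb{O}_{a,b}$, I compute inside $\mathbb{O}_{a,b}$ using Table~\ref{table:octonionic-subalgebra}, which is legitimate by Lemma~\ref{lemma:octonionic-subalgebra};
\item if neither component lies in $\mathbb{O}_{a,b}$, then $a,b,c,d$ are independent, and Lemma~\ref{lemma:distinct-nonspecial-element} reduces orthogonality to orthogonality of $(a,b),(c,d)$ in $\A_n^{\circ}$ or $\A_n^{\bullet}$.
\end{itemize}
Proposition~\ref{proposition:shift-equivalence} lets me pass freely among the four representatives $(a,b)$, $(b,\gamma_n a)$, $(\til a,\til b)$, $(\til b,\gamma_n\til a)$. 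This cuts the case count by a factor of about four. Theorem~\ref{theorem:A_n-double-hexagon} then provides the hexagon neighbours $(\overline{ac},\mp\chi da)$ without separate verification.

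I expect the main obstacle to be the mixed cases: one element is special (for example $(c,e_0)$ or $(\til e_0,\til c)$) and the other is a generic pair $(a,b)$ with $ab=\pm c$. Here the product involves $\til e_0$ multiplied by arbitrary basis elements, and the signs depend on $\gamma_{n-1}$, $\gamma_n$ and $n(a),n(b)$ simultaneously. To handle these I would express the element as $\til u=u\til e_0$ and use the formula $\til a=(\gamma_{n-1}a_2,a_1)$ together with Table~\ref{table:octonionic-subalgebra} applied to $\mathbb{O}_{a,b}$. The sign conditions then reduce to an equation in $\chi$, which should sort each such candidate into orthogonal or non-orthogonal according to the value of $\chi$, as the table records.
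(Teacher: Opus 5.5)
Your overall architecture is sound and is the natural way to verify the table. The paper itself only cites the earlier work, where Lemma~\ref{lemma:distinct-nonspecial-element} plays exactly the role you give it for the generic neighbours. However, one step fails as written.

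For type~(II) you plan to compute inside the quaternion algebra $\Lin(e_0,a,\til{e}_0,\til{a})$ using associativity. But the whole $\chi=-1$ entry for $(\til{a},\pm a)$ consists of the elements $(\til{b},\mp b)$ with $b\in\E''_n\setminus\{a,\til{a}\}$ arbitrary. The relevant products therefore live in the non-associative algebra $\mathbb{O}_{a,b}$, and associativity gives the wrong sign exactly where it matters. For distinct $a,b\in\E'_{n-1}$ the Cayley--Dickson formula gives $\til{a}\til{b}=(0,a)(0,b)=(\gamma_{n-1}\bar{b}a,0)=\gamma_{n-1}ab$. Reassociating $(a\til{e}_0)(b\til{e}_0)$ using $\til{e}_0 b=-\til{b}$ and $(\til{e}_0)^2=\gamma_{n-1}$ instead yields $-\gamma_{n-1}ab$. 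Together with $b\til{a}=\til{ab}=-a\til{b}$, the correct computation gives $(\til{a},a)(\til{b},-b)=((\gamma_{n-1}-\gamma_n)ab,0)$. This vanishes iff $\gamma_{n-1}\gamma_n=1$, i.e.\ iff $\chi=-\gamma_{n-1}\gamma_n=-1$. Your shortcut would instead put these neighbours in the $\chi=1$ column. So type~(II) must be done with Table~\ref{table:octonionic-subalgebra}, as you already plan for types~(III)--(IV). For type~(I) only $ab=-ba$ enters, so no harm occurs there.

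Two further points. First, the cells ``Not a zero divisor'' claim that $(a,\pm a)$ with $\gamma_n=-1$ and $(c,e_0)$ with $\gamma_n n(c)=-1$ have no annihilator at all. Testing candidates from $Z'_e(\A_{n+1})$ cannot show this; you need an arbitrary $(u,v)$. For example, $(c,e_0)(u,v)=0$ forces $v=\gamma_n\bar{u}c$. Then the second coordinate and alternativity of $c$ give $\bar{u}=\gamma_n n(c)\bar{u}=\chi\bar{u}$, so $u=v=0$ when $\chi=-1$. The case $(a,\pm a)$ is handled the same way, using alternativity of $a$ and flexibility.

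Second, Proposition~\ref{proposition:shift-equivalence} does not let you pass freely among $(a,b)$, $(b,\gamma_n a)$, $(\til{a},\til{b})$, $(\til{b},\gamma_n\til{a})$. Part~(1) needs pure components and part~(2) needs all eight elements independent, so neither reaches the special neighbours or the neighbours inside $\mathbb{O}_{a,b}$. Indeed, for $\chi=1$ and $ab=n(b)x$, $(a,b)$ is orthogonal to $(x,e_0)$, while $(b,\gamma_n a)$ and $(\til{a},\til{b})$ are orthogonal to $(x,-e_0)$ instead. The factor-four reduction is valid only in your second bullet, where Lemma~\ref{lemma:distinct-nonspecial-element} already contains it.
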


\begin{table}[ht!]
\centering
\scalebox{0.9}{
\begin{tabular}{|N|p{0.13\linewidth}|p{0.2\linewidth}|p{0.28\linewidth}|p{0.27\linewidth}|}
\hline
\multicolumn{1}{|c|}{\multirow{2}{*}{\textbf{Num.}}} & \multicolumn{1}{c|}{\multirow{2}{*}{\textbf{Type}}} & \multicolumn{1}{c|}{\multirow{2}{*}{\textbf{Element}}} & \multicolumn{2}{c|}{\textbf{Orthogonal elements from $Z'_e(\A_{n+1})$}}\\
\cline{4-5}
\multicolumn{1}{|c|}{} & & & \multicolumn{1}{c|}{$\chi = 1$} & \multicolumn{1}{c|}{$\chi = -1$} \\
\hline
\label{theorem:type-1-element} & Type~\ref{item:component-type-1} & $(a, \pm a)$, $a \in \E'_n$, $\chi = \gamma_n$ & Itself and all $(b, \mp b)$ such that $b \in \E'_n \setminus \{ a \}$ & Not a zero divisor \\
\hline
\label{theorem:type-2-element} & Type~\ref{item:component-type-2} without $(\til{e}_0, \pm e_0)$ & $(\til{a}, \pm a)$, $a \in \E''_n$, $\chi = - \gamma_{n-1} \gamma_n$ & Itself, $(\til{e}_0, \pm e_0)$, and $(a, \pm \gamma_n \til{a})$ & All $(\til{b}, \mp b)$ such that $b \in \E''_n \setminus \{ a, \til{a} \}$ \\
\hline
\label{theorem:e_0-special-element} & Types \ref{item:component-type-3}--\ref{item:component-type-4}, $(\til{e}_0, \pm e_0)$ & $(c, e_0)$, $c \in \pm \E'_n$ & All $(a,b) \in Z'_e(\A_{n+1})$ with $ab = n(b)c$. This case is special & Not a zero divisor \\
\hline
\label{theorem:til-e_0-special-element} & Types \ref{item:component-type-3}--\ref{item:component-type-4} & $(c, \til{e}_0)$, $(\til{e}_0, \gamma_n c)$, $c \in \pm \E''_n$, their $\chi$ are equal & Themselves, and there is a path $(\til{c}, -\gamma_{n-1} e_0) - (c, \til{e}_0) - (\til{e}_0, \gamma_n c) - (\til{c}, \gamma_{n-1} e_0)$. By Definition~\ref{definition:special-elements}, this case is nonspecial & All $(a,b) \in Z'_e(\A_{n+1})$ such that $a \perp \mathbb{H}_c = \Lin(e_0, c, \til{e}_0, \til{c})$ and $ab = \gamma_{n-1}n(b) \til{c}$. This case is special \\
\hline
\label{theorem:common-nonspecial-element} & Types \ref{item:component-type-3}--\ref{item:component-type-4} & $(a,b)$, $a,b,\til{a},\til{b} \in \pm \E''_n$ are linearly independent & Itself, $(b, \gamma_n a)$, $(c,e_0)$ with $ab = n(b)c$, and nonspecial elements from Lemma~\ref{lemma:distinct-nonspecial-element} & $(\til{a}, \til{b})$ and $(\til{b}, \gamma_n \til{a})$, $(c, \til{e}_0)$ and $(\til{e}_0, \gamma_n c)$ with $ab = \gamma_{n-1}n(b) \til{c}$, nonspecial elements from Lemma~\ref{lemma:distinct-nonspecial-element} \\
\hline
\end{tabular}
}
\caption{Orthogonality conditions in $Z'_e(\A_{n+1})$ \label{table:orthogonality-conditions}}
\end{table}

\begin{corollary}[{\cite[Theorem~6.16]{our_orthographs1}}]
\label{corollary:zero-divisors-criterion}
Let $n \geq 1$, $(a,b) \in \E'_n \times (\pm \E_n)$. Then $(a,b) \in Z(\A_{n+1})$, except for the following cases:
\begin{enumerate}
    \item $n \leq 2$ and $\chi = -1$;
    \item $b = \pm a$ and $\chi = \gamma_n = -1$;
    \item $b = \pm e_0$ and $\chi = \gamma_n n(a) = -1$.
\end{enumerate}
\end{corollary}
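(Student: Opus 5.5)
The plan is to split according to the value of $\chi = \chi((a,b))$. For $(a,b) \in \E'_n \times (\pm \E_n)$ we have $n(a), n(b) \in \{\pm 1\}$, so condition~\eqref{equation:norm-condition} holds. Hence
$$
n((a,b)) = n(a) - \gamma_n n(b) = n(a)(1-\chi).
$$

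\emph{Case $\chi = 1$.} Here $n((a,b)) = 0$. Since $(a,b)$ is pure (its first component $a \in \E'_n$ is pure), Proposition~\ref{proposition:self-orthogonality} gives $(a,b)^2 = 0$. So $(a,b)$ is a zero divisor for every $n \geq 1$, which matches the statement: none of the exceptional cases has $\chi = 1$.

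\emph{Case $\chi = -1$, exceptional cases.} Now $n((a,b)) = 2n(a) \neq 0$. I show each listed case is not a zero divisor.
\begin{itemize}
\item If $n \leq 2$, then $\A_{n+1}$ has dimension at most $8$, so it is alternative and a composition algebra. An element of nonzero norm is invertible there, and in an alternative algebra an invertible element is not a zero divisor.
\item If $b = \pm e_0$, write out $(a,e_0)(c,d) = 0$. This gives $ac + \gamma_n \bar d = 0$ and $da + \bar c = 0$. Conjugating the second equation and substituting the first yields $c = \gamma_n n(a) c$, using alternativity of the basis element $a$ and $a^2 = -n(a)$. Thus $c = 0$, hence $d = 0$, unless $\gamma_n n(a) = 1$, i.e.\ $\chi = 1$. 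The product in the other order is handled symmetrically, and $Z = Z_{LR}$ anyway.
\item If $b = \pm a$ with $\gamma_n = -1$, the same computation applies. Solve $da \pm a\bar c = 0$ for $d$ using $a^{-1} = -a/n(a)$ and the alternativity of $a$ (so $(xa)a^{-1} = x$). Substituting into $ac \pm \gamma_n \bar d a = 0$ should collapse to $(1+\gamma_n)$ times something, or to $c = \gamma_n c$ after simplification. With $\gamma_n = -1$ this forces $c = d = 0$.
\end{itemize}
The delicate point is that $c,d$ are arbitrary, not basis elements, so only identities involving $a$ alone (Artin-type, since $a$ is alternative) may be used. I expect this bookkeeping to be the main obstacle.

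\emph{Case $\chi = -1$, $n \geq 3$, remaining elements.} Here I exhibit an explicit nonzero orthogonal element from Table~\ref{table:orthogonality-conditions}, using the type classification of Remark~\ref{remark:component-types}.
\begin{itemize}
\item Type~\ref{item:component-type-1} with $\chi = -1$ is exactly the excluded case $b = \pm a$, $\gamma_n = -1$.
\item For type~\ref{item:component-type-2}, row~\ref{theorem:type-2-element} gives the neighbours $(\til{b}, \mp b)$ with $b \in \E''_n \setminus \{a, \til a\}$. Such $b$ exist because $|\E''_n| = 2^n - 2 \geq 6$ for $n \geq 3$. The element $(\til{e}_0, \pm e_0)$ is the excluded case $b = \pm e_0$ with $\gamma_n n(\til e_0) = -1$.
\item For types~\ref{item:component-type-3}--\ref{item:component-type-4}, row~\ref{theorem:til-e_0-special-element} handles $(c,\til e_0)$ and $(\til e_0, \gamma_n c)$: there are elements $(a',b')$ with $a' \perp \mathbb{H}_c$ and $a'b' = \gamma_{n-1} n(b')\til c$, which exist since $n \geq 3$ leaves room outside $\mathbb{H}_c$.
\item For generic $(a,b)$ with $a,b,\til a,\til b$ independent, row~\ref{theorem:common-nonspecial-element} supplies $(\til a,\til b)$ as a neighbour.
\item The elements $(c, e_0)$ with $\chi = -1$ are the excluded case $b = \pm e_0$.
\end{itemize}
The only care needed is to check that every element is covered by exactly one row or excluded case, which follows from Remark~\ref{remark:component-types}.
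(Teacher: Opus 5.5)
Your proposal is correct and matches how the paper obtains the statement. The paper quotes it from Part~I as a corollary of the orthogonality conditions in Theorem~\ref{theorem:orthogonality-conditions}: every row of Table~\ref{table:orthogonality-conditions} with a nonempty orthogonal set gives a zero divisor, the ``Not a zero divisor'' entries of rows~\ref{theorem:type-1-element} and~\ref{theorem:e_0-special-element} are exactly exceptions (2) and (3), and $n\le 2$ is settled by invertibility in a composition algebra, as you argue.

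The one step you left as an expectation, $b=\epsilon a$ with $\gamma_n=-1$, does close, but the coefficient is $1-\gamma_n$, not $1+\gamma_n$. From $(a,\epsilon a)(c,d)=0$ you get $d=\epsilon(a\bar c)a/n(a)$, hence $\bar d=\epsilon\,a(ca)/n(a)$. Flexibility together with right alternativity of $a$ then gives $\bar d a=\epsilon\,a((ca)a)/n(a)=-\epsilon\,ac$. The first component therefore becomes $(1-\gamma_n)ac=0$, which forces $ac=0$ and then $c=d=0$. Alternatively, you can simply cite row~\ref{theorem:type-1-element}, just as you cite the table elsewhere.
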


\section{The subgraph of $\Gamma_O'(\A_{n+1})$ on the elements $\left[\left(e_i^{(n)}, \pm e_j^{(n)}\right)\right]$} \label{section:constructing-graph}

\subsection{Low-dimensional examples} \label{subsection:low-dimensional-examples}

It follows from Proposition~\ref{proposition:split-algebras} that for $n \leq 2$ all real Cayley--Dickson algebras $\A_{n+1}$ are isomorphic either to algebras of the main sequence $\Main_{n+1}$ ($\mathbb{R}$, $\mathbb{C}$, $\mathbb{H}$, $\mathbb{O}$) or to split-algebras $\Hyp_{n+1}$ ($\hat{\mathbb{C}}$, $\hat{\mathbb{H}}$, $\hat{\mathbb{O}}$). However, if $n \leq 2$, then $\Gamma_e(\A_{n+1})$ may be nonisomorphic for isomorphic algebras. We will discuss this phenomenon more thoroughly in Subsection~\ref{subsection:distinguish-low-dimensional}. In this subsection we restrict our attention to $\Main_{n+1}$ and $\Hyp_{n+1}$ only.

It is well known that there are no zero divisors in $\Main_{n+1}$ for $n \leq 2$, so their orthogonality graphs have no vertices. $\Gamma_O(\hat{\mathbb{C}})$ consists of two vertices $[(e_0,e_0)]$ and $[(e_0,-e_0)]$, so all zero divisors are nonpure, and $V(\Gamma_e(\hat{\mathbb{C}}))$ is empty. $\Gamma_e(\hat{\mathbb{H}})$ and $\Gamma_e(\hat{\mathbb{O}})$ are depicted in Figs.~\ref{figure:H2-graph} and~\ref{figure:H3-graph}, respectively.

We now consider three particular Cayley--Dickson algebras $\A_{n+1}$ with $n \geq 3$, namely, $\mathbb{S} = \Main_4$, $\hat{\mathbb{S}} = \Hyp_4$, and $\Main_5$.

\begin{example}
By~\cite[Figs.~1 and~2]{our_sedenions}, any connected component $C(ab)$ of $\Gamma_e(\mathbb{S})$ is a double hexagon, see Fig.~\ref{figure:double-hexagon}. As follows from~\cite[Proposition~4.1]{our_sedenions}, here $e_0,a,b,c,d,ab,ac,ad$ form an orthonormal basis in $\mathbb{O}$. Besides, Lemma~\ref{lemma:sedenions-zero-divisors-relation} implies that $ab = dc = (ac)(ad)$. The whole graph $\Gamma_e(\mathbb{S})$ is depicted in Fig.~\ref{figure:M4-graph}, cf.~\cite[Fig.~2]{our_sedenions}.
\end{example}

\begin{remark} \label{remark:octonionic-notation}
We will use the same notation to describe connected components of $\Gamma_e(\hat{\mathbb{S}})$ and $\Gamma_e(\Main_5)$. Let $x \neq e_0$ be some basis element of $\mathbb{O}$. Then there exist $a,b,c,d \in \E'_3$ such that $x = ab$ and $(a,b)(c,d) = 0$ in $\mathbb{S}$. Then $\{e_0,a,b,c,d,ac,ad,x\} \subseteq \pm \E_3$ is an orthonormal basis in $\mathbb{O}$. The element $x$ can be obtained by multiplying the following pairs of elements of $\pm \E'_3$, and only them:
$$
x = ab = b \cdot (-a) = dc = c \cdot (-d) = (ac)(ad) = (ad) \cdot (-ac).
$$
\end{remark}

\begin{example}
In case of the split-sedenions, for any $(x,y) \in Z'_e(\hat{\mathbb{S}})$ we always have $n(x) = n(y) = 1$, so $\chi = \gamma_3 \frac{n(x)}{n(y)} = 1$. By~\cite[Lemma~4.21]{our_split-algebras},
$$
O_{\hat{\mathbb{S}}}((x,y)) = \left\{ \left(z, -(yz)x \right) \; \bigg| \; \Re(z) = 0, \: y(zx) = (yz)x \right\}.
$$
We are interested in $z \in \E_3$ only. If $y = \pm x$ or $y = e_0$, then we can take any $z \in \E'_n$. Otherwise, $x$ and $y$ form a quaternionic subalgebra in $\mathbb{O}$, and thus we may restrict our attention to $z \in \{ x, y, xy \}$.

The connected component $C(e_0)$ contains elements of the form $(a,\pm a)$ for $a \in \E'_3$, and $(a,a)$ is adjacent to $(b,-b)$ if and only if $a \neq b$, see Theorem~\ref{theorem:type-1-element}. We will call it {\em an almost complete $(7,7)$-bipartite graph}.

The connected component $C(ab)$ is depicted in Fig.~\ref{figure:bundle}, where $a,b,c,d$ are defined in Remark~\ref{remark:octonionic-notation}. We use here Theorem~\ref{theorem:e_0-special-element} and~\ref{theorem:common-nonspecial-element}. The graph $\Gamma_e(\hat{\mathbb{S}})$ is depicted in Fig.~\ref{figure:H4-graph}.
\end{example}

\begin{figure}[h!]
\centering
\begin{minipage}{0.5\textwidth}
\centering
\includegraphics[width=\linewidth]{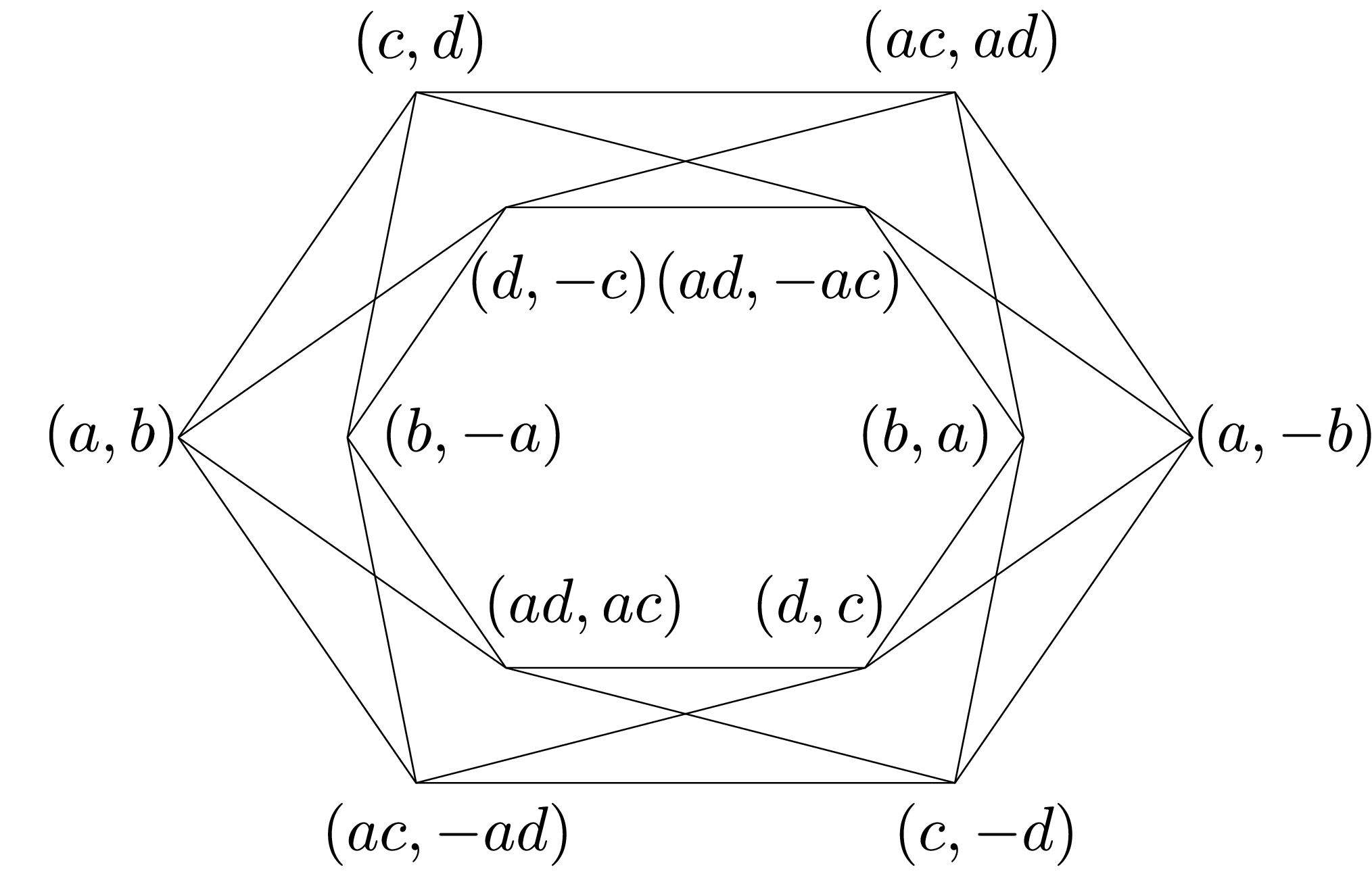}
\caption{\label{figure:double-hexagon} A double hexagon.}
\end{minipage}%
\begin{minipage}{0.5\textwidth}
\centering
\includegraphics[width=\linewidth]{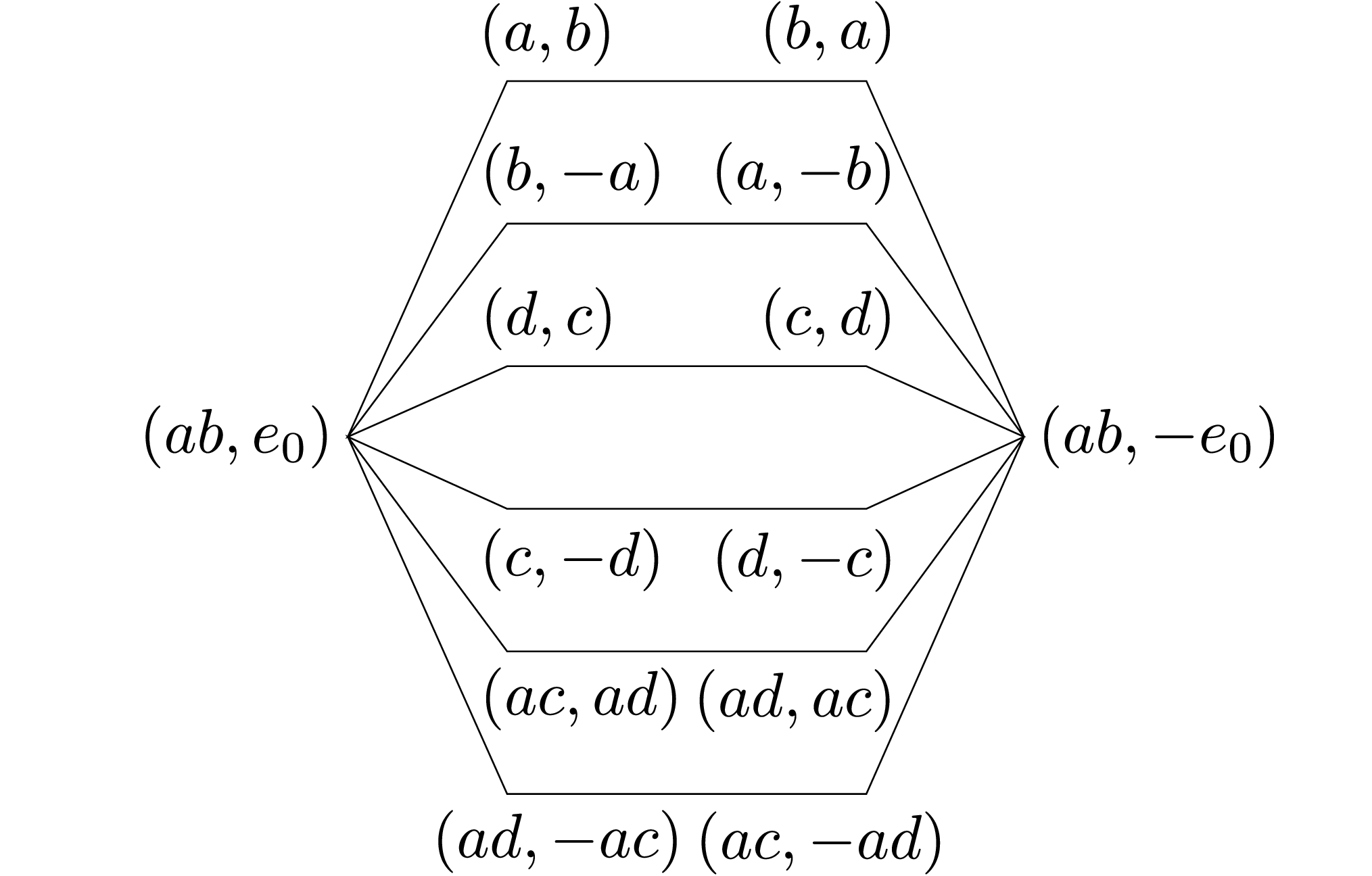}
\caption{\label{figure:bundle} A bundle of hexagons.}
\end{minipage}
\end{figure}

\begin{example}
We finally consider $\Main_5$, see Fig.~\ref{figure:M5-graph}. Note that, by~\cite[Corollary~1.12]{moreno}, any vertex $(x,y)$ in $\Gamma_O(\Main_n)$ always has the same neighbours as $(y,-x)$. Besides, $(x,y)$ and $(y,-x)$ are never adjacent. To make the picture of $\Gamma_e(\Main_5)$ more readable, from each such pair of vertices we choose the element whose components are both positive, say $(x,y)$, and replace this pair by the only vertex $(x,y)$. Thus, the double hexagon in Fig.~\ref{figure:double-hexagon} would be simplified to
$$
(a,b) \leftrightarrow (c,d) \leftrightarrow (ac,ad) \leftrightarrow (b,a) \leftrightarrow (d,c) \leftrightarrow (ad, ac) \leftrightarrow (a,b).
$$

By Theorem~\ref{theorem:type-1-element}, $\Gamma_e(\Main_5)$ has no vertices of type~\ref{item:component-type-1}, so its $C(e_0)$ is empty. The appearance of $C(\til{e}_0)$ follows from Theorem~\ref{theorem:type-2-element}, since for elements of type~\ref{item:component-type-2} we have $\chi = -\gamma_3 \gamma_4 = -(-1)(-1) = -1$. Note that for $b = \til{c}$ we have $(\til{b},\mp b) = (\til{\til{c}}, \mp \til{c}) = (\gamma_3 c, \mp \til{c}) = -(c,\pm \til{c})$.

Consider some connected component of type~\ref{item:component-type-3}, say $C(ab)$, where $a,b,c,d$ are defined in Remark~\ref{remark:octonionic-notation}. Then $\A_4^{\circ} = \A_4\{-1,-1,-1,-1\} = \mathbb{S}$, so, by Lemma~\ref{lemma:distinct-nonspecial-element}, multiplication in $\Main_5$ is reduced to multiplication in $\mathbb{S}$. It remains to use Fig.~\ref{figure:double-hexagon} for middle hexagons and Theorem~\ref{theorem:til-e_0-special-element} and~\ref{theorem:common-nonspecial-element} for additional edges which serve two purposes. Firstly, they connect all elements either to $(\til{e}_0,\til{ab})$ or to $(\til{ab},\til{e}_0)$. Secondly, for $x,y \in \E'_3$ they connect $(x,y)$ and $(\til{x},\til{y})$. When passing by a solid edge, the product of the components changes sign, as it occurs for the sedenions, cf. Lemma~\ref{lemma:sedenions-zero-divisors-relation}. When passing by a dashed edge, the product of the components is preserved. According to Lemma~\ref{lemma:basis-pairs-zero-divisors-relation}, these are the only two opportunities.

Let $C(\til{ab})$ be a connected component of type~\ref{item:component-type-4}. Since $\A_4^{\bullet} = \A_4\{-1,-1,-1,
\\
(-1)(-1)\} = \hat{\mathbb{S}}$, by Lemma~\ref{lemma:distinct-nonspecial-element}, multiplication in $\Main_5$ is reduced to multiplication in $\hat{\mathbb{S}}$. However, $\Gamma_e(\hat{\mathbb{S}})$ contains no doubly pure elements with linearly independent components which are adjacent. Hence we use Theorem~\ref{theorem:til-e_0-special-element} and~\ref{theorem:common-nonspecial-element} to obtain $C(\til{ab})$. Note that $(\til{a},b)$ is orthogonal to $(\gamma_4 \til{b},\til{\til{a}}) = (\gamma_4 \til{b}, \gamma_3 a) = -(\til{b},a)$.
\end{example}

\begin{figure}[h!]
\centering
\includegraphics[width=\linewidth]{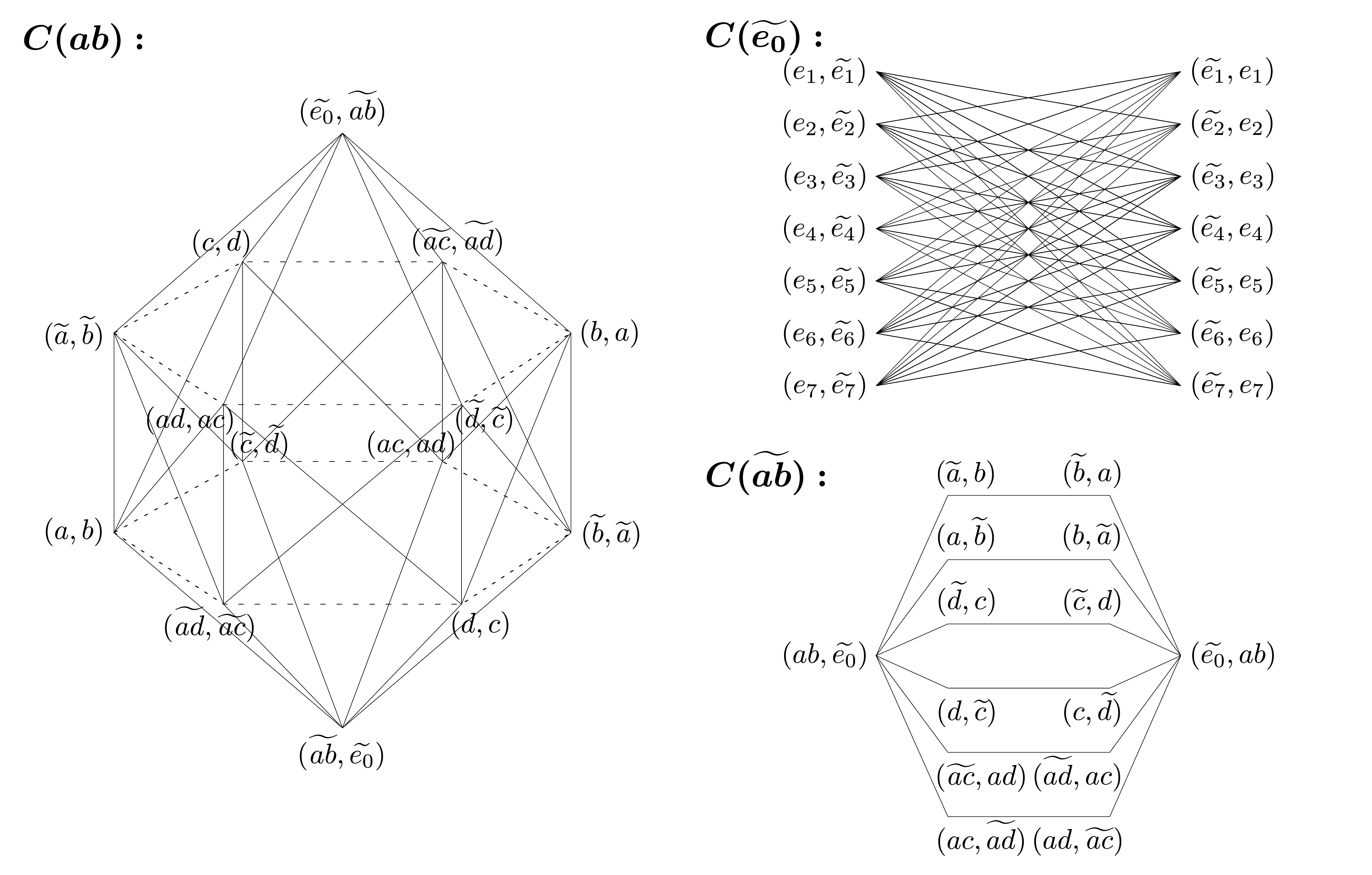}
\caption{\label{figure:M5-graph} Connected components of $\Gamma_e(\Main_5)$.}
\end{figure}

\subsection{Graph structure} \label{subsection:graph-structure}

We have described some graphs $\Gamma_e(\Main_{n+1})$ and $\Gamma_e(\Hyp_{n+1})$ in Subsection~\ref{subsection:low-dimensional-examples}. We now generalize these results to the connected components of $\Gamma_e(\A_{n+1})$ in general and find their diameters.

We consider first elements of types~\ref{item:component-type-1} and~\ref{item:component-type-2} from Remark~\ref{remark:component-types}. We denote $N = 2^{n-1} - 1$.

\begin{lemma} \label{lemma:special-components}
$C(e_0)$ and $C(\til{e}_0)$ are depicted in Fig.~\ref{figure:special-components} for various values of $\gamma_n$ and $\gamma_{n-1}\gamma_n$. Note that, if $\gamma_n = -1$, then $C(e_0)$ has an empty set of vertices. For $n \geq 3$ the subgraphs $C(e_0)$ and $C(\til{e}_0)$, if nonempty, are connected components of diameter three.
\end{lemma}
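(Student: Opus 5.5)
The approach is to read off the vertex sets and edges of $C(e_0)$ and $C(\til{e}_0)$ directly from Remark~\ref{remark:component-types} and Table~\ref{table:orthogonality-conditions}, and then to compute distances by hand.

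\textbf{Vertices of $C(e_0)$.} By Remark~\ref{remark:component-types}\ref{item:component-type-1}, these are the elements $(a,\pm a)$ with $a\in\E'_n$. Their characteristic is $\chi=\gamma_n n(a)/n(a)=\gamma_n$. By Corollary~\ref{corollary:zero-divisors-criterion}(2), they are zero divisors exactly when $\gamma_n=1$. So $C(e_0)$ is empty for $\gamma_n=-1$.

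\textbf{Edges of $C(e_0)$.} For $\gamma_n=1$, row~\ref{theorem:type-1-element} of the Table gives the adjacency: $(a,\pm a)$ is joined to $(b,\mp b)$ for all $b\neq a$, and to itself. Hence $C(e_0)$ is the almost complete $(2^n-1,2^n-1)$-bipartite graph with parts $\{(a,a)\}$ and $\{(a,-a)\}$. Loops are ignored, since they do not affect distances.

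\textbf{Diameter of $C(e_0)$.} Since $n\ge 3$, there are at least $7$ elements in $\E'_n$.
\begin{itemize}
\item Two vertices $(a,a)$ and $(b,b)$ on the same side are at distance $2$, via $(c,-c)$ with $c\notin\{a,b\}$.
\item The vertices $(a,a)$ and $(a,-a)$ are nonadjacent. They are joined by the path $(a,a)-(b,-b)-(c,c)-(a,-a)$ with $a,b,c$ distinct. By bipartiteness their distance is odd, hence exactly $3$.
\end{itemize}
So $C(e_0)$ is connected of diameter three.

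\textbf{Vertices of $C(\til{e}_0)$.} By Remark~\ref{remark:component-types}\ref{item:component-type-2}, these are $(\til{e}_0,\pm e_0)$, together with $(\til{a},\pm a)$ and $(a,\pm\til a)$ for $a\in\E'_{n-1}$. Equivalently, they are the elements $(\til{a},\pm a)$ with $a\in\E''_n$, up to sign, using $\til{\til a}=\gamma_{n-1}a$. Their characteristic is $\chi=-\gamma_{n-1}\gamma_n$, by row~\ref{theorem:type-2-element} of the Table. The element $(\til e_0,\pm e_0)$ has $\chi=\gamma_n n(\til e_0)=-\gamma_{n-1}\gamma_n$ as well, and by Corollary~\ref{corollary:zero-divisors-criterion}(3) it is a zero divisor only when $\chi=1$. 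For $n\ge 3$, all elements $(\til a,\pm a)$ are zero divisors for either sign of $\chi$, since none of the exceptional cases of Corollary~\ref{corollary:zero-divisors-criterion} applies.

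\textbf{Case $\gamma_{n-1}\gamma_n=-1$, so $\chi=1$.} By rows~\ref{theorem:type-2-element} and~\ref{theorem:e_0-special-element} of the Table, $(\til e_0,\pm e_0)$ is adjacent to every $(\til a,\pm a)$ with the matching product sign. Each pair $(\til a,\pm a)$, $(a,\pm\gamma_n\til a)$ is also adjacent. The sign matching has to be checked against $ab=n(b)c$.

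The resulting graph is two stars centred at $(\til e_0,e_0)$ and $(\til e_0,-e_0)$, whose leaves are joined in pairs. This cannot be literally right: a disjoint union of two stars cannot have diameter three as a connected component. So I must recheck the signs, since $C(\til e_0)$ contains both sign classes $\pm\til e_0$ of the product. I expect the leaves of one star to be matched to leaves of the other through the edges $(\til a,\pm a)-(a,\pm\gamma_n\til a)$, because the product flips sign. That gives connectivity. The distance between the two hubs is then $3$, via hub, leaf, matched leaf, hub, and every other pair of vertices is at distance at most $3$.

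\textbf{Case $\gamma_{n-1}\gamma_n=1$, so $\chi=-1$.} Here $(\til e_0,\pm e_0)$ is not a zero divisor. Row~\ref{theorem:type-2-element} of the Table gives that $(\til a,\pm a)$ is adjacent to $(\til b,\mp b)$ for every $b\notin\{a,\til a\}$. This is bipartite by the sign, and almost complete: each vertex misses only its own pair $\{a,\til a\}$, that is, two vertices on the opposite side, which are the representatives of $(\til a,\mp a)$ and $(a,\mp\til a)$ up to scalar.

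With $N=2^{n-1}-1\ge 3$ such pairs $\{a,\til a\}$, the same argument as for $C(e_0)$ applies. Vertices on the same side are at distance $2$, via a third pair. A nonadjacent vertex on the opposite side is at distance $3$, by parity together with the path through two other pairs.

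\textbf{Main obstacle.} The main obstacle is the sign bookkeeping in the case $\chi=1$: determining exactly which of $(\til e_0,\pm e_0)$ each $(\til a,\pm a)$ attaches to, and which sign of $(a,\pm\gamma_n\til a)$ it is matched with. I would settle this by computing $xy$ for each vertex explicitly, using the formula $\til{e_j}$ and $\til{\til a}=\gamma_{n-1}a$, and then applying the condition $ab=n(b)c$ from row~\ref{theorem:e_0-special-element} of the Table. Finally, I would confirm that the picture matches Fig.~\ref{figure:special-components}.
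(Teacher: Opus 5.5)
Your route is the same as the paper's. You take the vertex sets from Remark~\ref{remark:component-types} and Corollary~\ref{corollary:zero-divisors-criterion}, the adjacencies from rows~\ref{theorem:type-1-element}--\ref{theorem:e_0-special-element} of Table~\ref{table:orthogonality-conditions}, and then check distances by hand.

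Your treatment of $C(e_0)$ is correct, and so is your treatment of $C(\til{e}_0)$ for $\gamma_{n-1}\gamma_n=1$. There is one harmless slip in the latter. The second non-neighbour of $(\til{a},\sigma a)$ is
\[
(\til{\til{a}},-\sigma\til{a})=\gamma_{n-1}(a,-\sigma\gamma_n\til{a}),
\]
that is, the vertex $(a,-\sigma\gamma_n\til{a})$, not $(a,-\sigma\til{a})$. This is exactly the identification the paper's proof records, and your counting argument does not depend on it.

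The real gap is the case $\gamma_{n-1}\gamma_n=-1$, which you leave as an expectation. Your supporting argument, that two disjoint stars cannot form a component of diameter three, assumes the conclusion. The attachment of each leaf to a hub has to be computed, and the computation is short. In $\A_n=\A_{n-1}\{\gamma_{n-1}\}$, for $a\in\E'_{n-1}$ one has:
\begin{itemize}
\item $\til{a}\,a=(0,a)(a,0)=n(a)\til{e}_0$;
\item $a\,\til{a}=(a,0)(0,a)=-n(a)\til{e}_0$;
\item $n(\til{a})=-\gamma_{n-1}n(a)$.
\end{itemize}
Hence $(\til{a},a)$ satisfies $xy=n(y)\til{e}_0$, whereas $(a,\gamma_n\til{a})$ satisfies $xy=\gamma_{n-1}\gamma_n\,n(y)\til{e}_0=-n(y)\til{e}_0$. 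By row~\ref{theorem:e_0-special-element}, the first is adjacent to $(\til{e}_0,e_0)$ and the second to $(\til{e}_0,-e_0)$. The same computation with the signs reversed handles $(\til{a},-a)$ and $(a,-\gamma_n\til{a})$. Alternatively, apply Theorem~\ref{theorem:A_n-double-hexagon} to $(\til{e}_0,e_0)$ and $(\til{a},a)$, exactly as the paper does for $(x,e_0)$ in Theorem~\ref{theorem:nonspecial-components}.

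Row~\ref{theorem:type-2-element} gives each leaf no other neighbours. So $C(\til{e}_0)$ is the bundle of the $N$ hexagons
\[
(\til{e}_0,e_0)\leftrightarrow(\til{a},a)\leftrightarrow(a,\gamma_n\til{a})\leftrightarrow(\til{e}_0,-e_0)\leftrightarrow(\til{a},-a)\leftrightarrow(a,-\gamma_n\til{a})\leftrightarrow(\til{e}_0,e_0),
\]
one for each $a\in\E'_{n-1}$. The two hubs are nonadjacent and have no common neighbour, so their distance is exactly $3$. Every other pair of vertices is at distance at most $3$. With this computation inserted, your proof is complete.
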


\begin{proof}
The appearance of $C(e_0)$ follows immediately from Theorem~\ref{theorem:type-1-element}.

For $C(\til{e}_0)$ we apply Theorem~\ref{theorem:type-2-element} and~\ref{theorem:e_0-special-element}. The case when $\gamma_{n-1}\gamma_n = -1$ is straightforward. In the case when $\gamma_{n-1}\gamma_n = 1$ it remains to note that for $b = \til{c}$ we have $(\til{b},\mp b) = (\til{\til{c}}, \mp \til{c}) = (\gamma_{n-1} c, \mp \til{c}) = \gamma_n (c,\mp \gamma_n \til{c})$.

The connectedness and the diameter value are verified directly.
\end{proof}

\begin{figure}[h!]
\centering
\includegraphics[width=\linewidth]{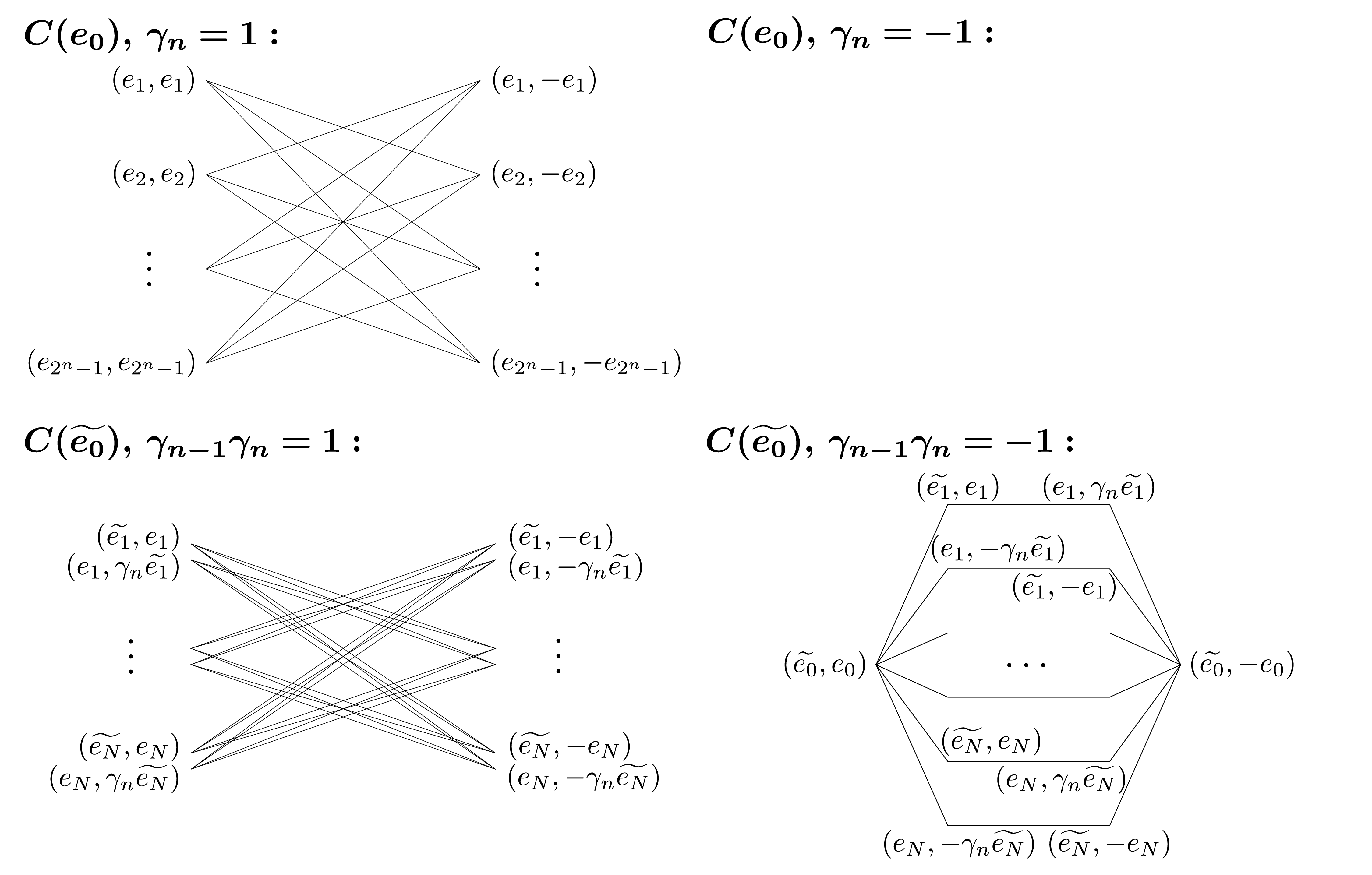}
\caption{\label{figure:special-components} $C(e_0)$ and $C(\til{e}_0)$ for various values of $\gamma_{n-1}$ and $\gamma_n$.}
\end{figure}

We now show that for $n \geq 3$ and any $x \in \E'_{n-1}$ the subgraphs $C(x)$ are $C(\til{x})$ are also connected components of $\Gamma_e(\A_{n+1})$ with diameter three. Recall that special elements are given by Definition~\ref{definition:special-elements}.

\begin{theorem} \label{theorem:nonspecial-components}
Let $x \in \E'_{n-1}$. The structure of the subgraphs $C(x)$ are $C(\til{x})$ is depicted in Fig.~\ref{figure:nonspecial-components} for various values of $\chi$. Here $a,b,c,d,x \in \E'_{n-1}$ are linearly independent, $ab = n(b)x$ and $cd = \pm n(c)x$. A dashed edge connects two vertices in $C(ab)$ if and only if $(a,b)$ and $(c,d)$ are orthogonal in $\A_n^{\circ}$, while a dotted edge connects two vertices in $C(\til{ab})$ if and only if $(a,b)$ and $(c,d)$ are orthogonal in $\A_n^{\bullet}$, see Lemma~\ref{lemma:distinct-nonspecial-element}.
\end{theorem}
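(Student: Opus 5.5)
The plan is to enumerate the vertex set of $C(x)$ and $C(\til{x})$ and then read off all edges from Table~\ref{table:orthogonality-conditions}.

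\emph{Vertices.} By Remark~\ref{remark:component-types}, the vertices of $C(x)$ (type~\ref{item:component-type-3}) are $(x, e_0)$, $(\til{x}, \til{e}_0)$, $(\til{e}_0, \til{x})$, and the pairs $(a,b)$, $(\til{a},\til{b})$ with $a,b \in \pm\E'_{n-1}$ and $ab = \pm x$. The vertices of $C(\til{x})$ (type~\ref{item:component-type-4}) are $(\til{x}, e_0)$, $(x,\til{e}_0)$, $(\til{e}_0, x)$, and the pairs $(a,\til{b})$, $(\til{a}, b)$. Corollary~\ref{corollary:zero-divisors-criterion} decides which of these are zero divisors: for $n \geq 3$ only the cases with a component $\pm e_0$ and $\chi=-1$ are excluded. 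Here $\chi = \gamma_n n(x)$ on $C(x)$ and $\chi = \gamma_n n(\til{x}) = -\gamma_{n-1}\gamma_n n(x)$ on $C(\til{x})$, by Proposition~\ref{proposition:chi-value}. Since $ab = n(b)x$ fixes the sign of the second component once the first is fixed, the vertices can be grouped into quadruples $\{(a,b), (b,\gamma_n a), (\til{a},\til{b}), (\til{b},\gamma_n\til{a})\}$ for $C(x)$, and into the analogous quadruples of Lemma~\ref{lemma:distinct-nonspecial-element}(2) for $C(\til{x})$. Each quadruple is indexed by an unordered pair $\{a,b\}$ with $ab = \pm x$. This matches the layout of the figure.

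\emph{Edges, four cases.} The four cases are $C(x)$ or $C(\til{x})$, each with $\chi = \pm 1$. The only special elements are:
\begin{itemize}
\item $(x, e_0)$ in $C(x)$ when $\chi = 1$;
\item $(\til{x}, e_0)$ in $C(\til{x})$ when $\chi = 1$;
\item $(\til{x},\til{e}_0)$ and $(\til{e}_0, \gamma_n\til{x})$ in $C(x)$ when $\chi=-1$;
\item $(x,\til{e}_0)$ and $(\til{e}_0,\gamma_n x)$ in $C(\til{x})$ when $\chi=-1$.
\end{itemize}

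For edges at special and near-special vertices, I use rows \ref{theorem:e_0-special-element} and \ref{theorem:til-e_0-special-element} of the table. They give the hub adjacencies: every nonspecial vertex with the correct product sign is joined to the hub(s). When $\chi=1$, they also give the path $(\til{c},-\gamma_{n-1}e_0)-(c,\til{e}_0)-(\til{e}_0,\gamma_n c)-(\til{c},\gamma_{n-1}e_0)$, which attaches the nonspecial $\til{e}_0$-vertices to the $e_0$-hub.

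For edges inside a quadruple and between quadruples, I use row \ref{theorem:common-nonspecial-element}. Inside a quadruple, $(a,b)$ is adjacent to $(b,\gamma_n a)$ when $\chi=1$, and to the tilde pair when $\chi=-1$. Between distinct quadruples, Lemma~\ref{lemma:distinct-nonspecial-element} says adjacency is equivalent to orthogonality of $(a,b)$ and $(c,d)$ in $\A_n^{\circ}$ (resp.\ $\A_n^{\bullet}$). This gives the dashed and dotted edges, and it is uniform over all elements of the quadruple. I would use the sign relation $ab = \pm cd$ from Lemma~\ref{lemma:basis-pairs-zero-divisors-relation} to say exactly which representatives each hub sees.

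\emph{Main obstacle: the sign bookkeeping.} Each adjacency must be matched with the precise element in the figure. This means tracking:
\begin{itemize}
\item the identity $\til{\til{a}} = \gamma_{n-1}a$;
\item the normalization of the first component to a positive sign;
\item the convention that $(\til{b},\gamma_n a)$ represents the same line as other listed forms.
\end{itemize}
For $C(\til{x})$ with $\chi=-1$, the hub condition $a \perp \mathbb{H}_c$ must also be checked for every vertex. I would handle this by verifying one representative per orbit and then transporting the result via Proposition~\ref{proposition:shift-equivalence}. Finally, I would check that no edges were missed, i.e.\ that the table rows exhaust all pairs in the component.
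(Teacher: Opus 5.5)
Your plan is correct and is in substance the paper's proof. Hub adjacencies come from Theorem~\ref{theorem:e_0-special-element} and Theorem~\ref{theorem:til-e_0-special-element}, edges and non-edges inside each group of four come from Theorem~\ref{theorem:common-nonspecial-element}, and the dashed/dotted edges come from Lemma~\ref{lemma:distinct-nonspecial-element}.

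The one difference is how the hexagons are obtained. The paper feeds a single hub adjacency, e.g.\ $(x,e_0)\perp(a,b)$, into Theorem~\ref{theorem:A_n-double-hexagon}. That theorem outputs the whole hexagon with correctly signed vertices $(b,\gamma_n a)$, $(x,-e_0)$, \dots; the paper doubles it via Proposition~\ref{proposition:shift-equivalence} when $\chi=-1$ and uses the table only to rule out chords. You read every edge directly off the table, which the paper itself says is possible. The cost is that you must do by hand the sign bookkeeping that the hexagon theorem absorbs.

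Two small corrections.
\begin{itemize}
\item Lemma~\ref{lemma:basis-pairs-zero-divisors-relation} gives only $ab=\pm cd$, so it cannot decide which hub a vertex sees. That is decided by the exact conditions $ab=n(b)c$ and $ab=\gamma_{n-1}n(b)\til{c}$ in the table, together with $\til{a}\til{b}=-n(\til{e}_0)ab$ and $a\til{b}=-\til{ab}$ from Lemma~\ref{lemma:octonionic-subalgebra}. This is how the paper shows that for $\chi=1$ the element $(\til{a},\til{b})$ is adjacent to $(x,-e_0)$, not to $(x,e_0)$.
\item The groups built on $(a,b)$ and on $(a,-b)$ are distinct, but they fall outside Lemma~\ref{lemma:distinct-nonspecial-element}, whose linear-independence hypothesis fails for them. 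Their non-adjacency has to be read from Theorem~\ref{theorem:common-nonspecial-element}.
\end{itemize}
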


\begin{figure}[h!]
\centering
\includegraphics[width=\linewidth]{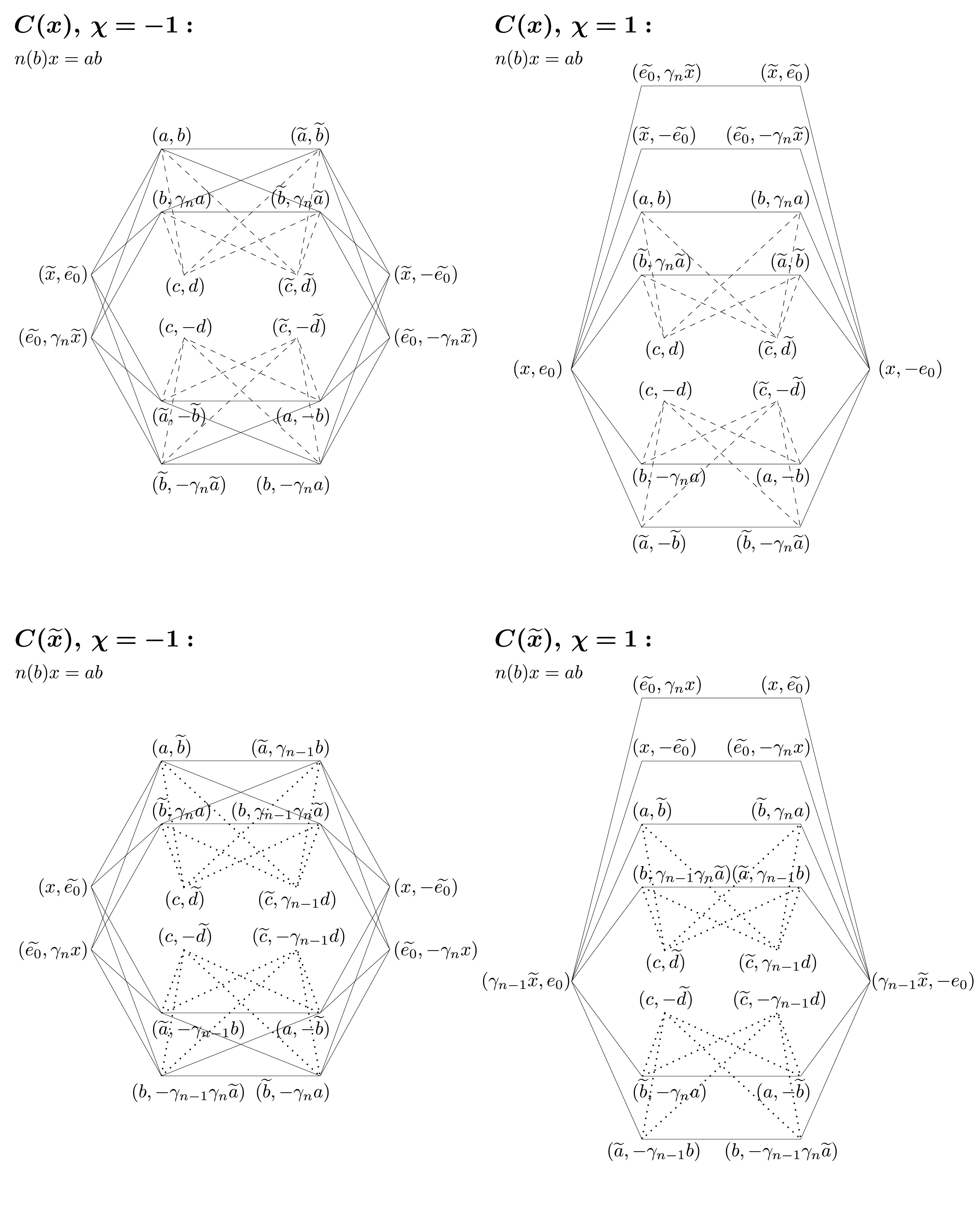}
\caption{\label{figure:nonspecial-components} Connected components of types~\ref{item:component-type-3} and~\ref{item:component-type-4} for various values of $\chi$.}
\end{figure}

\begin{proof}
We consider all cases independently. Note that if $a,b \in \pm \E'_{n-1}$ are such that $(a,b) \in C(x)$ or $(a,\til{b}) \in C(\til{x})$, then $ab = \pm x \neq \pm e_0$. Hence $e_0,a,b$ are linearly independent, and thus $a$ and $b$ satisfy the conditions of Lemma~\ref{lemma:octonionic-subalgebra}, that is, $\til{e}_0,a,b$ form an octonionic subalgebra $\mathbb{O}_{a,b}$.
\begin{enumerate}
    \item Let $\chi = -1$ for $C(x)$. By Theorem~\ref{theorem:til-e_0-special-element}, $(\til{x},\til{e}_0)$ and $(\til{e}_0, \gamma_n \til{x})$ are orthogonal to those $(a,b) \in Z'_e(\A_{n+1})$ which satisfy $a \perp \Lin(e_0, x, \til{e}_0, \til{x})$ and $ab = \gamma_{n-1}n(b)\til{\til{x}} = (\gamma_{n-1})^2 n(b)x = n(b)x$. We now assume that $a,b \in \pm \E'_{n-1}$ and $ab = n(b)x$, so $(\til{x},\til{e}_0)$ and $(a,b)$ are indeed orthogonal. We apply Theorem~\ref{theorem:A_n-double-hexagon} to $(\til{x},\til{e}_0)$ and $(a,b)$ and obtain immediately the following hexagon:
    $$
    (\til{x},\til{e}_0) \leftrightarrow (a,b) \leftrightarrow (\til{a},\til{b}) \leftrightarrow (\til{x},-\til{e}_0) \leftrightarrow (a,-b) \leftrightarrow (\til{a},-\til{b}) \leftrightarrow (\til{x},\til{e}_0).
    $$
    Since $\chi = -1$ and all our elements are doubly pure, by Proposition~\ref{proposition:shift-equivalence}, we can say that any $(A,B)$ has the same neighbours as $(B,\gamma_n A)$. Hence we obtain a double hexagon consisting of $(\til{x},\pm \til{e}_0)$, $(\til{e}_0,\pm \gamma_n\til{x})$, $(a, \pm b)$, $(\til{a}, \pm \til{b})$, $(b, \pm \gamma_n a)$, and $(\til{b}, \pm \til{a})$.
    
    Observe that we could construct this double hexagon by using Theorem~\ref{theorem:til-e_0-special-element} and~\ref{theorem:common-nonspecial-element}. Namely, we could find explicitly which pairs of its vertices are adjacent, and which are not.
    
    Let now $a,b,c,d \in \E'_{n-1}$ be linearly independent. We apply Lemma~\ref{lemma:distinct-nonspecial-element} which states that any element of $\{ (a, b), (b, \gamma_n a), (\til{a}, \til{b}), (\til{b}, \gamma_n \til{a}) \}$ is orthogonal to any element of $\{ (c, d), (c, \gamma_n d), (\til{c}, \til{d}), (\til{c}, \gamma_n \til{d}) \}$ if and only if $(a,b)$ and $(c,d)$ are orthogonal in~$\A_{n}^{\circ}$. We depict these edges by dashed lines.
    
    Note that $cd = \pm ab$, and thus $cd = \pm n(c)x$. Hence $(c,d)$ is orthogonal either to $(\til{x},\til{e}_0)$ or to $(\til{x},-\til{e}_0)$. Then we may obtain a double hexagon similar to that which contains $(a,b)$, where $(c,d)$ plays the role either of $(a,b)$ or of $(a,-b)$.
    
    \item Let $\chi = 1$ for $C(x)$. By Theorem~\ref{theorem:e_0-special-element}, $(x,e_0)$ is orthogonal to those $(a,b) \in Z'_e(\A_{n+1})$ which satisfy $ab = n(b)x$. Let now $a,b \in \pm \E'_{n-1}$ and $ab = n(b)x$. We apply Theorem~\ref{theorem:A_n-double-hexagon} to $(x, e_0)$ and $(a,b)$ and obtain the following hexagon:
    $$
    (x,e_0) \leftrightarrow (a,b) \leftrightarrow (b,\gamma_n a) \leftrightarrow (x,-e_0) \leftrightarrow (a,-b) \leftrightarrow (b,-\gamma_n a) \leftrightarrow (x,e_0).
    $$
    Note also that, by Lemma~\ref{lemma:octonionic-subalgebra},
    $$
    \til{a}\til{b} = -n(\til{e}_0)ab = -n(\til{e}_0)n(b)x = -n(\til{b})x,
    $$
    so $(\til{a},\til{b})$ is orthogonal to $(x, -e_0)$. Similarly, we obtain the hexagon
    $$
    (x,-e_0) \leftrightarrow (\til{a},\til{b}) \leftrightarrow (\til{b},\gamma_n \til{a}) \leftrightarrow (x,e_0) \leftrightarrow (\til{a},-\til{b}) \leftrightarrow (\til{b},-\gamma_n \til{a}) \leftrightarrow (x,-e_0).
    $$
    It follows from Theorem~\ref{theorem:common-nonspecial-element} that there are no other edges between the vertices of these two hexagons.
    
    Let $a,b,c,d \in \E'_{n-1}$ be linearly independent. By Lemma~\ref{lemma:distinct-nonspecial-element}, any element of $\{ (a, b), (b, \gamma_n a), (\til{a}, \til{b}), (\til{b}, \gamma_n \til{a}) \}$ is orthogonal to any element of $\{ (c, d), (c, \gamma_n d), (\til{c}, \til{d}),$ $(\til{c}, \gamma_n \til{d}) \}$ if and only if $(a,b)$ and $(c,d)$ are orthogonal in~$\A_{n}^{\circ}$.
    
    Since $cd = \pm n(c)x$, the element $(c,d)$ is orthogonal either to $(x,e_0)$ or to $(x,-e_0)$. Hence we may obtain two hexagons similar to those which contain $(a,b)$.
    
    We use Theorem~\ref{theorem:til-e_0-special-element} to obtain the hexagon which contains $(\til{x},\til{e}_0)$:
    \begin{align*}
    (x,-e_0) &\leftrightarrow (\til{x},\til{e}_0) \leftrightarrow (\til{e}_0,\gamma_n \til{x}) \leftrightarrow (x,e_0)\\
    &\leftrightarrow (\til{x},-\til{e}_0) \leftrightarrow (\til{e}_0,-\gamma_n \til{x}) \leftrightarrow (x,-e_0).
    \end{align*}
    The elements $(\til{x},\pm \til{e}_0)$ and $(\til{e}_0,\pm \gamma_n \til{x})$ are connected to no other vertices of $C(x)$.
    
    \item Let $\chi = -1$ for $C(\til{x})$. Assume that $a,b \in \pm \E'_{n-1}$ are such that $(x,\til{e}_0)$ and $(a,\til{b})$ are orthogonal. By Theorem~\ref{theorem:til-e_0-special-element}, this is equivalent to the fact that $a \perp \Lin(e_0, x, \til{e}_0, \til{x})$ and $-\til{ab} = a\til{b} = \gamma_{n-1}n(\til{b})\til{x} = \gamma_{n-1}n(b)n(\til{e}_0)\til{x} = -(\gamma_{n-1})^2 n(b)\til{x} = -n(b)\til{x}$, that is, $ab = n(b)x$. We apply Theorem~\ref{theorem:A_n-double-hexagon} to $(x,\til{e}_0)$ and $(a,\til{b})$ and obtain the following hexagon:
    $$
    (x,\til{e}_0) \leftrightarrow (a,\til{b}) \leftrightarrow (\til{a},\gamma_{n-1}b) \leftrightarrow (x,-\til{e}_0) \leftrightarrow (a,-\til{b}) \leftrightarrow (\til{a},-\gamma_{n-1}b) \leftrightarrow (x,\til{e}_0).
    $$
    
    The rest of the proof is similar to the case of $C(x)$ for $\chi = -1$, with the only exception. Let $a,b,c,d \in \E'_{n-1}$ be linearly independent. By Lemma~\ref{lemma:distinct-nonspecial-element}, any element of $\{ (a, \til{b}), (\til{b}, \gamma_n a), (\til{a}, \gamma_{n-1} b), (b, \gamma_{n-1} \gamma_n \til{a}) \}$ is orthogonal to any element of $\{ (c, \til{d}), (\til{d}, \gamma_n c), (\til{c}, \gamma_{n-1} d), (d, \gamma_{n-1} \gamma_n \til{c}) \}$ if and only if $(a,b)$ and $(c,d)$ are orthogonal in~$\A_{n}^{\bullet}$. We depict these edges by dotted lines.
    
    \item Let $\chi = 1$ for $C(\til{x})$. Assume that $a,b \in \pm \E'_{n-1}$ are such that $(\gamma_{n-1}\til{x},e_0)$ and $(a,\til{b})$ are orthogonal. By Theorem~\ref{theorem:e_0-special-element}, this is equivalent to the fact that $-\til{ab} = a\til{b} = n(\til{b})(\gamma_{n-1}\til{x}) = -n(b)\til{x}$, that is, $ab = n(b)x$. We apply Theorem~\ref{theorem:A_n-double-hexagon} to $(\gamma_{n-1}\til{x},e_0)$ and $(a,\til{b})$ and obtain the following hexagon:
    \begin{multline*}
    (\gamma_{n-1}\til{x},e_0) \leftrightarrow (a,\til{b}) \leftrightarrow (\til{b},\gamma_n a) \leftrightarrow (\gamma_{n-1}\til{x},-e_0) \leftrightarrow\\
    \leftrightarrow (a,-\til{b}) \leftrightarrow (\til{b},-\gamma_n a) \leftrightarrow (\gamma_{n-1}\til{x},e_0).
    \end{multline*}
    Note also that, by Lemma~\ref{lemma:octonionic-subalgebra},
    $$
    \til{a}(\gamma_{n-1}b) = -\gamma_{n-1} \til{ab} = n(b)(-\gamma_{n-1} \til{x}).
    $$
    Then, by Theorem~\ref{theorem:e_0-special-element}, $(\til{a},\gamma_{n-1}b)$ is orthogonal to $(\gamma_{n-1} \til{x}, -e_0)$. Similarly, we obtain the hexagon
    \begin{multline*}
    (\gamma_{n-1}\til{x},-e_0) \leftrightarrow (\til{a},\gamma_{n-1}b) \leftrightarrow (b,\gamma_{n-1}\gamma_n \til{a}) \leftrightarrow (\gamma_{n-1}\til{x},e_0) \leftrightarrow\\
    \leftrightarrow (\til{a},-\gamma_{n-1}b) \leftrightarrow (b,-\gamma_{n-1}\gamma_n \til{a}) \leftrightarrow (\gamma_{n-1}\til{x},-e_0).
    \end{multline*}
    
    We use Theorem~\ref{theorem:til-e_0-special-element} to obtain the hexagon which contains $(x,\til{e}_0)$:
    \begin{multline*}
    (\gamma_{n-1}\til{x},-e_0) \leftrightarrow (x,\til{e}_0) \leftrightarrow (\til{e}_0,\gamma_n x) \leftrightarrow (\gamma_{n-1}\til{x},e_0) \leftrightarrow\\
    \leftrightarrow (x,-\til{e}_0) \leftrightarrow (\til{e}_0,-\gamma_n x) \leftrightarrow (\gamma_{n-1}\til{x},-e_0).
    \end{multline*}
    
    Let now $a,b,c,d \in \E'_{n-1}$ be linearly independent. Then, by Lemma~\ref{lemma:distinct-nonspecial-element}, any element of $\{ (a, \til{b}), (\til{b}, \gamma_n a), (\til{a}, \gamma_{n-1} b), (b, \gamma_{n-1} \gamma_n \til{a}) \}$ is orthogonal to any element of $\{ (c, \til{d}), (\til{d}, \gamma_n c), (\til{c}, \gamma_{n-1} d), (d, \gamma_{n-1} \gamma_n \til{c}) \}$ if and only if $(a,b)$ and $(c,d)$ are orthogonal in~$\A_{n}^{\bullet}$. \qedhere
\end{enumerate}
\end{proof}

\begin{remark}
The elements $(d,\pm \gamma_n c)$ and $(\til{d}, \pm \gamma_n \til{c})$, and also $(\til{d},\pm \gamma_n c)$ and $(d, \pm \gamma_{n-1} \gamma_n \til{c})$ are not depicted in $C(x)$ and $C(\til{x})$, respectively. However, one may determine their nonspecial neighbours from Lemma~\ref{lemma:distinct-nonspecial-element}.

We obtain by analogy with $(a,\pm b)$ and $(\til{a}, \pm \til{b})$ that, in case of $C(x)$, $(c,d)$ and $(\til{c},-\til{d})$ are connected to special elements on one side, while $(c,-d)$ and $(\til{c},\til{d})$ are connected to special elements on the other side. In case of $C(\til{x})$, $(c,\til{d})$ and $(\til{c},-\gamma_{n-1} d)$ are connected to special elements on one side, while $(c,-\til{d})$ and $(\til{c},\gamma_{n-1} d)$ are connected to special elements on the other side, again by analogy with $(a,\pm \til{b})$ and $(\til{a}, \pm \gamma_{n-1}b)$.
\end{remark}

\begin{corollary} \label{corollary:diameters}
If $n \geq 3$, then the diameters of $C(x)$ and $C(\til{x})$ are equal to three.
\end{corollary}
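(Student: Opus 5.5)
The plan is to read both bounds off the structure established in Theorem~\ref{theorem:nonspecial-components}. I treat the four cases ($C(x)$ or $C(\til{x})$, with $\chi=\pm1$) in parallel. In each case the special elements of Definition~\ref{definition:special-elements} act as hubs: $(x,\pm e_0)$ when $\chi(C(x))=1$, $(\til{x},\pm\til{e}_0)$ and $(\til{e}_0,\pm\gamma_n\til{x})$ when $\chi(C(x))=-1$, and the analogous $(x,\pm\til{e}_0)$, $(\gamma_{n-1}\til{x},\pm e_0)$ for $C(\til{x})$.

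\textbf{Upper bound $d\le 3$.}
\begin{enumerate}
\item By Theorem~\ref{theorem:e_0-special-element} and Theorem~\ref{theorem:til-e_0-special-element}, every nonspecial vertex is adjacent to one special hub on its ``side''. In the $\chi=1$ case of $C(x)$, $(a,b)$ is adjacent to $(x,e_0)$ and $(a,-b)$ is adjacent to $(x,-e_0)$.
\item In the same case, the extra hexagon through $(\til{x},\til{e}_0)$ shows that $(\til{x},\pm\til{e}_0)$ and $(\til{e}_0,\pm\gamma_n\til{x})$ are adjacent to both $(x,e_0)$ and $(x,-e_0)$.
\item Hence any two hubs of opposite sign are at distance at most $2$, and any two nonspecial vertices on the same side are at distance $2$ via the common hub.
\item For nonspecial $u,v$ on opposite sides, write $u=(a,b)$ in the hexagon of Theorem~\ref{theorem:A_n-double-hexagon}. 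This hexagon contains a neighbour $(b,\gamma_n a)$ lying on the other side. The path $u - (b,\gamma_n a) - \text{hub} - v$ then has length $3$.
\item The remaining pairs (special to nonspecial, and the isolated-looking $(\til{x},\pm\til{e}_0)$ type vertices) are handled the same way, in the same hexagon picture.
\item The $\chi=-1$ cases use the double hexagon. There each vertex has the same neighbours as its shift $(B,\gamma_nA)$ by Proposition~\ref{proposition:shift-equivalence}, so the same three-step routing applies.
\end{enumerate}

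\textbf{Lower bound $d\ge3$.} It suffices to exhibit two vertices at distance exactly three.
\begin{enumerate}
\item For $\chi=1$ in $C(x)$, take $(a,b)$ and $(a,-b)$. Both are nonspecial, and their special neighbours are $(x,e_0)$ and $(x,-e_0)$ respectively, which differ.
\item They are not adjacent, since $ab\ne-ab$ (a solid or dashed edge would require consistency with Lemma~\ref{lemma:basis-pairs-zero-divisors-relation}).
\item A common neighbour would have to be nonspecial. By Lemma~\ref{lemma:distinct-nonspecial-element}, a nonspecial neighbour $(c,d)$ with $c,d$ outside $\mathbb{O}_{a,b}$ is adjacent to $(a,b)$ iff $(a,b)\perp(c,d)$ in $\A_n^{\circ}$. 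The same criterion then applies to $(a,-b)$ versus $(c,\pm d)$, with the sign controlled by Theorem~\ref{theorem:A_n-double-hexagon}.
\item The remaining candidates lie inside the octonionic hexagons, where Table~\ref{table:orthogonality-conditions} lists all neighbours explicitly and shows no common one.
\item Similar antipodal pairs from the hexagons give the lower bound in the other three cases.
\end{enumerate}

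The main obstacle is excluding a common neighbour of the antipodal pair coming through the dashed or dotted edges of Lemma~\ref{lemma:distinct-nonspecial-element}. My approach is to use a sign/side argument: the side of a vertex, meaning which of the two hubs it meets, is a well-defined label. A dashed edge must connect vertices whose component products are related as in Lemma~\ref{lemma:basis-pairs-zero-divisors-relation}. The remark after Theorem~\ref{theorem:nonspecial-components} describes how sides are arranged, and from it I would aim to show that a vertex adjacent via such an edge to $(a,b)$ cannot also be adjacent to $(a,-b)$. If this bookkeeping fails in some case, I would instead pick the antipodal pair as a hub and a nonspecial vertex far from it, e.g.\ $(\til{x},\til{e}_0)$ versus $(a,b)$ when $\chi(C(x))=1$. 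For that pair non-adjacency and the absence of common neighbours follow directly from Theorem~\ref{theorem:til-e_0-special-element}, since that hub's only neighbours are $(x,\pm e_0)$ and its hexagon.
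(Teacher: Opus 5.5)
Your upper-bound routing (one step to the nearest special element, then around a hexagon or double hexagon) is the paper's. However, steps 2--3 of it are false. In the $\chi=1$ case of $C(x)$, each of $(\til{x},\pm\til{e}_0)$ and $(\til{e}_0,\pm\gamma_n\til{x})$ is adjacent to \emph{exactly one} of $(x,\pm e_0)$. Theorem~\ref{theorem:nonspecial-components} gives the hexagon $(x,-e_0)\leftrightarrow(\til{x},\til{e}_0)\leftrightarrow(\til{e}_0,\gamma_n\til{x})\leftrightarrow(x,e_0)\leftrightarrow\cdots$, and $(\til{x},\til{e}_0)$ has no further neighbours. So special elements of opposite sign are \emph{not} at distance at most two. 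By Theorem~\ref{theorem:e_0-special-element}, $(x,e_0)$ is adjacent exactly to the $(a,b)$ with $ab=n(b)x$, and $(x,-e_0)$ exactly to those with $ab=-n(b)x$. The two are therefore non-adjacent, have disjoint neighbourhoods, and lie at distance exactly three. The bound $d\le 3$ survives if you join opposite special elements through a hexagon, which requires nonspecial vertices (available since $n\ge 3$).

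The real gap is the lower bound, which is the substantive half of the statement. First, non-adjacency of $(a,b)$ and $(a,-b)$ does not follow from Lemma~\ref{lemma:basis-pairs-zero-divisors-relation}: that lemma only forces $ab=\pm cd$, and $(c,d)=(a,-b)$ satisfies this. Second, excluding a common neighbour is explicitly left open. Third, your fallback rests on the false adjacency above: if $(\til{x},\til{e}_0)$ really were adjacent to both $(x,\pm e_0)$, then $(\til{x},\til{e}_0)\leftrightarrow(x,e_0)\leftrightarrow(a,b)$ would give distance two. (It is also not a special element when $\chi=1$.)

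The paper closes this in one line using special elements on opposite sides: $(x,\pm e_0)$ or $(\til{x},\pm\til{e}_0)$ in $C(x)$, and $(\gamma_{n-1}\til{x},\pm e_0)$ or $(x,\pm\til{e}_0)$ in $C(\til{x})$. Theorem~\ref{theorem:e_0-special-element} and~\ref{theorem:til-e_0-special-element} describe their neighbourhoods by product conditions that differ only by a sign. Hence such pairs are non-adjacent with disjoint neighbourhoods, and lie at distance exactly three.

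Your own antipodal pair can also be saved directly. Suppose $(c,d)$ were a common neighbour, so $(a,b)(c,d)=(a,-b)(c,d)=0$. Adding gives $(a,0)(c,d)=(ac,da)=0$, which is impossible for basis elements. Moreover, $(a,b)(a,-b)$ has second component $-2ba\neq 0$. This works uniformly in all four cases, using $(a,\pm\til{b})$ in $C(\til{x})$.
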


\begin{proof}
It follows from Theorem~\ref{theorem:nonspecial-components} that every nonspecial element $A$ belongs to a hexagon (or a double hexagon) whose two opposite vertices (or pairs of vertices) are special. For $\chi = 1$ we have a hexagon, and for $\chi = -1$ we have a double hexagon. Hence $A$ lies at distance one from one special element (or a pair of elements), and at distance two from the other special element (or a pair of elements).

To connect nonspecial elements $A$ and $B$, we first choose the nearest special element to~$A$ (at distance one), and then proceed to $B$ (at distance at most two), hence $d(A,B) \leq 3$. Since $n \geq 3$, Corollary~\ref{corollary:zero-divisors-criterion} implies that nonspecial zero divisors do exist, so we may connect any two special elements by a path of length at most three. It remains to note that the distance between special elements on the opposite sides is exactly three.
\end{proof}

\begin{corollary} \label{corollary:retrieve-n-and-chi}
If $n \geq 3$, then $\Gamma_e(\A_{n+1})$ contains either $2^n$ or $2^n - 1$ connected components for $\gamma_n = 1$ and $\gamma_n = -1$, respectively. Let $C$ be an arbitrary connected component of $\Gamma_e(\A_{n+1})$. Then the diameter of $C$ equals three. Moreover, we can find the number of vertices in $C$ as follows:
$$
|V(C)| =
\begin{cases}
2^{n+1}-2, & \chi(C) = 1;\\
2^{n+1}-4, & \chi(C) = -1.
\end{cases}
$$
\end{corollary}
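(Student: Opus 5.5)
We first partition the vertex set of $\Gamma_e(\A_{n+1})$ into the subgraphs $C(z)$, $z \in \E_n$, and then count. By Lemma~\ref{lemma:basis-pairs-zero-divisors-relation}, every vertex $(a,b)$ lies in exactly one $C(z)$, namely the one with $ab = \pm z$, and there are no edges between distinct $C(z)$. For $z = x \in \E'_{n-1}$ or $z = \til{x}$ with $x \in \E'_{n-1}$, Theorem~\ref{theorem:nonspecial-components} together with Corollary~\ref{corollary:diameters} shows that $C(z)$ is a single nonempty connected component of diameter three. This gives $2(2^{n-1}-1) = 2^n - 2$ components. By Lemma~\ref{lemma:special-components}, $C(\til{e}_0)$ is always a nonempty connected component of diameter three, while $C(e_0)$ is empty exactly when $\gamma_n = -1$ and is otherwise a connected component of diameter three. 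Adding these up gives $2^n$ components for $\gamma_n = 1$ and $2^n - 1$ for $\gamma_n = -1$. The diameter claim for an arbitrary component then follows directly from the same two results.

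For the vertex count, we use Corollary~\ref{corollary:zero-divisors-criterion} (valid since $n \ge 3$) to enumerate the vertices in each class of Remark~\ref{remark:component-types}. Recall that the sign of the first component is fixed, and that $\chi(C(z)) = \gamma_n n(z)$ by Proposition~\ref{proposition:chi-value}.

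For $C(e_0)$, the vertices are $(a,\pm a)$ with $a \in \E'_n$, present only when $\chi = \gamma_n = 1$. This gives $2(2^n-1) = 2^{n+1}-2$ vertices.

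For $C(\til{e}_0)$, the candidates are $(a, \pm\til{a})$ and $(\til{a}, \pm a)$ with $a \in \E'_{n-1}$, together with $(\til{e}_0, \pm e_0)$. That is $2\cdot 2(2^{n-1}-1) + 2 = 2^{n+1}-2$ candidates. If $\chi = -1$, the exclusion $b = \pm e_0$, $\gamma_n n(a) = -1$ in Corollary~\ref{corollary:zero-divisors-criterion} removes $(\til{e}_0,\pm e_0)$, because $n(\til{e}_0) = -\gamma_{n-1}$ and $\chi = -\gamma_{n-1}\gamma_n$. This leaves $2^{n+1}-4$; the count agrees with Fig.~\ref{figure:special-components}.

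For $C(x)$ with $x \in \E'_{n-1}$, the ordered pairs $(a,b)$ of basis elements with $ab = \pm x$ are as follows. Inside $\E_{n-1}$ there are $(x,e_0)$, $(e_0,x)$ and the $2^{n-1}-2$ ordered pairs $(a,b)$ of imaginary units with $ab=\pm x$. The tilded versions $(\til{a},\til{b})$, $(\til{x},\til{e}_0)$ and $(\til{e}_0,\til{x})$ contribute the same number. Removing $(e_0,x)$, since the first component must be pure, leaves $2(2^{n-1}+1) - 1 = 2^n + 1$ ordered pairs. Each has two signs, giving $2^{n+1}+2$ potential vertices. We then apply the exclusions of Corollary~\ref{corollary:zero-divisors-criterion}, with $\chi = \gamma_n n(x)$:
\begin{itemize}
\item If $\chi = -1$, the pairs $(x,\pm e_0)$ are not zero divisors, which leaves $2^{n+1}$. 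Comparing with the double-hexagon picture in Theorem~\ref{theorem:nonspecial-components}, which has no vertex $(x, \pm e_0)$, we expect one more pair of exclusions so that the count becomes $2^{n+1}-4$.
\item If $\chi = 1$, the count should similarly reduce to $2^{n+1}-2$.
\end{itemize}
The analogous count applies to $C(\til{x})$, where we read off the pairs from Remark~\ref{remark:component-types}\ref{item:component-type-4}.

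The main obstacle is this last bookkeeping. We must correctly identify which pairs in the list are counted twice: for example, $(\til{e}_0,\gamma_n \til{x})$ versus $(\til{x},\til{e}_0)$, and pairs differing only by an overall sign or by the relation $\til{\til{a}} = \gamma_{n-1}a$. We must also determine which pairs are excluded by Corollary~\ref{corollary:zero-divisors-criterion}. To settle this, we will cross-check each case against the explicit vertex lists in the proof of Theorem~\ref{theorem:nonspecial-components}. There, every nonspecial vertex is counted via the quadruples $\{(a,b),(b,\gamma_n a),(\til a,\til b),(\til b,\gamma_n\til a)\}$ with both signs, and the special vertices are added separately: $(x,\pm e_0)$ and $(\til{x},\pm\til{e}_0)$, $(\til{e}_0,\pm\gamma_n\til{x})$ when $\chi=1$, and only the latter four when $\chi=-1$. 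The nonspecial vertices number $2(2^{n}-4)$ in both cases. The special vertices then add $6$ when $\chi=1$, giving $2^{n+1}-2$, and $4$ when $\chi=-1$, giving $2^{n+1}-4$.
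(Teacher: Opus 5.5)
The component count and the diameter claim are handled as in the paper: you partition the vertices into the $C(z)$ and cite Lemma~\ref{lemma:special-components}, Theorem~\ref{theorem:nonspecial-components} and Corollary~\ref{corollary:diameters}. Your vertex counts for $C(e_0)$ and $C(\til{e}_0)$ are also correct.

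The gap is in the count for $C(x)$, $x\in\E'_{n-1}$, and it comes from an arithmetic slip.
\begin{itemize}
\item Inside $\E_{n-1}$ you list $2+(2^{n-1}-2)=2^{n-1}$ ordered pairs. The tilded pairs give another $2^{n-1}$. Removing $(e_0,x)$ leaves $2^n-1$, not $2^n+1$.
\item So there are $2^{n+1}-2$ candidate vertices, not $2^{n+1}+2$.
\item With the correct number, Corollary~\ref{corollary:zero-divisors-criterion} finishes at once. For $\chi=1$ none of its exceptions applies. For $\chi=-1$ the only applicable exception is $b=\pm e_0$, which removes exactly $(x,\pm e_0)$. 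Exception~(2) cannot occur, since $b=\pm a$ forces $ab\in\Lin(e_0)$, i.e.\ $x=e_0$. This leaves $2^{n+1}-4$.
\end{itemize}
So the ``one more pair of exclusions'' you anticipate does not exist, and for $\chi=1$ nothing needs to be removed. The double counting you name as the main obstacle does not exist either. The vertices $(\til{e}_0,\gamma_n\til{x})$ and $(\til{x},\til{e}_0)$ are distinct, since their first components differ. The sign of the first component is already fixed. First components taken from $\E_{n-1}$ and from $\{\til{y} : y\in\E_{n-1}\}$ never coincide.

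Your closing cross-check does reach the right totals for $C(x)$, but as written it has three problems:
\begin{itemize}
\item It contradicts your own earlier tally and never resolves the contradiction.
\item It assumes that the vertex lists in the proof of Theorem~\ref{theorem:nonspecial-components} exhaust $V(C(x))$, which is exactly what the enumeration was meant to establish.
\item It leaves $C(\til{x})$ as ``analogous'' without doing it.
\end{itemize}

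The clean fix is the paper's argument, which needs no case split. For a fixed $z\in\E_n$ and each $a\in\E'_n$ there is exactly one $b\in\E_n$, up to sign, with $ab=\pm z$. Hence every $C(z)$ has $2(2^n-1)=2^{n+1}-2$ candidates. If $\chi=1$, all of them are zero divisors. If $\chi=-1$, then $z\neq e_0$ (otherwise $C(e_0)$ is empty and is not a component), and only $a=z$, $b=\pm e_0$ is excluded, giving $2^{n+1}-4$. This single argument covers $C(e_0)$, $C(\til{e}_0)$, $C(x)$ and $C(\til{x})$ simultaneously.
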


\begin{proof}
We have shown in Lemma~\ref{lemma:special-components} and Theorem~\ref{theorem:nonspecial-components} that $C(x)$ is a connected component of $\Gamma_e(\A_{n+1})$ for any $x \in \E'_n$. The subgraph $C(e_0)$ is nonempty if and only if $\gamma_n = 1$, and it is also a connected component in this case. Thus we obtain the number of the connected components of $\Gamma_e(\A_{n+1})$. The values of their diameters follow from Lemma~\ref{lemma:special-components} and Corollary~\ref{corollary:diameters}.

We use Corollary~\ref{corollary:zero-divisors-criterion} to compute the number of vertices in $C(x)$, $x \in \E_n$. We are interested in such $(a,b) \in Z_e(\A_{n+1})$ that $ab = \pm x$. If $\chi = 1$, then we may take any $a \in \E'_n$ and find two corresponding values of $b$ for both $x$ and $-x$. Hence we have $2(2^n-1) = 2^{n+1}-2$ elements. If $\chi = -1$, then we can only take $a \in \E'_n \setminus \{ x \}$. Since $x \neq e_0$ in this case, there are $2(2^n-2) = 2^{n+1}-4$ choices of $(a,b)$.
\end{proof}

\begin{proposition}
For any $n \geq 1$ all connected components of $\Gamma_e(\A_{n+1})$ are Eulerian graphs.
\end{proposition}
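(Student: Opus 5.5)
By Euler's theorem, a connected graph is Eulerian if and only if every vertex has even degree. The plan is therefore to show that every vertex of $\Gamma_e(\A_{n+1})$ has even degree. Each connected component is handled separately; connectedness is given by Lemma~\ref{lemma:special-components} and Theorem~\ref{theorem:nonspecial-components} for $n \geq 3$, and by direct inspection of Figs.~\ref{figure:H2-graph} and~\ref{figure:H3-graph} (and the empty cases) for $n \leq 2$. Loops need care. A vertex with $\chi = 1$ of the form $(a,\pm a)$ or $(\til{a},\pm a)$ is orthogonal to itself by Proposition~\ref{proposition:self-orthogonality}, but edges in $\Gamma_O$ join distinct vertices, so self-orthogonality contributes nothing to the degree.

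The main tool for parity is a fixed-point-free involution on each neighbourhood. Theorem~\ref{theorem:A_n-double-hexagon} shows that the map $(c,d) \mapsto (c,-d)$ preserves orthogonality to a vertex $(a,b)$ whenever $(a,b)$ is special. More generally, the hexagon structure makes the neighbourhood of each vertex split into pairs: if $(x,y)$ is adjacent to $(c,d)$, then it is also adjacent to the opposite vertex of the appropriate hexagon. I would verify this pairing row by row from Table~\ref{table:orthogonality-conditions}.

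\textbf{Row~\ref{theorem:type-1-element}.} The vertex $(a,a)$ is adjacent to all $(b,-b)$ with $b \in \E'_n \setminus\{a\}$. This gives $2^n - 2$ neighbours, which is even.

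\textbf{Row~\ref{theorem:type-2-element}.} For $\chi = 1$, the neighbours are $(\til{e}_0,\pm e_0)$ and $(a,\pm\gamma_n\til{a})$, with the appropriate sign, counted from Fig.~\ref{figure:special-components}. For $\chi = -1$, the neighbours are $(\til{b},\mp b)$ for $b \in \E''_n \setminus \{a,\til{a}\}$. Here $b$ and $\til{b}$ give the same line up to sign, so the count is again even.

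\textbf{Special vertices.} The neighbours of $(c,e_0)$ and $(c,\til{e}_0)$ come in pairs $(a,b)$, $(b,\gamma_n a)$. Concretely, the vertices with $ab = n(b)c$ are closed under $(a,b) \mapsto (b, \gamma_n a)$, which has no fixed points because $a \neq \pm b$.

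\textbf{Nonspecial vertices.} By Proposition~\ref{proposition:shift-equivalence}, orthogonality to $(c,d)$ is equivalent to orthogonality to $(d,\gamma_n c)$, and these are distinct vertices. The involution $(c,d) \mapsto (d,\gamma_n c)$ must map the neighbour set to itself without fixed points. The delicate case is when $(d,\gamma_n c)$ equals $(x,y)$ itself, because then the partner is not a genuine neighbour. This is exactly the situation of Row~\ref{theorem:common-nonspecial-element}: $(a,b)$ is listed as orthogonal to itself and to $(b,\gamma_n a)$. I would pair the remaining neighbours by the involution and handle these exceptional vertices by hand, using the explicit hexagon and double hexagon descriptions in Theorem~\ref{theorem:nonspecial-components}.

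For $\chi = 1$, each nonspecial vertex lies in one hexagon. It has two special neighbours, namely $(x,\pm e_0)$ with one sign, together with $(b,\gamma_n a)$, plus dashed neighbours. The dashed neighbours come in blocks of four by Lemma~\ref{lemma:distinct-nonspecial-element}.

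For $\chi = -1$, the double hexagon gives degree $4$ inside it, and the dashed or dotted neighbours again come in multiples of $4$.

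The main obstacle is this bookkeeping for nonspecial vertices. One must check that the hexagon neighbours really total an even number, counting $(b,\gamma_n a)$ correctly, and that the dashed blocks from Lemma~\ref{lemma:distinct-nonspecial-element} contain four \emph{distinct} lines. For the latter I would rely on $c$, $d$, $\til{c}$, $\til{d}$ being linearly independent. Finally, I would read off the small cases $n \leq 2$ directly from the figures.
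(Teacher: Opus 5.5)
Your strategy is the paper's: show that every vertex has even degree by reading degrees off Table~\ref{table:orthogonality-conditions} and Theorem~\ref{theorem:nonspecial-components}. The dashed or dotted neighbours of a nonspecial vertex form whole blocks of four by Lemma~\ref{lemma:distinct-nonspecial-element}. Your final count for nonspecial vertices ($2+4k$ for $\chi=1$, $4+4k$ for $\chi=-1$) is fine, and so is your remark that self-orthogonality creates no loops.

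The step that fails is your treatment of the special vertex $(c,e_0)$, where $\chi=1$: neither pairing you propose preserves its neighbourhood. Theorem~\ref{theorem:A_n-double-hexagon} sends a neighbour $(c,d)$ of $(a,b)$ to $(c,-d)$. That vertex is the one \emph{opposite} to $(c,d)$ in the hexagon, and it is adjacent to $(a,-b)$, not to $(a,b)$. So $(c,d)\mapsto(c,-d)$ exchanges the neighbourhoods of $(x,e_0)$ and $(x,-e_0)$, and your principle that the opposite vertex of a neighbour is again a neighbour is false. The other pairing fails in the same way. Suppose $ab=n(b)c$ with $a,b$ pure and linearly independent. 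Then $ba=-ab$, and $\chi=1$ means $n(a)=\gamma_n n(b)$, so $b(\gamma_n a)=-\gamma_n n(b)c=-n(\gamma_n a)c$. Thus $(b,\gamma_n a)$ is a neighbour of $(c,-e_0)$. This is exactly the hexagon $(x,e_0)\leftrightarrow(a,b)\leftrightarrow(b,\gamma_n a)\leftrightarrow(x,-e_0)$ in the proof of Theorem~\ref{theorem:nonspecial-components}. Proposition~\ref{proposition:shift-equivalence} cannot be invoked here because $e_0$ is not pure.

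The repair is easy. You can use $(a,b)\mapsto(b,-\gamma_n a)$, which is the composite of your two maps. Or simply count: for each $a\in\E'_n$ with $a\neq\pm c$ there is exactly one $b\in\pm\E_n$ with $ab=n(b)c$. By Corollary~\ref{corollary:zero-divisors-criterion} each such $(a,b)$ is a zero divisor since $\chi=1$, so $\deg(c,e_0)=2^n-2$. For $(c,\til{e}_0)$ with $\chi=-1$, your involution $(a,b)\mapsto(b,\gamma_n a)$ is valid because all components are pure, and the degree is $2^n-4$.

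Two smaller slips:
\begin{itemize}
\item In row~\ref{theorem:type-2-element} of Table~\ref{table:orthogonality-conditions} with $\chi=-1$, $b$ and $\til{b}$ do not give the same line. The vertex $(\til{\til{b}},\mp\til{b})=\gamma_{n-1}(b,\mp\gamma_{n-1}\til{b})$ has first component $b\neq\pm\til{b}$. The degree is simply $|\E''_n|-2=2^n-4$.
\item A nonspecial vertex with $\chi=1$ has exactly one special neighbour, not two.
\end{itemize}
Finally, connectedness of a connected component needs no separate argument.
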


\begin{proof}
It can be easily seen from Figs.~\ref{figure:special-components} and~\ref{figure:nonspecial-components} that the degree of every vertex of $C(x)$ is even. In particular, we use here Lemma~\ref{lemma:distinct-nonspecial-element} which states that all nonspecial elements of types~\ref{item:component-type-3} and~\ref{item:component-type-4} are divided into groups of ``almost equivalent'' elements, four of them in each. This implies that the dotted or the dashed degree of every nonspecial vertex is divided by $4$.

It is also possible that $C(x)$ has an empty set of vertices or consists of two disconnected vertices, see Theorem~\ref{theorem:low-dimensional-graphs} further. However, these graphs are Eulerian as well.
\end{proof}

\section{Retrieving parameters from $\Gamma_e(\A_{n+1})$} \label{section:retrieving-parameters}

\subsection{Finding $\gamma_{n-1}$ and $\gamma_n$ for $n \geq 4$} \label{subsection:finding-last-two}

For now, our goal is to retrieve all values $\gamma_k$ of $\A_{n+1}$, $k = 0, \dots, n$, from the graph $\Gamma_e(\A_{n+1})$. For $n \geq 3$ Corollary~\ref{corollary:retrieve-n-and-chi} provides a method to determine the value of $n$ from the number of the connected components of $\Gamma_e(\A_{n+1})$. Then we can count the elements of an arbitrary connected component and find its $\chi$.

We first learn how to find special elements from Definition~\ref{definition:special-elements} in connected components of the form $C(x)$ with $x \in \E''_n$.

\begin{lemma} \label{lemma:find-special-elements}
Let $n \geq 2$ and $x \in \E''_n$.
\begin{enumerate}
    \item If $\chi = 1$, then $C(x)$ has a nonempty set of vertices.
    If $n = 2$, then $C(x)$ is a hexagon, so we can take any two opposite elements as special. If $n \geq 3$, then the special elements of $C(x)$ are uniquely defined.
    \item If $\chi = -1$, then $V(C(x))$ is nonempty if and only if $n \geq 3$. If $n = 3$, then $C(x)$ is simply a double hexagon, so we can take any two opposite pairs of elements as special. If $n \geq 4$, then the special elements of $C(x)$ are uniquely defined.
\end{enumerate}
\end{lemma}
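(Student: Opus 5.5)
We split according to $\chi$ and $n$, using the explicit descriptions already obtained. Nonemptiness comes first. By Corollary~\ref{corollary:zero-divisors-criterion}, for $\chi=1$ every $(a,b)\in\E'_n\times(\pm\E_n)$ is a zero divisor, so for $n\ge 2$ and $x\in\E''_n$ the element $(x,e_0)$ lies in $C(x)$. For $\chi=-1$ the same corollary excludes all such pairs when $n\le 2$, so $C(x)$ is then empty. For $n\ge 3$ it still permits, for instance, $(\til{x},\til{e}_0)$: neither component is $\pm$ the other and the second is not $\pm e_0$. That settles the emptiness claims.

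For the small cases, take $n=2$ and $\chi=1$. By the count in the proof of Corollary~\ref{corollary:retrieve-n-and-chi}, $C(x)$ has $2(2^2-1)=6$ vertices. Theorem~\ref{theorem:e_0-special-element} together with Theorem~\ref{theorem:A_n-double-hexagon} gives the hexagon through $(x,\pm e_0)$, so $C(x)$ is a $6$-cycle. Its automorphism group is transitive on antipodal pairs, so any opposite pair may serve as the special elements. For $n=3$ and $\chi=-1$, Theorem~\ref{theorem:nonspecial-components} (case $\chi=-1$ of $C(x)$) shows that $C(x)$ has $2^{4}-4=12$ vertices, all lying in the single double hexagon through $(\til{x},\pm\til{e}_0)$ and $(\til{e}_0,\pm\gamma_n\til{x})$. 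This double hexagon is symmetric under rotation, so again any opposite pair of pairs works.

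Uniqueness in the larger cases is the main point. I would characterize the special vertices intrinsically by their degree. For $\chi=1$ and $n\ge3$, the vertex $(x,e_0)$ is adjacent to every $(a,b)$ with $ab=n(b)x$, which is half of the roughly $2^{n+1}$ vertices. A nonspecial vertex $(a,b)$, by Theorem~\ref{theorem:common-nonspecial-element}, has neighbours consisting of itself-type partners, one special element, and nonspecial elements coming from Lemma~\ref{lemma:distinct-nonspecial-element}. Those last neighbours come in groups of four, indexed by neighbours of $(a,b)$ in $\Gamma_e(\A_n^{\circ})$ inside one component. That component has $O(2^{n})$ vertices in total, so this bound alone is not enough. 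The sharper fact is that $(a,b)$ is not orthogonal to $(c,d)$ with $cd=ab$ sign-matched in the same way; the orthogonality splits across the two sides of the hexagon. This should give degree strictly less than $2^{n}-1$, the degree of $(x,e_0)$. The vertices $(\til{x},\pm\til{e}_0)$ and $(\til{e}_0,\pm\gamma_n\til{x})$ have degree $2$ and are clearly nonspecial.

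For $\chi=-1$ and $n\ge4$ I would do the analogous comparison. The special vertices $(\til{x},\pm\til{e}_0)$ and $(\til{e}_0,\pm\gamma_n\til{x})$ are adjacent to all $(a,b)$ with $a\perp\mathbb{H}_x$ and the appropriate sign. A nonspecial vertex gets at most a bounded number of neighbours inside its own double hexagon, plus its dashed neighbours from $\A_n^{\circ}$. If degree comparison turns out to be delicate, a fallback is this: special vertices are exactly those whose neighbourhood meets every double hexagon or hexagon block. Equivalently, they are the vertices $v$ with $d(v,w)\le2$ for all $w$. Nonspecial vertices fail this condition because, once $n\ge4$ (respectively $n\ge3$), there are nonspecial vertices with $(a,b)$ and $(c,d)$ nonorthogonal in $\A_n^{\circ}$ lying on the far side, at distance $3$. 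The main obstacle is verifying that such a far vertex exists for every nonspecial vertex. This requires that $\Gamma_e(\A_n^{\circ})$ restricted to the relevant component is not complete multipartite enough to reach everything. I would check it using the diameter-three statement of Corollary~\ref{corollary:diameters} applied to $\A_n^{\circ}$ when $n-1\ge3$, and by direct inspection of Figs.~\ref{figure:H2-graph}--\ref{figure:H3-graph} for the remaining boundary case.
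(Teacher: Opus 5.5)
Your nonemptiness argument and the two small cases are fine. For $n=2$ you should still say why the $6$-cycle has no chords, e.g.\ from the rows of Table~\ref{table:orthogonality-conditions} for $(x,e_0)$ and $(\til{x},\til{e}_0)$. The gap is in the uniqueness part, where neither of your criteria works.

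For $\chi=-1$, degree cannot separate special from nonspecial vertices. In $\Main_5$ ($n=4$), a nonspecial $(a,b)\in C(x)$, $x\in\E'_3$, has $4$ neighbours in its double hexagon and $8$ dashed ones: its two shift-pairs of neighbours in the sedenion double hexagon $C^{\circ}(x)$ each give a block of four by Lemma~\ref{lemma:distinct-nonspecial-element}. So its degree is $12=2^n-4$, the same as that of the special vertices.

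Your fallback fails for a different reason: special vertices do \emph{not} satisfy $d(v,w)\le 2$ for all $w$. Special vertices are adjacent only to nonspecial ones, and every nonspecial vertex is adjacent to the special vertices of exactly one side. Hence specials on opposite sides have no common neighbour and are at distance exactly three, as used in the proof of Corollary~\ref{corollary:diameters}. The same holds for $(x,e_0)$ and $(x,-e_0)$ when $\chi=1$. So that criterion selects no vertex at all. The other version, ``its neighbourhood meets every block'', is circular, because the blocks are defined through the special vertices.

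For $\chi=1$ the justification you give is also false: orthogonality does not split across the two sides. Take a block $\{(c,d),(d,\gamma_n c),(\til{c},\til{d}),(\til{d},\gamma_n\til{c})\}$. The elements $(c,d)$ and $(\til{d},\gamma_n\til{c})$ lie on one side and the other two on the opposite side. This is because $(c,d)$ and $(d,\gamma_n c)$ sit on opposite sides of a hexagon through $(x,\pm e_0)$, and $\til{c}\til{d}=-n(\til{e}_0)cd$. So every block adjacent to $(a,b)$ supplies two common neighbours of $(a,b)$ and its special vertex. The degree inequality itself happens to be true. A simultaneous induction over both characteristics through Theorem~\ref{theorem:nonspecial-components} bounds nonspecial degrees by $2^n-6$, against $2^n-2$ for $(x,\pm e_0)$ (not $2^n-1$, since self-orthogonality gives no edge). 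But that induction is the actual content, and your proposal does not contain it.

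The paper uses different invariants.
\begin{itemize}
\item For $\chi=1$ it counts vertices at distance exactly three. A special vertex has only one, the opposite special. By contrast, $(\til{x},\til{e}_0)$ is at distance three from both $(a,b)$ and $(\til{a},-\til{b})$, and $(a,b)$ from both $(\til{x},\til{e}_0)$ and $(\til{e}_0,-\gamma_n\til{x})$. This is the correct form of your eccentricity idea.
\item For $\chi=-1$ it counts common neighbours of adjacent pairs. A special vertex and a neighbour of degree $K$ share exactly $\frac{K-4}{2}$ neighbours. A nonspecial $(a,b)$ and its neighbour $(\til{a},\til{b})$, of the same degree, share all $K-4$ neighbours outside their double hexagon by Lemma~\ref{lemma:distinct-nonspecial-element}. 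If an automorphism sent $(a,b)$ to a special vertex, then $K=2^n-4\ge 8$ and $(\til{a},\til{b})$ would go to a neighbour of that special vertex. This contradicts $K-4>\frac{K-4}{2}$.
\end{itemize}
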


\begin{proof}
Clearly, $\E''_n \neq \varnothing$ if and only if $n \geq 2$, so this condition is essential.
\begin{enumerate}
    \item Let $\chi = 1$. By Corollary~\ref{corollary:zero-divisors-criterion}, any $(a,b) \in \E'_n \times (\pm \E_n)$ with $ab = x$ is a zero divisor, and such $(a,b)$ do exist, so $V(C(x))$ is nonempty.
    
    If $n = 2$, then $\E_n = \{ e_0, x, \til{e}_0, \til{x} \}$, up to signs. Hence $C(x)$ is just a hexagon with no other edges:
    $$
    (x,-e_0) \leftrightarrow (\til{x},\til{e}_0) \leftrightarrow (\til{e}_0,\gamma_n \til{x}) \leftrightarrow (x,e_0) \leftrightarrow (\til{x},-\til{e}_0) \leftrightarrow (\til{e}_0,-\gamma_n \til{x}) \leftrightarrow (x,-e_0).
    $$
    Clearly, any two opposite elements can be regarded as special up to a graph isomorphism.
    
    If $n \geq 3$, then there exist $a,b \in \E''_n$ such that $ab = n(b)x$, and we obtain at least two more hexagons passing through $(x,e_0)$ and $(x,-e_0)$. The distance from $(x, \pm e_0)$ to any nonspecial element is at most two, so the only element at a distance three from a special element is the other special element. However, the distance from $(\til{x},\til{e}_0)$ and $(\til{e}_0,-\gamma_n{\til{x}})$ to $(a,b)$ and $(\til{a},-\til{b})$ is exactly three, and the distance from $(\til{x},-\til{e}_0)$ and $(\til{e}_0,\gamma_n{\til{x}})$ to $(a,-b)$ and $(\til{a},\til{b})$ is also three. Hence none of them can be mapped to a special element by a graph isomorphism, and thus $(x,e_0)$ and $(x,-e_0)$ are uniquely defined.
    
    \item Let $\chi = -1$. By Corollary~\ref{corollary:zero-divisors-criterion}, $V(C(x))$ is nonempty only for $n \geq 3$.
    
    If $n = 3$, then $\E_n = \{ e_0, x, \til{e}_0, \til{x}, a, b, \til{a}, \til{b} \}$, up to signs, for some $a,b$ with $ab = n(b)x$. Therefore, $C(x)$ is a double hexagon consisting of $(\til{x},\pm \til{e}_0)$, $(\til{e}_0,\pm \gamma_n\til{x})$, $(a, \pm b)$, $(\til{a}, \pm \til{b})$, $(b, \pm \gamma_n a)$, and $(\til{b}, \pm \til{a})$.    Any two opposite pairs of elements can be regarded as special up to a graph isomorphism.
    
    If $n \geq 4$, then we use the degrees of special and nonspecial elements. By Corollary~\ref{corollary:retrieve-n-and-chi}, $C(x)$ consists of $2^{n+1} - 4$ vertices. Exactly $2^{n+1} - 8$ of them are nonspecial. Every special element is connected to half of them, that is, its degree equals $2^n - 4 \geq 8$. Assume now that some neighbour, say $(a,b)$, of a special element, say $(\til{x},\til{e}_0)$, has degree $K$. Then $K - 4$ of its neighbours do not belong to the double hexagon which contains $(a,b)$ and $(\til{x},\til{e}_0)$. Half of them, that is, $\frac{K - 4}{2}$ elements, are connected to $(\til{x},\til{e}_0)$, and another half are connected to $(\til{x},-\til{e}_0)$.
    
    Assume now that some nonspecial element $(a,b)$ is mapped to a special element by a graph isomorphism. Let $K$ denote the degree of $(a,b)$. Then $K = 2^n - 4 \geq 8$. The degree of its neighbour $(\til{a},\til{b})$ is also $K$. By Lemma~\ref{lemma:distinct-nonspecial-element}, every neighbour of $(a,b)$ outside the double hexagon is also a neighbour of $(\til{a},\til{b})$, and vise versa. Hence they have $K - 4 > \frac{K - 4}{2}$ common neighbours, a contradiction. Therefore, special elements are uniquely defined. \qedhere
\end{enumerate}
\end{proof}

\begin{lemma} \label{lemma:find-special-components}
If $n \geq 3$, then we can determine $\gamma_n$ and, if $\gamma_n = 1$, find $C(e_0)$.

If $n \geq 4$, then we can also find $C(\til{e}_0)$ and thus determine $\gamma_{n-1}$.
\end{lemma}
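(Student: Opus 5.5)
We plan to recover everything from isomorphism invariants of the abstract graph $\Gamma_e(\A_{n+1})$, namely component counts, component sizes, degrees and distances. Throughout we use the component description of Lemma~\ref{lemma:special-components} and Theorem~\ref{theorem:nonspecial-components}.

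\textbf{Step 1 ($n \geq 3$): determine $\gamma_n$ and find $C(e_0)$.} By Corollary~\ref{corollary:retrieve-n-and-chi}, the number of components is $2^n$ if $\gamma_n = 1$ and $2^n-1$ if $\gamma_n=-1$. Since the total vertex count together with the component count pins down $n$, we read off $\gamma_n$ directly. If $\gamma_n=1$, every component has $\chi(C(x)) = n(x)$ by Proposition~\ref{proposition:chi-value}. We must then single out $C(e_0)$ among the components of size $2^{n+1}-2$ (those with $\chi=1$). By Theorem~\ref{theorem:type-1-element}, $C(e_0)$ is the almost complete $(N',N')$-bipartite graph on the vertices $(a,\pm a)$ with $N'=2^n-1$. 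In it every vertex has degree $2^n-1$: it is adjacent to itself (a loop we ignore) and to all $2^n-2$ vertices $(b,\mp b)$ with $b\neq a$, so its degree is $2^n-2$. We will compare this with the components $C(x)$, $x\neq e_0$, from Theorem~\ref{theorem:nonspecial-components} and Fig.~\ref{figure:special-components}. These contain special vertices $(x,\pm e_0)$ of degree about half the component, but also vertices such as $(\til{x},\til{e}_0)$ of degree $2$. Concretely, $C(e_0)$ is regular, whereas for $n\geq 3$ each $\chi=1$ component $C(x)$, $x\neq e_0$, contains vertices of degree $2$ and vertices of larger degree. Hence $C(e_0)$ is the unique regular component of size $2^{n+1}-2$, and regularity is an isomorphism invariant. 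For $x=\til{e}_0$ with $\chi=1$, Fig.~\ref{figure:special-components} likewise shows non-regularity; we only need to check the explicit picture.

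\textbf{Step 2 ($n\geq4$): find $C(\til e_0)$ and determine $\gamma_{n-1}$.} By Proposition~\ref{proposition:chi-value}, $\chi(C(\til e_0)) = \gamma_n n(\til e_0) = -\gamma_{n-1}\gamma_n$. Since $\gamma_n$ is already known, it suffices to identify $C(\til e_0)$ among the remaining components and read its $\chi$ from its size via Corollary~\ref{corollary:retrieve-n-and-chi}. We will distinguish it structurally from the components $C(x)$ with $x\in\E''_n$, using Lemma~\ref{lemma:find-special-elements}. For $n\geq 4$, each such $C(x)$ has uniquely determined special elements, which have the hexagon/double-hexagon hub structure of Theorem~\ref{theorem:nonspecial-components}. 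In particular, in the $\chi=-1$ case, any two adjacent nonspecial vertices $(a,b),(\til a,\til b)$ share $K-4$ common neighbours. By contrast, $C(\til e_0)$ has the form in Fig.~\ref{figure:special-components}. For $\gamma_{n-1}\gamma_n=1$ (so $\chi=-1$) it is essentially bipartite-like on the pairs $(\til b,\mp b)$, each adjacent to all but its partner and itself, which gives it a near-regular degree sequence of degree about $2^n$. For $\gamma_{n-1}\gamma_n=-1$ it is a bundle of hexagons through the hub $(\til e_0,\pm e_0)$, so it has exactly two vertices of high degree, and all others have degree $2$ or $4$. We will compare degree sequences: a $\chi=1$ component $C(x)$, $x\in\E''_n$, has many vertices of degree greater than $4$ for $n\geq 4$, because of the dashed edges from Lemma~\ref{lemma:distinct-nonspecial-element}. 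If necessary we will instead use the unique-hub characterization from Lemma~\ref{lemma:find-special-elements} to rule out a match.

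\textbf{Main obstacle.} The delicate point is Step 2 when $C(\til e_0)$ and some $C(x)$ share both $\chi$ and size. Dashed or dotted edges depend on orthogonality in $\A_n^{\circ}$ or $\A_n^{\bullet}$ and might be sparse, so degree arguments alone could fail. To handle this we will first try an invariant independent of those extra edges. In $C(\til e_0)$ with $\chi=1$, the two hubs are at distance $2$ and removing them leaves only isolated edges or vertices. In $C(x)$, removing the two special vertices still leaves the edges $(a,b)-(b,\gamma_n a)$, together with the hexagon through $(\til x,\til e_0)$. Similarly, in the $\chi=-1$ case we will use the common-neighbour count from the proof of Lemma~\ref{lemma:find-special-elements}. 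We expect that checking these distinctions against Fig.~\ref{figure:special-components} will be the main effort.
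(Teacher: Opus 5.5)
Your Step 1 is sound. $C(e_0)$ is $(2^n-2)$-regular; your ``$2^n-1$'' is a slip. Every other component of size $2^{n+1}-2$ has degree-$2$ vertices such as $(\til{x},\pm\til{e}_0)$ alongside hubs of degree $2^n-2\geq 6$, so regularity singles $C(e_0)$ out. The paper needs less: any component isomorphic to the known shape of $C(e_0)$ will do.

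Your $\chi=-1$ branch of Step 2 is essentially the paper's argument. There $C(\til{e}_0)$ is triangle-free and $(2^n-4)$-regular. An isomorphism onto some $C(x)$, $x\in\E''_n$, would therefore force $\deg(a,b)=K=2^n-4\geq 12$, and then the adjacent vertices $(a,b)$, $(\til{a},\til{b})$ would have $K-4\geq 8$ common neighbours. State explicitly that $K-4>0$ comes from this forced regularity, since in general $K=4$ is possible.

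\textbf{The gap: the $\chi(C(\til{e}_0))=1$ branch.} This also breaks your decision procedure for $\gamma_{n-1}$.
\begin{itemize}
\item Your claim that every $\chi=1$ component $C(x)$, $x\in\E''_n$, has many vertices of degree greater than $4$ for $n\geq 4$ is false, because dashed edges can be absent altogether.
\item In $\Hyp_5$ we have $\A^{\circ}_4=\hat{\mathbb{S}}$. Since $\Gamma_e(\hat{\mathbb{S}})$ has no edges between doubly pure vertices with linearly independent components, every $C(x)$, $x\in\E'_3$, is a bundle of $7$ hexagons through $(x,\pm e_0)$.
\item Since $\gamma_3\gamma_4=-1$, $C(\til{e}_0)$ is also a bundle of $N=7$ hexagons, so it is isomorphic to these $C(x)$. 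No graph invariant can single out $C(\til{e}_0)$ here.
\item Your hub-deletion test fails for the same reason: in both graphs only isolated edges remain after removing the hubs.
\item Searching for the bundle shape can actively mislead. In $\A_5\{-1,-1,-1,1,1\}$ one has $\gamma_3\gamma_4=1$, so $C(\til{e}_0)$ is the $\chi=-1$ bipartite component. Yet each $C(x)$, $x\in\E'_3$, is again a bundle of $7$ hexagons, because again $\A^{\circ}_4=\hat{\mathbb{S}}$.
\end{itemize}

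\textbf{The missing idea.} The test must be one-sided, and uniqueness is unnecessary in the $\chi=1$ case. Decide $\gamma_{n-1}\gamma_n$ only by whether some $\chi=-1$ component is isomorphic to the triangle-free $(2^n-4)$-regular shape.
\begin{itemize}
\item If $\gamma_{n-1}\gamma_n=1$, then $C(\til{e}_0)$ is such a component.
\item If $\gamma_{n-1}\gamma_n=-1$, then every $\chi=-1$ component is some $C(x)$ with $x\in\E''_n$, and your common-neighbour argument excludes the shape.
\end{itemize}
In the latter case, take any component isomorphic to the bundle of $N$ hexagons as $C(\til{e}_0)$. Removing one of several isomorphic components leaves isomorphic graphs, which is all that ``find'' means here and all that the later reductions need. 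This is exactly how the paper proceeds.
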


\begin{proof}
Let $n \geq 3$. By Corollary~\ref{corollary:retrieve-n-and-chi}, the number of connected components equals either $2^n$ or $2^n - 1$ for $\gamma_n = 1$ and $\gamma_n = -1$, respectively. Hence we can count them and find $\gamma_n$. If $\gamma_n = -1$, then $C(e_0)$ has an empty set of vertices. If $\gamma_n = 1$, then $C(e_0)$ is uniquely described by Lemma~\ref{lemma:special-components}, so we can simply find a component which is isomorphic to $C(e_0)$ and regard it as $C(e_0)$. We further consider $\Gamma_e(\A_{n+1})$ without $C(e_0)$.

Let now $n \geq 4$, and we want to find $C(\til{e}_0)$. It is depicted in Fig.~\ref{figure:special-components} for various values of~$\chi$. If its $\chi = -\gamma_{n-1}\gamma_n = -1$, then the degree of any element equals $2(N-1) = 2^n - 4 \geq 8$. However, any two adjacent vertices have no neighbours in common. Assume that $C(\til{e}_0)$ is isomorphic to some $C(x)$ with $x \in \E''_n$. Let $A \in V(C(\til{e}_0))$ be mapped to a special element, and $B \in V(C(\til{e}_0))$ be mapped to its nonspecial neighbour. The degree of $B$ equals $K = 2^n - 4 \geq 8$. Then, by Lemma~\ref{lemma:find-special-elements}, $A$ and $B$ have $\frac{K - 4}{2} \geq 2$ common neighbours, a contradiction.

Hence, if $\chi = -\gamma_{n-1}\gamma_n = -1$, then we can always find $C(\til{e}_0)$ as the only connected component with $\chi = -1$ but without special elements. Conversely, if such a component exists, then $\gamma_{n-1}\gamma_n = 1$. Otherwise, $\gamma_{n-1}\gamma_n = -1$, and it remains to find the component which is isomorphic to the corresponding $C(\til{e}_0)$ from Fig.~\ref{figure:special-components}. We can now determine $\gamma_{n-1}\gamma_n$, and thus we also know $\gamma_{n-1}$.
\end{proof}

\subsection{Distinguishing algebras for $n \leq 3$} \label{subsection:distinguish-low-dimensional}

We have already seen in Lemma~\ref{lemma:find-special-components} that we can find $C(\til{e}_0)$ and determine $\gamma_{n-1}$ for $n \geq 4$ only.  Corollary~\ref{corollary:retrieve-n-and-chi} provides a result on the number of connected components for $n \geq 3$ only, so we can find $\gamma_n$ for $n \geq 3$. Corollary~\ref{corollary:zero-divisors-criterion} which describes the vertices of $\Gamma_e(\A_{n+1})$ also has some exceptions for $n \leq 2$. We will see soon that we cannot even distinguish the connected components of types~\ref{item:component-type-3} and~\ref{item:component-type-4} if $n \leq 3$. Due to these reasons we should separately describe the cases when $n = 1$, $n = 2$, and $n = 3$. As we have shown in Subsection~\ref{subsection:low-dimensional-examples}, for $n = 0$ the graph $\Gamma_e(\A_{n+1})$ always has an empty set of vertices.

We will use mainly Proposition~\ref{proposition:chi-value} and the following proposition.

\begin{proposition}[{\cite[Proposition~5.10]{our_anticomm}}] \label{proposition:basis-norms}
Let $n \geq 1$. If $\gamma_k = -1$ for all $k = 0, \dots, n-1$, then $n(e^{(n)}_j) = 1$ for all $j = 0, \dots, 2^n -1$. Otherwise, $2^{n-1}$ elements of $\E_n$ have norm equal to $1$, and the other $2^{n-1}$ elements have norm equal to $-1$, whereas $n(e_0) = 1$ always holds. 
\end{proposition}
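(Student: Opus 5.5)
The plan is to argue by induction on $n$, using only the recursive norm formula $n((a,b)) = n(a) - \gamma_n n(b)$ from the proposition cited from~\cite[p.~435]{schafer}, together with the explicit description of the basis $\E_{n+1}$ in Definition~\ref{definition:A_n}. By that definition, $\E_{n+1}$ splits into two halves. The first half consists of the elements $e^{(n+1)}_j = (e^{(n)}_j, 0)$, and its norms are
$$
n\big((e^{(n)}_j,0)\big) = n\big(e^{(n)}_j\big).
$$
The second half consists of the elements $e^{(n+1)}_{j+2^n} = (0, e^{(n)}_j)$, and its norms are
$$
n\big((0,e^{(n)}_j)\big) = -\gamma_n\, n\big(e^{(n)}_j\big).
$$
So the multiset of norms on $\E_{n+1}$ is the multiset of norms on $\E_n$ together with a copy of it multiplied by $-\gamma_n$. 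Since we may assume $\gamma_k \in \{\pm 1\}$, every norm is $\pm 1$. Moreover $n(e_0) = 1$ always, because $e_0^{(n+1)} = (e_0^{(n)}, 0)$ and $n(e^{(0)}_0) = 1$.

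For the base case $n = 1$ we have $n(e_0) = 1$ and $n(e_1) = -\gamma_0$. If $\gamma_0 = -1$, both norms equal $1$; otherwise one norm is $1$ and the other is $-1$, which gives the balanced split $2^0 : 2^0$.

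For the inductive step from $n$ to $n+1$ we distinguish three cases.
\begin{enumerate}
\item If $\gamma_0 = \dots = \gamma_{n-1} = -1$ and $\gamma_n = -1$, the induction hypothesis gives that all norms on $\E_n$ equal $1$. The factor $-\gamma_n$ equals $1$, so all norms on $\E_{n+1}$ equal $1$.
\item If $\gamma_0 = \dots = \gamma_{n-1} = -1$ and $\gamma_n = 1$, all norms on $\E_n$ again equal $1$. The first half of $\E_{n+1}$ then has norm $1$ and the second half has norm $-1$, which is exactly the split $2^n : 2^n$.
\item If some $\gamma_k = -1$ fails for $k \le n-1$, the induction hypothesis gives that $\E_n$ is balanced, with $2^{n-1}$ elements of each sign. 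Multiplying all these norms by $-\gamma_n = \pm 1$ keeps the second half balanced. Hence $\E_{n+1}$ has $2^n$ elements of each sign.
\end{enumerate}

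There is no real obstacle here. The only point needing care is to check that the case split on ``all parameters equal $-1$'' matches the statement at level $n+1$. This is what the three cases above do: the first is the only one in which all of $\gamma_0,\dots,\gamma_n$ equal $-1$, and the other two both give the balanced conclusion.
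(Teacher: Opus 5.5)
Your proof is correct and complete. The recursive norm formula gives exactly $n\bigl((e^{(n)}_j,0)\bigr)=n\bigl(e^{(n)}_j\bigr)$ and $n\bigl((0,e^{(n)}_j)\bigr)=-\gamma_n\,n\bigl(e^{(n)}_j\bigr)$. Your three cases are exhaustive and match the dichotomy at level $n+1$: only case 1 has all of $\gamma_0,\dots,\gamma_n$ equal to $-1$.

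The paper does not prove this proposition; it imports it from a companion work. So there is no in-paper argument to compare with, and your induction is the natural self-contained one.

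An equivalent way to package the same idea is to unroll the recursion into a closed form. Writing $j=\sum_k j_k 2^k$ in binary, you get $n\bigl(e^{(n)}_j\bigr)=\prod_{k\colon j_k=1}(-\gamma_k)$. If some $\gamma_k=1$ with $k\le n-1$, flipping the $k$-th bit, $j\mapsto j\oplus 2^k$, is a fixed-point-free involution on the indices that reverses the sign of the norm. This gives the $2^{n-1}:2^{n-1}$ split at once and also identifies which basis elements have which norm.

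One small point of phrasing. The restriction $\gamma_k\in\{\pm1\}$ is a standing convention of the paper, not a genuine reduction without loss of generality for this statement. The isomorphism $\A_n\{\gamma_0,\dots,\gamma_{n-1}\}\cong\A_n\{\sgn(\gamma_0),\dots,\sgn(\gamma_{n-1})\}$ rescales basis elements and therefore changes their norms. For arbitrary nonzero $\gamma_k$, your argument proves the same statement about the signs of the norms.
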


\begin{corollary} \label{corollary:components-chi}
The values of $\chi$ for $C(x)$, $x \in \E_n$, are as follows:
\begin{enumerate}
    \item If $\gamma_k = -1$ for all $k = 0, \dots, n-1$, then $\chi(C(x)) = \gamma_n$ for all $x \in \E_n$.
    \item Otherwise, $\chi(C(x))$ equals $1$ for $2^{n-1}$ elements $x \in \E_n$, and $-1$ for the other $2^{n-1}$ elements. Moreover, we always have $\chi(C(e_0)) = \gamma_n$.
\end{enumerate}
\end{corollary}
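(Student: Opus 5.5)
The plan is to combine Proposition~\ref{proposition:chi-value} with Proposition~\ref{proposition:basis-norms}. By Proposition~\ref{proposition:chi-value}, for every $x \in \E_n$ we have $\chi(C(x)) = \gamma_n n(x)$. The statement therefore reduces to counting how many basis elements of $\A_n$ have norm $1$ and how many have norm $-1$, and multiplying by $\gamma_n$.

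For the first case, assume $\gamma_k = -1$ for all $k = 0, \dots, n-1$. Then Proposition~\ref{proposition:basis-norms} gives $n(x) = 1$ for every $x \in \E_n$, so $\chi(C(x)) = \gamma_n$ for all $x$. This is item~(1).

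For the second case, some $\gamma_k = 1$ with $k \leq n-1$. Proposition~\ref{proposition:basis-norms} says that exactly $2^{n-1}$ elements of $\E_n$ have norm $1$ and the remaining $2^{n-1}$ have norm $-1$. Multiplying by $\gamma_n = \pm 1$ is a bijection on $\{\pm 1\}$, so the values $\gamma_n n(x)$ again split evenly: $2^{n-1}$ elements $x$ give $\chi = 1$ and $2^{n-1}$ give $\chi = -1$. The two classes merely swap when $\gamma_n = -1$. Finally, $n(e_0) = 1$ always, so $\chi(C(e_0)) = \gamma_n$.

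The only point needing care is that Proposition~\ref{proposition:chi-value} is stated for every $x \in \E_n$, including $x = e_0$. Moreover, $\chi(C(x))$ is defined through the explicit formula even when $C(x)$ is empty or disconnected, as the paper notes before Proposition~\ref{proposition:chi-value}. So no vertex-existence hypothesis is needed. There is no real obstacle beyond this bookkeeping. The hypothesis $n \geq 1$ of Proposition~\ref{proposition:basis-norms} is implicit here, since otherwise the statement is vacuous or trivial.
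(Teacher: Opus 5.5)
Your proposal is correct and follows the paper's proof exactly: substitute $\chi(C(x)) = \gamma_n n(x)$ from Proposition~\ref{proposition:chi-value}, then read off the norm distribution from Proposition~\ref{proposition:basis-norms}. Your remark that multiplying by $\gamma_n$ only swaps the two equal-size classes is the one detail the paper leaves implicit.
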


\begin{proof}
By Proposition~\ref{proposition:chi-value}, $\chi(C(x)) = \gamma_n n(x)$. It remains to apply Proposition~\ref{proposition:basis-norms}.
\end{proof}

\begin{lemma} \label{lemma:low-dimensional-components}
For $1 \leq n \leq 3$ we can find the explicit form of $C(x)$, $x \in \E_n$, depending on the value of its $\chi$.
\begin{enumerate}[label={$n = \arabic*:$}, leftmargin=*]
    \item If $\chi = -1$, then $C(x)$ has an empty set of vertices. If $\chi = 1$, then $C(x)$ consists of two disconnected vertices.
    \item If $\chi = -1$, then $C(x)$ has an empty set of vertices. If $\chi = 1$, then $C(x)$ is a hexagon.
    \item Assume first that $x \in \E'_n$. If $\chi = -1$, then $C(x)$ is a double hexagon, see Fig.~\ref{figure:double-hexagon}. If $\chi = 1$, then $C(x)$ is a bundle of hexagons, see Fig.~\ref{figure:bundle}.
    
    Consider now the case when $x = e_0$. If $\chi = -1$, then $C(e_0)$ has an empty set of vertices. If $\chi = 1$, then $C(e_0)$ is an almost complete $(7,7)$-bipartite graph, see $C(e_0)$ of $\Gamma_e(\hat{\mathbb{S}})$ in Fig.~\ref{figure:H4-graph}.
\end{enumerate}
\end{lemma}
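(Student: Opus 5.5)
The plan is to treat each value of $n$ separately, using Corollary~\ref{corollary:zero-divisors-criterion} to list the vertices and Theorem~\ref{theorem:orthogonality-conditions} (Table~\ref{table:orthogonality-conditions}) to determine the edges. In each case $\chi(C(x))$ is known from Proposition~\ref{proposition:chi-value}, and the vertices of $C(x)$ are the pairs $(a,b)$ with $a\in\E'_n$, $b\in\pm\E_n$ and $ab=\pm x$. For every $a\in\E'_n$ there are exactly two such $b$, differing by sign.

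\textbf{Case $n=1$.} Here $\E_1=\{e_0,\til{e}_0\}$, so $\E'_1=\{\til{e}_0\}$. If $\chi=-1$, Corollary~\ref{corollary:zero-divisors-criterion}(1) excludes every candidate, so $C(x)$ has no vertices. If $\chi=1$, the candidates are $(\til{e}_0,\pm\til{e}_0)$ for $x=e_0$, and $(\til{e}_0,\pm e_0)$ for $x=\til{e}_0$. I will check from Table~\ref{table:orthogonality-conditions} that the two vertices are not adjacent.
\begin{itemize}
\item For $x=\til{e}_0$, the special element $(\til{e}_0,e_0)$ (row~\ref{theorem:e_0-special-element}) is adjacent only to $(a,b)$ with $ab=n(b)\til{e}_0$. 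The pair $(\til{e}_0,-e_0)$ fails this.
\item For $x=e_0$, row~\ref{theorem:type-1-element} requires $b\neq a$, which is impossible because $\E'_1$ is a singleton.
\end{itemize}
In both subcases the two vertices are also not self-adjacent, since the graph has no loops. So $C(x)$ consists of two isolated vertices.

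\textbf{Case $n=2$.} If $\chi=-1$, the component is again empty by Corollary~\ref{corollary:zero-divisors-criterion}(1). If $\chi=1$:
\begin{itemize}
\item For $x\in\E''_2$, the hexagon was already written out in the proof of Lemma~\ref{lemma:find-special-elements}.
\item For $x=\til{e}_0$, the vertices are $(\til{e}_0,\pm e_0)$, $(a,\pm\til{a})$ and $(\til{a},\pm a)$ with $a\in\E''_2$. Row~\ref{theorem:type-2-element} shows that $(\til{a},\pm a)$ is adjacent to $(\til{e}_0,\pm e_0)$ and to $(a,\pm\gamma_n\til{a})$ only. Together with row~\ref{theorem:e_0-special-element}, this yields a $6$-cycle.
\item For $x=e_0$, the vertices are $(a,\pm a)$ with $a\in\E'_2$, which gives $6$ vertices. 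Row~\ref{theorem:type-1-element} joins $(a,a)$ to $(b,-b)$ for $b\neq a$. This is $K_{3,3}$ minus a perfect matching, which is a hexagon.
\end{itemize}

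\textbf{Case $n=3$.} For $x=e_0$, the component is empty when $\gamma_3=-1$. When $\gamma_3=1$, row~\ref{theorem:type-1-element} gives $K_{7,7}$ minus a perfect matching, which is the almost complete $(7,7)$-bipartite graph.

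For $x\in\E''_3$, the structure follows from cases (1) and (2) of the proof of Theorem~\ref{theorem:nonspecial-components}. With $\chi=-1$, the $12$ vertices form exactly one double hexagon, as observed in Lemma~\ref{lemma:find-special-elements}. With $\chi=1$, I will obtain the hexagons through $(x,\pm e_0)$. There are three of them: one for $(a,b)$, one for $(\til{a},\til{b})$, and the one containing $(\til{x},\pm\til{e}_0)$. By Table~\ref{table:orthogonality-conditions} there are no further edges, because every nonspecial pair lies inside $\mathbb{O}_{a,b}$ and Lemma~\ref{lemma:distinct-nonspecial-element} does not apply. This yields the bundle of Fig.~\ref{figure:bundle}.

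For $x=\til{e}_0$, and for $x=\til{y}$ with $y\in\E'_2$ (type~\ref{item:component-type-4}), I will use the relabelling $(\til{a},\gamma_{n-1}b)$, $(a,\til{b})$ from cases (3) and (4) of that proof. For $x=\til{e}_0$ I will also use the second half of Lemma~\ref{lemma:special-components}. Both should produce the same two shapes, distinguished only by $\chi$.

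\textbf{Main obstacle.} The hardest part is proving that no extra edges occur when $n=3$. In particular, for $C(\til{e}_0)$ I must check that the diagrams in Fig.~\ref{figure:special-components} with $N=3$ are isomorphic to the double hexagon when $\chi=-1$, and to the bundle when $\chi=1$. I would first try to exhibit these isomorphisms explicitly using the sign rules in rows~\ref{theorem:type-2-element} and~\ref{theorem:e_0-special-element}.
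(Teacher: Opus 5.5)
Your proposal is correct and follows essentially the paper's route. Corollary~\ref{corollary:zero-divisors-criterion} gives the empty cases and the vertex sets, and the edges come from Table~\ref{table:orthogonality-conditions} through Lemma~\ref{lemma:special-components} and Theorem~\ref{theorem:nonspecial-components}, specialized to $\E_1,\E_2,\E_3$; you just check $n\le 2$ more explicitly, and in case $n=3$ your ``$x\in\E''_3$'' should read $x\in\E'_2$.

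The step you flag for $C(\til{e}_0)$ at $n=3$ does go through. For $\chi=-1$, row~\ref{theorem:type-2-element} joins $(\til{b},sb)$ to $(\til{c},-sc)$ exactly when $c\notin\{b,\til{b}\}$. This is $K_{3,3}$ minus a perfect matching with each vertex doubled, i.e.\ a double hexagon. For $\chi=1$, the same row yields six internally disjoint paths of length three between $(\til{e}_0,\pm e_0)$, which is the bundle of hexagons.
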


\begin{proof}
It follows from Corollary~\ref{corollary:zero-divisors-criterion} that there are no zero divisors with $\chi = -1$ for $n \leq 2$, while any element with $\chi = 1$ is always a zero divisor. The rest follows immediately from Lemma~\ref{lemma:special-components} and Theorem~\ref{theorem:nonspecial-components}. We use here the fact that:
\begin{enumerate}[label={$n = \arabic*:$}, leftmargin=*]
    \item $\E_n = \{ e_0, \til{e}_0 \}$;
    \item $\E_n = \{ e_0, \til{e}_0, a, \til{a} \}$ up to signs for any $a \in \E''_n$;
    \item $\E_n = \{ e_0, \til{e}_0, a, \til{a}, b, \til{b}, ab, \til{ab} \}$ up to signs if $a,b \in \E''_n$ and $a,\til{a},b,\til{b}$ are linearly independent.
\end{enumerate}
Note that there is no difference between components of different types, except for~\ref{item:component-type-1} when $n = 3$, and their structure depends only on the value of $\chi$.
\end{proof}

\begin{theorem} \label{theorem:low-dimensional-graphs}
If $1 \leq n \leq 3$, then $\Gamma_e(\A_{n+1})$ is isomorphic to one of the following graphs.
\begin{enumerate}[label={$n = \arabic*:$}, leftmargin=*]
    \item
    \begin{itemize}
        \item If $\gamma_0 = \gamma_1 = -1$, then $\Gamma_e(\A_{n+1})$ has an empty set of vertices.
        \item If $\gamma_0 = 1$, then $\Gamma_e(\A_{n+1})$ consists of two disconnected vertices.
        \item If $\gamma_0 = -1$ and $\gamma_1 = 1$, then $\Gamma_e(\A_{n+1})$ consists of four disconnected vertices, see Fig.~\ref{figure:H2-graph}.
    \end{itemize}
    \item
    \begin{itemize}
        \item If $\gamma_0 = \gamma_1 = \gamma_2 = -1$, then $\Gamma_e(\A_{n+1})$ has an empty set of vertices.
        \item If $\gamma_0 = 1$ or $\gamma_1 = 1$, then $\Gamma_e(\A_{n+1})$ consists of two hexagons.
        \item If $\gamma_0 = \gamma_1 = -1$ and $\gamma_2 = 1$, then $\Gamma_e(\A_{n+1})$ consists of four hexagons, see Fig.~\ref{figure:H3-graph}.
    \end{itemize}
    \item
    \begin{itemize}
        \item If $\gamma_0 = \gamma_1 = \gamma_2 = \gamma_3 = -1$, then $\Gamma_e(\A_{n+1})$ consists of seven double hexagons, see Fig.~\ref{figure:M4-graph}.
        \item If at least one of $\gamma_0, \gamma_1, \gamma_2$ equals $1$ and $\gamma_3 = -1$, then $\Gamma_e(\A_{n+1})$ consists of three double hexagons and four bundles of hexagons.
        \item If at least one of $\gamma_0, \gamma_1, \gamma_2$ equals $1$ and $\gamma_3 = 1$, then $\Gamma_e(\A_{n+1})$ consists of four double hexagons, three bundles of hexagons and one almost complete $(7,7)$-bipartite graph.
        \item If $\gamma_0 = \gamma_1 = \gamma_2 = -1$ and $\gamma_3 = 1$, then $\Gamma_e(\A_{n+1})$ consists of seven bundles of hexagons and one almost complete $(7,7)$-bipartite graph, see Fig.~\ref{figure:H4-graph}.
    \end{itemize}
\end{enumerate}
\end{theorem}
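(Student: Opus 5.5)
The plan is to assemble $\Gamma_e(\A_{n+1})$ from its pieces $C(x)$, $x \in \E_n$. By Lemma~\ref{lemma:basis-pairs-zero-divisors-relation} and Remark~\ref{remark:component-types}, the subgraphs $C(x)$ for distinct $x$ are pairwise disconnected, so the whole graph is their disjoint union. Each piece is then identified by Lemma~\ref{lemma:low-dimensional-components} from its characteristic $\chi(C(x))$, and for $n=3$ also from whether $x = e_0$. It therefore suffices to count how many $x \in \E_n$ have $\chi = 1$ and how many have $\chi = -1$, and to record $\chi(C(e_0))$. Corollary~\ref{corollary:components-chi} provides exactly this. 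In the main-sequence case ($\gamma_0 = \dots = \gamma_{n-1} = -1$) all $2^n$ values of $\chi$ equal $\gamma_n$. Otherwise the values split into $2^{n-1}$ ones and $2^{n-1}$ minus ones, with $\chi(C(e_0)) = \gamma_n$ in all cases.

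The case $n=1$ needs one extra step. Here $\E_1 = \{e_0, \til{e}_0\}$, and for $\gamma_0 = -1$ the norms are $n(e_0) = n(\til{e}_0) = 1$. So $\chi = \gamma_1$ for both pieces, and Lemma~\ref{lemma:low-dimensional-components} gives either nothing or $2+2 = 4$ isolated vertices. For $\gamma_0 = 1$ the split applies: exactly one piece has $\chi = 1$, giving two isolated vertices, whatever $\gamma_1$ is. The case $n=2$ is identical in form. In the main-sequence case we get either $0$ or $4$ hexagons. If $\gamma_0 = 1$ or $\gamma_1 = 1$, two of the four $x$ have $\chi = 1$, which yields two hexagons.

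For $n=3$ we separate $x = e_0$ from the rest. In the main-sequence case with $\gamma_3 = -1$, all seven pieces $C(x)$ with $x \in \E'_3$ are double hexagons and $C(e_0)$ is empty. In the main-sequence case with $\gamma_3 = 1$, we get seven bundles of hexagons together with the almost complete $(7,7)$-bipartite graph. In the non-main case there are four $x$ with $\chi = 1$ and four with $\chi = -1$, and $e_0$ lies among those with $\chi = \gamma_3$. If $\gamma_3 = -1$, the pieces are three double hexagons, an empty $C(e_0)$, and four bundles. If $\gamma_3 = 1$, they are three bundles, one bipartite graph, and four double hexagons.

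The only delicate point is making sure Lemma~\ref{lemma:low-dimensional-components} is applied correctly: for $n \le 2$ the shape of $C(x)$ depends only on $\chi$, even when $x = e_0$ or $x = \til{e}_0$. This holds because every element with $\chi = 1$ is a zero divisor by Corollary~\ref{corollary:zero-divisors-criterion}, and for such small $\E_n$ the possible pairs $(a,b)$ with $ab = \pm x$ produce the same configuration for every $x$. With that in place, the rest is just the counting above.
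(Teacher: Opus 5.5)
Your proposal is correct and follows the paper's proof. Like the paper, you decompose $\Gamma_e(\A_{n+1})$ into the mutually disconnected pieces $C(x)$, read off each piece's shape from its characteristic via Lemma~\ref{lemma:low-dimensional-components} (treating $C(e_0)$ separately only for $n=3$), and count the values of $\chi$ with Corollary~\ref{corollary:components-chi}. The paper leaves that count implicit, and your case-by-case tally spells it out correctly, including the observation that $\chi(C(e_0))=\gamma_3$ decides which group $e_0$ falls into for $n=3$; your extra argument for the ``delicate point'' is not needed, since the lemma already states it.
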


\begin{proof}
Follows immediately from Corollary~\ref{corollary:components-chi} and Lemma~\ref{lemma:low-dimensional-components}. For $n \leq 2$ we need not consider $C(e_0)$ independently, and all $C(x)$ with $\chi = -1$ have empty sets of vertices. As for $n = 3$, $C(e_0)$ differs from other components with the same $\chi$, and it is the only component which can vanish, namely, for $\gamma_n = -1$. This is due to Corollary~\ref{corollary:retrieve-n-and-chi} which takes place exactly for $n \geq 3$. For this reason there are four cases for $n = 3$, instead of three cases for $n = 1$ and $n = 2$.
\end{proof}

\begin{corollary} \label{corollary:low-dimensional-isomorphic}
Let $1 \leq n, m \leq 3$, $\gamma = \{ \gamma_0, \dots, \gamma_n \}$ and $\lambda = \{ \lambda_0, \dots, \lambda_m \}$ be two sequences of parameters. Assume that $\Gamma_e(\A^{\gamma}_{n+1})$ and $\Gamma_e(\A^{\lambda}_{m+1})$ both have nonempty sets of vertices and are isomorphic. Then $\A^{\gamma}_{n+1}$ and $\A^{\lambda}_{m+1}$ are also isomorphic.
\end{corollary}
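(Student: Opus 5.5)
Theorem~\ref{theorem:low-dimensional-graphs} gives, for $1 \leq n \leq 3$, a complete list of the isomorphism types of $\Gamma_e(\A_{n+1})$ in terms of the parameters. The plan is to check that graphs in different cases of this list are pairwise nonisomorphic, and that each case with a nonempty vertex set corresponds to a single isomorphism class of algebras.

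\textbf{Step 1: the graph determines $n$ and the case.} Take the graphs with nonempty vertex sets from Theorem~\ref{theorem:low-dimensional-graphs} and compare vertex counts and component shapes.
\begin{itemize}
\item For $n=1$ we get $2$ or $4$ isolated vertices.
\item For $n=2$ we get $2$ or $4$ hexagons, i.e.\ $12$ or $24$ vertices, all of degree two.
\item For $n=3$ every component has diameter three and contains vertices of degree at least three. A double hexagon has $12$ vertices, and every vertex has degree four. The bundle of hexagons in Fig.~\ref{figure:bundle} has $14$ vertices and two vertices of degree $6$. The almost complete $(7,7)$-bipartite graph has $14$ vertices of degree $6$ with different adjacency.
\end{itemize}
These invariants (number of vertices, vertex degrees, whether there are edges at all) separate the cases. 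So the graph determines $n=m$ and the bullet of the theorem. For $n=3$ the counts of double hexagons, bundles and bipartite components, namely $(7,0,0)$, $(3,4,0)$, $(4,3,1)$ and $(0,7,1)$, are all distinct.

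\textbf{Step 2: each case determines the algebra up to isomorphism.} Within one bullet, all parameter sequences should give isomorphic algebras. The main tool is Proposition~\ref{proposition:split-algebras}, together with the fact from Subsection~\ref{subsection:A_n-properties} that $\A_n$ depends only on the signs of the parameters.
\begin{itemize}
\item For $n \leq 2$, $\A_{n+1}$ is a composition algebra. It is the division algebra $\Main_{n+1}$ exactly when all parameters are $-1$. Otherwise the norm is isotropic, and the algebra is the split algebra $\hat{\mathbb{C}}$, $\hat{\mathbb{H}}$ or $\hat{\mathbb{O}}$. By Proposition~\ref{proposition:split-algebras} and the discussion preceding it, each of these is unique up to isomorphism.
\item Hence for $n=1$ the two nonempty cases, $\gamma_0=1$ and $(\gamma_0,\gamma_1)=(-1,1)$, both give $\hat{\mathbb{H}}$. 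Here $n$ denotes the index in $\A_{n+1}$, with $n=1$ meaning $\A_2$. The step is to check that the two cases are distinguished by the graph anyway, and that both correspond to $\hat{\mathbb{H}}$. Similarly, for $n=2$ both nonempty bullets give $\hat{\mathbb{O}}$.
\item For $n=3$ the algebra is $\A_3\{\gamma_0,\gamma_1,\gamma_2\}\{\gamma_3\}$. Its first part is $\mathbb{O}$ when $\gamma_0=\gamma_1=\gamma_2=-1$, and $\hat{\mathbb{O}}$ otherwise. Thus the four bullets correspond to $\mathbb{O}\{-1\}$, $\hat{\mathbb{O}}\{-1\}$, $\hat{\mathbb{O}}\{1\}$ and $\mathbb{O}\{1\}$. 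Within each bullet the algebra is therefore determined up to isomorphism, since doubling isomorphic algebras with the same parameter gives isomorphic algebras.
\end{itemize}

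\textbf{Step 3: combine.} If the two graphs are isomorphic, Step 1 gives $n=m$ and the same bullet, and Step 2 then gives isomorphic algebras.

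The main obstacle is Step 1 for $n=3$. There we must verify rigorously that a double hexagon, a bundle of hexagons and the almost complete bipartite graph are pairwise nonisomorphic, which requires reading their vertex and degree data off the figures. We must also make sure the low-dimensional cases are not confused across different values of $n$, for example $2$ isolated vertices for $n=1$ against $4$ isolated vertices. Comparing vertex counts handles this.
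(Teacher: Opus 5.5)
Your proposal is correct and follows essentially the same route as the paper: read off $n$ and the relevant case from Theorem~\ref{theorem:low-dimensional-graphs}, then conclude via Proposition~\ref{proposition:split-algebras}, extending the isomorphism $\A_3 \cong \Hyp_3$ componentwise. The paper separates $n$ by component sizes $1$, $6$, $12$/$14$ rather than by edges and degrees, and for $n=3$ it first fixes $\gamma_3$ from the number of components; your degree data for the double hexagon, the bundle of hexagons and the almost complete bipartite graph are accurate, so your case separation is equally sound.
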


\begin{proof}
By Theorem~\ref{theorem:low-dimensional-graphs}, the size of every connected component equals one for $n = 1$, six for $n = 2$, and twelve or fourteen for $n = 3$. Since $\Gamma_e(\A^{\gamma}_{n+1})$ and $\Gamma_e(\A^{\lambda}_{m+1})$ have nonempty sets of vertices and are isomorphic, we immediately obtain $n = m$. Consider now three cases:
\begin{enumerate}[label={$n = \arabic*:$}, leftmargin=*]
    \item The only two cases when distinct algebras have isomorphic graphs are $\gamma_0 = 1$, $\gamma_1 = -1$ and $\gamma_0 = 1$, $\gamma_1 = 1$. By Proposition~\ref{proposition:split-algebras}, both algebras are isomorphic to $\Hyp_2$.
    \item Distinct algebras have isomorphic graphs when at least one of the parameters $\gamma_0$ and $\gamma_1$ equals $1$. However, by Proposition~\ref{proposition:split-algebras}, we obtain algebras isomorphic to $\Hyp_3$ in all six cases.
    \item According to Corollary~\ref{corollary:retrieve-n-and-chi}, we have seven connected components if $\gamma_3 = -1$, and eight connected components if $\gamma_3 = 1$. Therefore, $\gamma_3$ is uniquely determined.
    
    Let, for example, $\gamma_3 = -1$. Distinct algebras have isomorphic graphs if and only if at least one of $\gamma_0, \gamma_1, \gamma_2$ equals $1$. However, in all seven cases the algebra $\A_3$ is isomorphic to $\Hyp_3$. Hence $\A_4 = \A_3 \{ -1 \}$ is isomorphic to $\Hyp_3 \{ -1 \}$ (we apply the isomorphism componentwise).
    
    The case $\gamma_3 = 1$ is entirely similar. \qedhere
\end{enumerate}
\end{proof}

We will see in Theorem~\ref{theorem:isomorphic-graphs} that for $n = 3$ the converse to Corollary~\ref{corollary:low-dimensional-isomorphic} holds, that is, if algebras are isomorphic, then their graphs are also isomorphic.

However, if $n \leq 2$, then the converse to Corollary~\ref{corollary:low-dimensional-isomorphic} is not true. By Proposition~\ref{proposition:split-algebras}, for $n \leq 2$ all real Cayley--Dickson algebras $\A_{n+1}$ are isomorphic either to algebras of the main sequence $\Main_{n+1}$ or to split-algebras $\Hyp_{n+1}$. Particularly, $\A_2\{1,-1\} \cong \A_2\{-1,1\} = \Hyp_2$ but, by Theorem~\ref{theorem:low-dimensional-graphs}, their graphs are nonisomorphic. Similarly, the graphs of isomorphic algebras $\A_3\{1,-1,-1\}$ and $\A_3\{-1,-1,1\} = \Hyp_3$ are also nonisomorphic. We will discuss in Section~\ref{section:another-approach} how we can cope with this problem.

\subsection{Building graphs recursively}

We have demonstrated in Subsection~\ref{subsection:finding-last-two} how $\gamma_{n-1}$ and $\gamma_n$ are determined from $\Gamma_e(\A_{n+1})$ for $n \geq 4$. We now obtain from $\Gamma_e(\A_{n+1})$, $n \geq 4$, either $\Gamma_e(\A^{\circ}_n)$ or $\Gamma_e(\A^{\bullet}_n)$. This helps us to find all other values of $\gamma_k$, $k = 0, \dots, n$, recursively.

Theorem~\ref{theorem:nonspecial-components} implies that for $x \in \E'_{n-1}$ the component $C(x)$ should be reduced to the connected component $C^{\circ}(x)$ of $\Gamma_e(\A^{\circ}_n)$, while $C(\til{x})$ should be reduced to the connected component $C^{\bullet}(x)$ of $\Gamma_e(\A^{\bullet}_n)$. We first see how the value of $\chi$ is changed under this transformation.

\begin{lemma} \label{lemma:chi-transformation}
Let $x \in \E'_{n-1}$.
\begin{enumerate}
    \item If $C(x)$ is transformed into $C^{\circ}(x)$, then $\chi$ is preserved.
    \item If $C(\til{x})$ is transformed into $C^{\bullet}(x)$, then $\chi$ changes sign.
\end{enumerate}
\end{lemma}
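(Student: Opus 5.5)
The plan is to compute $\chi$ on both sides with Proposition~\ref{proposition:chi-value}, applied once in $\A_{n+1}$ and once in $\A^{\circ}_n$ or $\A^{\bullet}_n$. That proposition gives $\chi(C(y)) = \gamma_n n(y)$ in $\A_{n+1}$. For a component of $\Gamma_e(\A^{\circ}_n)$ or $\Gamma_e(\A^{\bullet}_n)$, the same proposition applies with $n$ replaced by $n-1$. The ``last parameter'' is then $\gamma_n$ for $\A^{\circ}_n$ and $\gamma_{n-1}\gamma_n$ for $\A^{\bullet}_n$, and the norm of $x\in\E'_{n-1}$ is computed in $\A_{n-1}$. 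Since $\A^{\circ}_n$ and $\A^{\bullet}_n$ share their first $n-1$ parameters $\gamma_0,\dots,\gamma_{n-2}$ with $\A_{n+1}$, the subalgebra $\A_{n-1}$ is literally the same in all three algebras. So $n(x)$ has the same value everywhere.

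For part (1), we get $\chi(C^{\circ}(x)) = \gamma_n n(x)$. Viewing $x=(x,0)\in\A_n$, we also have $n_{\A_n}(x) = n_{\A_{n-1}}(x)$ by the norm formula $n((a,b)) = n(a)-\gamma_{n-1}n(b)$. Hence $\chi(C(x)) = \gamma_n n(x) = \chi(C^{\circ}(x))$.

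For part (2), the norm formula gives $n(\til{x}) = n((0,x)) = -\gamma_{n-1}n(x)$ in $\A_n$. Therefore
$$
\chi(C(\til{x})) = \gamma_n n(\til{x}) = -\gamma_{n-1}\gamma_n n(x),
$$
whereas $\chi(C^{\bullet}(x)) = (\gamma_{n-1}\gamma_n)\, n(x)$. These differ by a sign, as claimed.

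The one point to check is that the intended correspondence really matches $C(\til{x})$ with $C^{\bullet}(x)$, the component of products $\pm x$. This is confirmed by Theorem~\ref{theorem:nonspecial-components} and Lemma~\ref{lemma:distinct-nonspecial-element}(2): there the vertex $(a,\til b)$ with $ab=n(b)x$ corresponds to $(a,b)$ in $\A^{\bullet}_n$, and $ab=\pm x$ indeed holds. As a consistency check, the dotted and dashed edges are exactly the ones seen in $\A^{\circ}_n$ and $\A^{\bullet}_n$, so $\chi$ is recomputed there directly. I expect no real obstacle; the only care needed is with the sign conventions $\til{e}_0=(0,e_0)$ and $n(\til e_0)=-\gamma_{n-1}$.
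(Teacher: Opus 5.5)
Your proposal is correct and follows the paper's proof: you apply Proposition~\ref{proposition:chi-value} in $\A_{n+1}$ and again in $\A^{\circ}_n = \A_{n-1}\{\gamma_n\}$ and $\A^{\bullet}_n = \A_{n-1}\{\gamma_{n-1}\gamma_n\}$, with $n(x)$ computed in the common subalgebra $\A_{n-1}$. The only cosmetic difference is that you get $n(\til{x}) = -\gamma_{n-1}n(x)$ directly from the norm formula for $(0,x)$, whereas the paper writes it as $n(x)\,n(\til{e}_0)$.
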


\begin{proof}
We use Proposition~\ref{proposition:chi-value} to find the values of $\chi$.
\begin{enumerate}
    \item We have $\chi(C(x)) = \gamma_n n(x)$. Since $x \in \A_{n-1}$, the norm of $x$ can also be considered in $\A_{n-1}$. Recall now that $\A_n^{\circ} = \A_{n-1} \{ \gamma_n \}$. Clearly, $\chi(C^{\circ}(x)) = \gamma_n n(x)$, too.
    \item We have $\chi(C(\til{x})) = \gamma_n n(\til{x}) = \gamma_n n(x)n(\til{e}_0) = -\gamma_{n-1} \gamma_n n(x)$ in $\Gamma_e(\A_{n+1})$. Since $\A_n^{\bullet} = \A_{n-1} \{ \gamma_{n-1}\gamma_n \}$, $\chi(C^{\bullet}(x)) = \gamma_{n-1} \gamma_n n(x)$. Therefore, $\chi$ changes sign. \qedhere
\end{enumerate}
\end{proof}

\begin{lemma} \label{lemma:nonspecial-components-transformation}
Let $x \in \E'_{n-1}$, $n \geq 4$. Then Fig.~\ref{figure:nonspecial-components-transformation} shows how $C(x)$ and $C(\til{x})$ are reduced to $C^{\circ}(x)$ and $C^{\bullet}(x)$. The bold lines denote those edges and vertices which are preserved, while all others are deleted.
\begin{enumerate}
    \item $C(x)$ is transformed into $C^{\circ}(x)$ as follows:
    \begin{enumerate}[label=(\alph*)]
        \item If $\chi = -1$, then we find some special element and mark all its neighbours. Then we delete all elements except for the marked. In particular, we delete all special elements. In other words, we take the subgraph of $C(x)$ on the marked vertices.
        \item If $\chi = 1$, then we find two opposite special elements and delete two isolated columns between them which correspond to $(\til{x}, \pm \til{e}_0)$. Then we divide all other columns into pairs as follows: there are no edges between two columns of a pair, however, they have the same sets of neighbour columns. Finally, we keep only one column from each pair.
    \end{enumerate}
        \newpage
\begin{figure}[H]
\centering
\includegraphics[width=\linewidth]{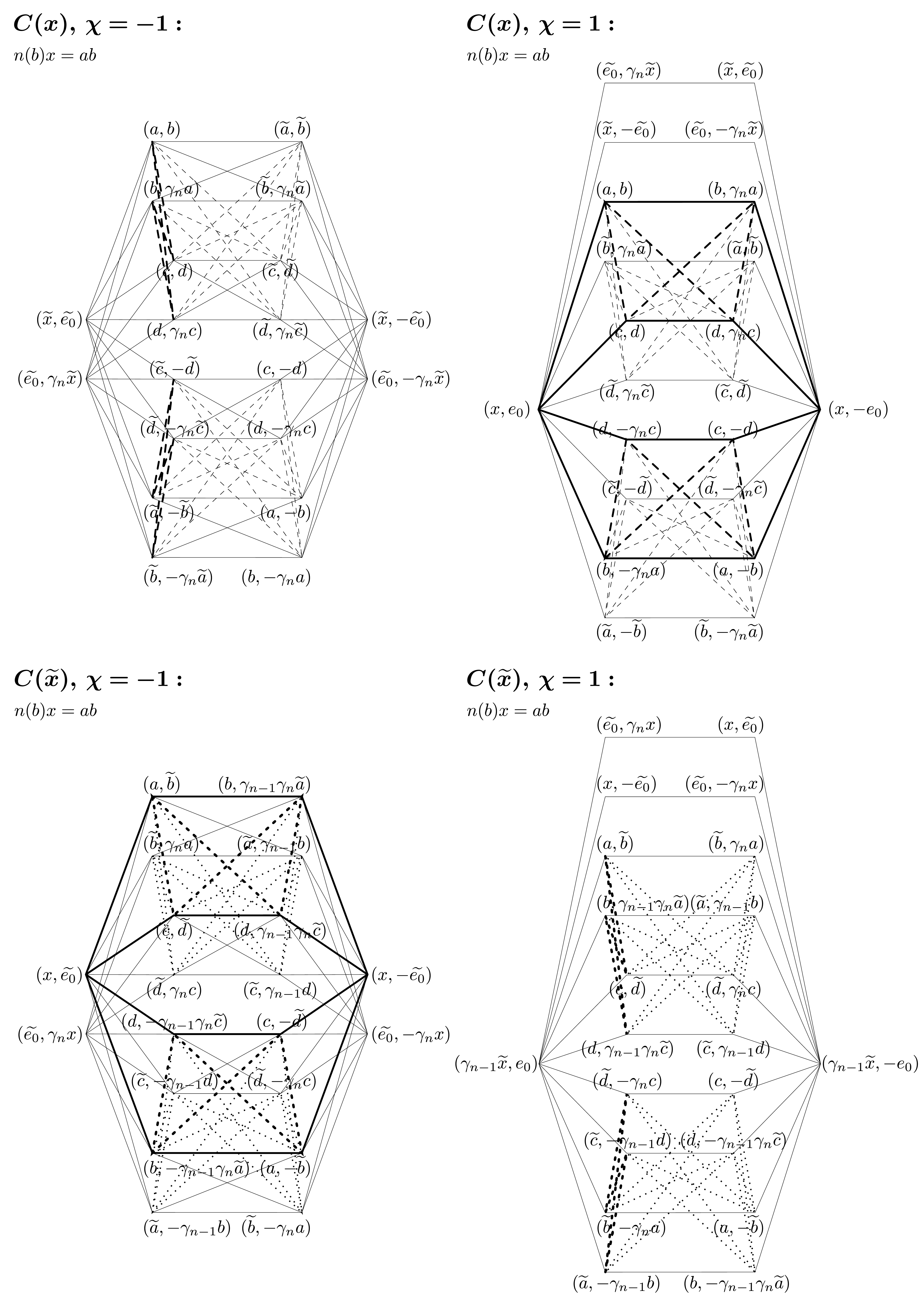}
\caption{\label{figure:nonspecial-components-transformation} Recursive transformation of connected components of types~\ref{item:component-type-3} and~\ref{item:component-type-4}.}
\end{figure}

    \item $C(\til{x})$ is transformed into $C^{\bullet}(x)$ as follows:
    \begin{enumerate}[label=(\alph*)]
        \item If $\chi = -1$, then we divide all elements into pairs: two elements of a pair are not adjacent but they have the same sets of neighbours. Then we keep only one element from each pair.
        \item If $\chi = 1$, then we find two opposite special elements and delete two isolated columns between them which correspond to $(\til{x}, \pm \til{e}_0)$. Then we mark all remaining neighbours of some special element. Finally, we delete all elements except for the marked.
    \end{enumerate}
\end{enumerate}
\end{lemma}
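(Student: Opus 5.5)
The plan is to work from the explicit descriptions of $C(x)$ and $C(\til{x})$ in Theorem~\ref{theorem:nonspecial-components} and of $C^{\circ}(x)$, $C^{\bullet}(x)$ obtained by applying the same theorem to $\A^{\circ}_n$ and $\A^{\bullet}_n$. Lemma~\ref{lemma:chi-transformation} tells us which $\chi$ the target component has. Since $n \geq 4$, Lemma~\ref{lemma:find-special-elements} lets us locate the special elements of $C(x)$ and $C(\til{x})$ intrinsically from the graph, so each deletion rule is graph-theoretically well defined. It then remains to check, case by case, that the surviving subgraph is isomorphic to the target component. The key tool is Lemma~\ref{lemma:distinct-nonspecial-element}: the four-element groups $\{(a,b),(b,\gamma_n a),(\til a,\til b),(\til b,\gamma_n \til a)\}$, and their analogues of type~\ref{item:component-type-4}, have identical nonspecial neighbourhoods, governed by orthogonality of $(a,b),(c,d)$ in $\A^{\circ}_n$, respectively in $\A^{\bullet}_n$. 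So the plan is to pick one representative per group, or per pair inside a group, in such a way that both the special part and the dashed or dotted part of the target are reproduced.

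Case 1(a), $C(x)$ with $\chi=-1$. Here $C^{\circ}(x)$ also has $\chi=-1$. By Theorem~\ref{theorem:nonspecial-components}(1), a special element $(\til x,\til e_0)$ is adjacent to exactly one of $(a,\pm b)$ and one of $(\til a,\pm\til b)$ (together with their $\gamma_n$-swaps). I will show that the marked neighbours correspond bijectively to the vertices of $C^{\circ}(x)$, namely the elements $(a,\pm b)$, $(b,\pm\gamma_n a)$ of $\Gamma_e(\A^{\circ}_n)$ with $a,b\in\E'_{n-1}$. The special vertices of $C^{\circ}(x)$, which are $(\til x,\pm\til e_0)$ in $\A^{\circ}_n$ with $\til x\in\E'_{n-1}$, must then correspond to certain marked nonspecial vertices. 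Their adjacencies come from Lemma~\ref{lemma:distinct-nonspecial-element} via the $\A^{\circ}_n$ orthogonality and from the double-hexagon edges. This is where I would do the counting and matching most carefully.

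Case 1(b), $C(x)$ with $\chi=1$. The columns $(\til x,\pm\til e_0)$, $(\til e_0,\pm\gamma_n\til x)$ are attached only to $(x,\pm e_0)$, so deleting them is forced. The remaining columns pair up as $\{(a,\pm b),(b,\pm\gamma_n a)\}$ and $\{(\til a,\pm\til b),(\til b,\pm\gamma_n\til a)\}$. These pairs have no edges between them but identical neighbourhoods, by Lemma~\ref{lemma:distinct-nonspecial-element} and Theorem~\ref{theorem:common-nonspecial-element}. Keeping one column per pair gives exactly the bundle-of-hexagons structure of $C^{\circ}(x)$ with special elements $(x,\pm e_0)$.

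Cases 2(a) and 2(b) are analogous, with $\chi$ flipping by Lemma~\ref{lemma:chi-transformation}. In 2(a) the source is a double-hexagon-type component and the target has $\chi=1$; merging the non-adjacent twins $(a,\til b)\sim(\til b,\gamma_n a)$ etc.\ collapses double hexagons to hexagons, and the special pair becomes $(x,\pm e_0)$. In 2(b) the source has $\chi=1$ and the target has $\chi=-1$, so we reverse the roles of 1(a) and 1(b).

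The main obstacle is verifying that the special vertices of the target are realised by the preserved vertices with exactly the right adjacencies. This involves the cases of Table~\ref{table:orthogonality-conditions} for $(\til x,\til e_0)$-type elements in $\A^{\circ}_n$ and $\A^{\bullet}_n$, along with sign bookkeeping. I would do this by explicitly writing out the hexagons from Theorem~\ref{theorem:nonspecial-components} for both algebras and matching them vertex by vertex.
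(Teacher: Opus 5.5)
Your outline has the same architecture as the paper's proof: locate the special elements, then use the four-element groups of Lemma~\ref{lemma:distinct-nonspecial-element} together with marking and twin-merging to choose representatives. However, the step you yourself call the main obstacle is left undone, and the method you announce for it relies on an incorrect picture of the target. You propose to match the preserved vertices against the description of $C^{\circ}(x)$ and $C^{\bullet}(x)$ given by Theorem~\ref{theorem:nonspecial-components} for $\A^{\circ}_n$ and $\A^{\bullet}_n$, in particular against ``the special vertices $(\til{x},\pm\til{e}_0)$'' of the target. That description does not cover every $x\in\E'_{n-1}$.

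Take $x=e^{(n-1)}_{2^{n-2}}$, the element $\til{e}_0$ of $\A_{n-1}$. Then $C^{\circ}(x)$ is the type~\ref{item:component-type-2} component of $\Gamma_e(\A^{\circ}_n)$ described in Lemma~\ref{lemma:special-components}. Here $\chi=-\gamma_{n-2}\gamma_n$, so case 1(a) does occur. In that case the target has no special vertices at all: your candidates, with tildes taken in $\A_{n-1}$, have first component $\pm e_0$. Similarly, for $n\ge 5$ and $x$ a tilde inside $\A_{n-1}$, the target in 1(b) is a type~\ref{item:component-type-4} component with its own dotted edges, not a bundle of hexagons. Case 2(b), which you only sketch as ``reversing'' 1(a) and 1(b), has the same problem.

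The idea that removes the obstacle is the paper's label-preserving identification, which never uses the shape of the target. Send $(a,b)\mapsto(a,b)$ for $a\in\E'_{n-1}$, $b\in\pm\E_{n-1}$ in $C(x)$, and $(c,\til{d})\mapsto(c,d)$ in $C(\til{x})$. Orthogonality in $\A_{n+1}$ and in $\A^{\circ}_n$ (resp.\ $\A^{\bullet}_n$, whose last parameter is $\gamma_{n-1}\gamma_n$) agree in each of the three possible situations:
\begin{itemize}
\item a component equal to $\pm e_0$ (resp.\ $\pm\til{e}_0$): Theorems~\ref{theorem:e_0-special-element} and~\ref{theorem:til-e_0-special-element} both reduce to $ab=n(b)x$;
\item linearly independent components: Lemma~\ref{lemma:distinct-nonspecial-element};
\item a shared component: the rows of Table~\ref{table:orthogonality-conditions}, with Theorem~\ref{theorem:common-nonspecial-element} on the $\A_{n+1}$ side.
\end{itemize}
So this labelled subgraph is isomorphic to the target whatever its type. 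The target's special vertices, if any, are just some of the labelled vertices, so no separate matching is needed.

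What remains is purely graph-theoretic. Marking the neighbours of one special element, or keeping one member of each twin pair or column pair, selects exactly one vertex from each pair such as $\{(a,b),(\til{a},\til{b})\}$ or $\{(a,\til{b}),(\til{b},\gamma_n a)\}$. The members of each such pair have the same nonspecial neighbours, so the selected subgraph is isomorphic to the labelled one.
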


\begin{proof}
We will use Theorem~\ref{theorem:nonspecial-components} which describes $C(x)$ and $C(\til{x})$ for various values of~$\chi$. Recall that, since $n \geq 4$, special elements can be found by Lemma~\ref{lemma:find-special-elements}.
\begin{enumerate}
    \item We first show how $C(x)$ is transformed into $C^{\circ}(x)$. By Lemma~\ref{lemma:chi-transformation}, $\chi$ is preserved.
    
    \begin{enumerate}[label=(\alph*)]
        \item Let $\chi = -1$. By Corollary~\ref{corollary:retrieve-n-and-chi}, $C(x)$ contains $2^{n+1} - 4$ vertices. Then $\chi(C^{\circ}(x))$ also equals $-1$, and $C^{\circ}(x)$ contains $2^n - 4$ vertices. There are no elements of the form $(x, \pm e_0)$ in both cases. Hence $C^{\circ}(x)$ consists only of the vertices $(a,b)$ for $a \in \E'_{n-1}$, $b \in \pm \E'_{n-1}$ and $ab = \pm x$. It follows from Lemma~\ref{lemma:distinct-nonspecial-element} that the subgraph of $C(x)$ on the elements of the form $(a,b)$ is indeed isomorphic to $C^{\circ}(x)$. Note also that, by Theorem~\ref{theorem:common-nonspecial-element}, the element $(a,b)$ is not adjacent to $(a,-b)$ and $(b,\pm a)$ both in $C(x)$ and in $C^{\circ}(x)$. Therefore, we want to delete special elements $(\til{x}, \pm \til{e}_0)$ and $(\til{e}_0, \pm \gamma_n \til{x})$ together with elements of the form $(\til{a},\til{b})$.
        
        However, we cannot distinguish between $(a,b)$ and $(\til{a},\til{b})$ in the graph, except for the fact that they are connected to distinct pairs of special vertices. Moreover, half of the elements $(a,b)$ are connected to one pair of special elements, and another half to the other, and we do not know how they are divided in half. Therefore, we cannot actually find the subgraph of $C(x)$ on the elements of the form $(a,b)$ but we can find one isomorphic to it. Indeed, if we take all neighbours of some pair of special elements, then we always choose exactly one element from $(a,b)$ and $(\til{a},\til{b})$. Since $(a,b)$ and $(\til{a},\til{b})$ have the same sets of nonspecial neighbours, we obtain a graph isomorphic to $C^{\circ}(x)$.
        
        \item Let $\chi = 1$. Then $C(x)$ contains $2^{n+1} - 2$ vertices, $\chi(C^{\circ}(x))$ also equals $1$, and $C^{\circ}(x)$ contains $2^n - 2$ vertices. There are elements of the form $(x, \pm e_0)$ in both cases. The subgraph of $C(x)$ on the elements of the form $(a,b)$ and $(x,\pm e_0)$ is isomorphic to $C^{\circ}(x)$. Indeed, for $a,c \in \E'_{n-1}$, $b,d \in \pm \E_{n-1}$ the elements $(a,b)$ and $(c,d)$ are orthogonal in $\A_{n+1}$ if and only if $(a,b)$ and $(c,d)$ are orthogonal in $\A^{\circ}_n$. We consider three cases independently: if $b = \pm e_0$ or $d = \pm e_0$, then we use Theorem~\ref{theorem:e_0-special-element}; if $e_0,a,b,c,d$ are linearly independent, then we use Lemma~\ref{lemma:distinct-nonspecial-element}; if $c = a$ or $d = \pm a$, then we use Theorem~\ref{theorem:common-nonspecial-element}.
        
        Therefore, we want to delete the elements $(\til{x}, \pm \til{e}_0)$ and $(\til{e}_0, \pm \gamma_n \til{x})$ together with elements of the form $(\til{a},\til{b})$ for $a \in \E'_{n-1}$, $b \in \pm \E'_{n-1}$. We first find special elements by Lemma~\ref{lemma:find-special-elements}. Then the elements $(\til{x}, \pm \til{e}_0)$ and $(\til{e}_0, \pm \gamma_n \til{x})$ can be easily found, since they belong to two columns between $(x,e_0)$ and $(x,-e_0)$ which are not adjacent to any other columns. We delete these two columns and then work with the rest of graph.
        
        We are now to delete the elements of the form $(\til{a}, \til{b})$. Note that we cannot distinguish between the column which contains $(a,b)$ and $(b, \gamma_n a)$, and the column which contains $(\til{a},\til{b})$ and $(\til{b},\gamma_n \til{a})$. Clearly, we need only one of them. Then we can divide all columns into pairs as follows: there are no edges between two columns of a pair, however, they have the same sets of neighbour columns. Finally, we keep only one column from each pair and thus obtain a graph which is isomorphic to $C^{\circ}(x)$.
    \end{enumerate}
    
    \item Next we show how $C(\til{x})$ is transformed into $C^{\bullet}(x)$. By Lemma~\ref{lemma:chi-transformation}, $\chi$ changes sign.
    
    \begin{enumerate}[label=(\alph*)]
        \item Let $\chi = -1$. Then $C(\til{x})$ contains $2^{n+1} - 4$ vertices, $\chi(C^{\bullet}(x))$ equals $1$, and $C^{\bullet}(x)$ contains $2^n - 2$ vertices. There are no elements $(\til{x},\pm e_0)$ in $C(\til{x})$, but there are elements $(x, \pm e_0)$ in $C^{\bullet}(x)$. Hence $C^{\bullet}(x)$ consists of $(x,\pm e_0)$ and the vertices $(a,b)$ for $a \in \E'_{n-1}$, $b \in \pm \E'_{n-1}$ and $ab = \pm x$. We claim that the subgraph of $C(\til{x})$ on all elements of the form $(c,\til{d})$ for $c \in \E'_{n-1}$, $d \in \pm \E_{n-1}$, is isomorphic to $C^{\bullet}(x)$. The required isomorphism maps $(c,\til{d})$ to $(c,d)$.
        
        Indeed, for $a,c \in \E'_{n-1}$, $b,d \in \pm \E_{n-1}$ the elements $(a,\til{b})$ and $(c,\til{d})$ are orthogonal in $\A_{n+1}$ if and only if $(a,b)$ and $(c,d)$ are orthogonal in $\A^{\bullet}_n$. We consider three cases independently: if $b = \pm e_0$ or $d = \pm e_0$, then we use Theorem~\ref{theorem:e_0-special-element}; if $e_0,a,b,c,d$ are linearly independent, then we use Lemma~\ref{lemma:distinct-nonspecial-element}; if $c = a$ or $d = \pm a$, then we use Theorem~\ref{theorem:common-nonspecial-element}. We also use here the fact that the last $\gamma$ for $\A^{\bullet}_n$ equals $\gamma_{n-1}\gamma_n$. 
        
        We now want to determine the subgraph of $C(\til{x})$ on the elements of the form $(a,\til{b})$ for $a \in \E'_{n-1}$, $b \in \pm \E_{n-1}$. However, we cannot distinguish between $(a,\til{b})$ and $(\til{b},\gamma_n a)$ in the graph. Hence we divide all elements into pairs such that two elements of a pair are not adjacent but they have the same sets of neighbours. Then $(a,\til{b})$ and $(\til{b},\gamma_n a)$ will get into the same pair, up to a graph isomorphism. If we keep exactly one element from each pair, then we will obtain a graph isomorphic to $C^{\bullet}(x)$.
        
        \item Let $\chi = 1$. Then $C(\til{x})$ contains $2^{n+1} - 2$ vertices, $\chi(C^{\bullet}(x))$ equals $-1$, and $C^{\bullet}(x)$ contains $2^n - 4$ vertices. There are elements $(\til{x}, \pm e_0)$ in $C(\til{x})$, but there are no elements $(x, \pm e_0)$ in $C^{\bullet}(x)$. We claim that the subgraph of $C(\til{x})$ on all elements of the form $(a,\til{b})$ for $a \in \E'_{n-1}$, $b \in \pm \E'_{n-1}$, is isomorphic to $C^{\bullet}(x)$. The required isomorphism maps $(a,\til{b})$ to $(a,b)$.
        
        Indeed, for $a,c \in \E'_{n-1}$, $b,d \in \pm \E'_{n-1}$ the elements $(a,\til{b})$ and $(c,\til{d})$ are orthogonal in $\A_{n+1}$ if and only if $(a,b)$ and $(c,d)$ are orthogonal in $\A^{\bullet}_n$. If $a,b,c,d$ are linearly independent, then we use Lemma~\ref{lemma:distinct-nonspecial-element}, and if $c = a$ or $d = \pm a$, then we use Theorem~\ref{theorem:common-nonspecial-element}. We recall here that the last $\gamma$ for $\A^{\bullet}_n$ equals $\gamma_{n-1}\gamma_n$.
        
        To construct this subgraph of $C(\til{x})$, we first find special elements and delete the elements $(x, \pm \til{e}_0)$ and $(\til{e}_0, \pm \gamma_n x)$ as for $C(x)$ with $\chi = 1$. Then we note that $(a,\til{b})$ and $(\til{b},\gamma_n a)$ are adjacent and have the same neighbours, except for special elements. Therefore, we can take all neighbours of some special element, as for $C(x)$ with $\chi = -1$, and obtain a graph which is isomorphic to $C^{\bullet}(x)$. \qedhere
    \end{enumerate}
\end{enumerate}
\end{proof}

\begin{remark}
We do not need labels of elements on the vertices of $C(x)$ and $C(\til{x})$ to apply Lemma~\ref{lemma:nonspecial-components-transformation}. Particularly, since $n \geq 4$, special elements can be determined by Lemma~\ref{lemma:find-special-elements}.
\end{remark}

We can now reduce $C(x)$ and $C(\til{x})$ to $C^{\circ}(x)$ and $C^{\bullet}(x)$. However, we cannot yet distinguish between $C(x)$ and $C(\til{x})$, so we do not know which procedure is to be applied to some specific component. The following lemma provides a method to recognize it.

\begin{lemma} \label{lemma:distinguish-types-3-and-4}
Consider $C(x)$ for $x \in \E''_n$ and $n \geq 4$.
\begin{enumerate}
    \item If its $\chi = 1$, then we consider the number of isolated columns between special elements. If there are at least six of them, then $x \in \E'_{n-1}$. Otherwise, there are only two of them, and $x = \til{y}$ for some $y \in \E'_{n-1}$.
    \item If its $\chi = -1$, then we consider the number of isolated double columns between pairs of special elements. If there are no isolated double columns, then $x \in \E'_{n-1}$. Otherwise, there are at least two of them, and $x = \til{y}$ for some $y \in \E'_{n-1}$.
\end{enumerate}
\end{lemma}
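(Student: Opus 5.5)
We classify the vertices of $C(x)$ that are adjacent to no nonspecial vertex outside a bounded pattern, using the explicit structure of Theorem~\ref{theorem:nonspecial-components} and Fig.~\ref{figure:nonspecial-components}. Since $n \geq 4$, Lemma~\ref{lemma:find-special-elements} identifies the special elements intrinsically from the graph. It therefore suffices to count, for each of the four cases ($x \in \E'_{n-1}$ or $x = \til{y}$, and $\chi = \pm 1$), the columns (pairs $(A,\pm B)$-type vertices joined only to the two special sides) that are adjacent to no other nonspecial column. We call such columns isolated.

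\textbf{Case $\chi = 1$.} Here the columns between $(x,e_0)$ and $(x,-e_0)$ come in the pairs $\{(a,b),(b,\gamma_n a)\}$ and $\{(\til a,\til b),(\til b,\gamma_n\til a)\}$, together with the two columns $(\til x,\pm\til e_0),(\til e_0,\pm\gamma_n\til x)$. The latter are always isolated by Theorem~\ref{theorem:nonspecial-components}.

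For $x \in \E'_{n-1}$, a nonspecial column attached to $(a,b)$ has dashed edges exactly according to orthogonality of $(a,b)$ with other $(c,d)$ in $\A^{\circ}_n$, by Lemma~\ref{lemma:distinct-nonspecial-element}. I will show that $C^{\circ}(x)$, which for $\chi=1$ is the corresponding component of $\Gamma_e(\A^\circ_n)$, contains vertices with no nonspecial neighbours. The candidates are $(\til{x}', \pm\til{e}_0')$-type vertices of $\A^\circ_n$, where the tilde is taken one level down. Their preimages form at least two columns, each doubled by the $(a,b)\leftrightarrow(\til a,\til b)$ duplication, giving at least $4$ more isolated columns and $6$ in total.

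For $x=\til y$, the columns are $(a,\til b)$-type with dotted edges given by orthogonality in $\A^{\bullet}_n$, where now $\chi(C^\bullet(y))=-1$ by Lemma~\ref{lemma:chi-transformation}. Every nonspecial vertex of a $\chi=-1$ component (a union of double hexagons, $n-1\geq 3$) has nonspecial neighbours by Theorem~\ref{theorem:common-nonspecial-element}, namely $(\til a,\til b)$. Hence only the two columns $(x,\pm\til e_0)$ remain isolated.

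\textbf{Case $\chi=-1$.} We argue symmetrically, with double columns around the pairs of special elements. For $x\in\E'_{n-1}$ the nonspecial vertices correspond to $C^\circ(x)$ with $\chi=-1$, where each vertex $(a,b)$ is adjacent to the nonspecial $(\til a',\til b')$ of $\A^\circ_n$. So no double column is isolated. For $x=\til y$, $C^\bullet(y)$ has $\chi=1$ and contains the isolated columns $(\til y',\pm\til e_0')$ one level down. Their lifts give at least two isolated double columns.

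The main obstacle is verifying carefully, in each of the four cases, that the lifted vertices really have no edges to other nonspecial vertices in $\A_{n+1}$. This requires combining Lemma~\ref{lemma:distinct-nonspecial-element} with the cases of Theorem~\ref{theorem:til-e_0-special-element} and Theorem~\ref{theorem:common-nonspecial-element} where components coincide up to tilde. It also requires checking that the isolation property is intrinsic to the graph once the special elements are fixed. I would first handle this by writing the explicit neighbour lists of $(a,b)$ with $b\in\{\til{x}',\dots\}$ from Table~\ref{table:orthogonality-conditions}.
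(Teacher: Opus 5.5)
Your proposal is essentially the paper's proof. You pass from $C(x)$ to $C^{\circ}(x)$, or from $C(\til{y})$ to $C^{\bullet}(y)$, via Lemma~\ref{lemma:nonspecial-components-transformation}, which already supplies the verification you list as the main obstacle; you count the two columns through $(\til{x},\pm\til{e}_0)$ plus the doubled isolated columns of the reduced component, and you rule out further isolated (double) columns. The only deviation is in that last step, where you exhibit the neighbour $(\til{a},\til{b})$ instead of using connectedness of the reduced component as the paper does. This overlooks the special vertices of $C^{\circ}(x)$ and $C^{\bullet}(y)$, which lift to nonspecial columns, and the degenerate case $x$ or $y$ equal to $e^{(n-1)}_{2^{n-2}}$ (the element $\til{e}_0$ of $\A_{n-1}$), where the reduced component is of type II and your explicit vertices do not exist. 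Connectedness (Corollary~\ref{corollary:diameters} with Lemma~\ref{lemma:special-components}) handles all of these at once.
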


\begin{proof}
We will use Lemma~\ref{lemma:nonspecial-components-transformation} for the explicit form of transformations.
\begin{enumerate}
    \item Let $\chi = 1$. If $x \in \E'_{n-1}$, then $C(x)$ is reduced to $C^{\circ}(x)$ with the same $\chi$ as in Fig.~\ref{figure:nonspecial-components-transformation}. $C(x)$ has two isolated columns which correspond to $(\til{x}, \pm \til{e}_0)$ and $(\til{e}_0, \pm \gamma_n \til{x})$, and these columns are deleted under this transformation. $C^{\circ}(x)$ has $\chi = 1$ and thus also has two isolated columns which correspond to four isolated columns in $C(x)$. Therefore, $C(x)$ has at least six isolated columns.
    
    If $x = \til{y}$ for some $y \in \E'_{n-1}$, then $C(\til{y})$ is reduced to $C^{\bullet}(y)$ with the opposite $\chi$. Since $n \geq 4$, $C^{\bullet}(y)$ is connected by Corollary~\ref{corollary:diameters}. Hence the only two isolated columns in $C(\til{y})$ correspond to $(y, \pm \til{e}_0)$ and $(\til{e}_0, \pm \gamma_n y)$.
    
    \item Let now $\chi = -1$. If $x = \til{y}$ for some $y \in \E'_{n-1}$, then $C(\til{y})$ is reduced to $C^{\bullet}(y)$ with the opposite $\chi$. $C^{\bullet}(y)$ has $\chi = 1$ and thus has at least two isolated columns. Hence $C(\til{y})$ has at least two isolated double columns.
    
    If $x \in \E'_{n-1}$, then $C(x)$ is reduced to $C^{\circ}(x)$ with the same $\chi$. Since $n \geq 4$, $C^{\circ}(x)$ is connected by Corollary~\ref{corollary:diameters}, so $C(x)$ has no isolated double columns. \qedhere
\end{enumerate}
\end{proof}

\begin{theorem} \label{theorem:construct-graphs-recursively}
Let $n \geq 4$, and we have $\Gamma_e(\A_{n+1})$ without labels of elements on its vertices. Then we can determine $\Gamma_e(\A^{\circ}_n)$ and $\Gamma_e(\A^{\bullet}_n)$.
\end{theorem}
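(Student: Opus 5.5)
We assemble $\Gamma_e(\A^{\circ}_n)$ and $\Gamma_e(\A^{\bullet}_n)$ component by component from the components of $\Gamma_e(\A_{n+1})$, using only graph-theoretic information.

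\textbf{Step 1: sort the components.} By Lemma~\ref{lemma:find-special-components}, since $n \geq 4$ we recover $\gamma_n$ and $\gamma_{n-1}$. We also locate $C(e_0)$ (when $\gamma_n = 1$) and $C(\til{e}_0)$. All remaining components are $C(x)$ with $x \in \E''_n$. For each of them we compute $\chi$ from its number of vertices, using Corollary~\ref{corollary:retrieve-n-and-chi}. We then find its special elements by Lemma~\ref{lemma:find-special-elements}, which applies because $n \geq 4$. Next, Lemma~\ref{lemma:distinguish-types-3-and-4} decides whether $x \in \E'_{n-1}$ or $x = \til{y}$ with $y \in \E'_{n-1}$. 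The criterion is the number of isolated columns (for $\chi = 1$) or isolated double columns (for $\chi = -1$) between the special elements. This splits the nonspecial components into a family $\{C(x)\}_{x \in \E'_{n-1}}$ and a family $\{C(\til{y})\}_{y \in \E'_{n-1}}$. We do not need to know which $x$ labels which component.

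\textbf{Step 2: reduce the nonspecial components.} To each component of the first family we apply the procedure of Lemma~\ref{lemma:nonspecial-components-transformation}(1) and obtain a graph isomorphic to $C^{\circ}(x)$. To each component of the second family we apply Lemma~\ref{lemma:nonspecial-components-transformation}(2) and obtain a graph isomorphic to $C^{\bullet}(y)$. The resulting graphs are pairwise vertex-disjoint. As $x$ runs over $\E'_{n-1}$, they give exactly the components $C^{\circ}(x)$ of $\Gamma_e(\A^{\circ}_n)$ and $C^{\bullet}(x)$ of $\Gamma_e(\A^{\bullet}_n)$ indexed by $x \in \E'_{n-1} = \E''_{n}(\A^{\circ}_n) \cap \A_{n-1}$, i.e.\ the components of types~\ref{item:component-type-3} and~\ref{item:component-type-4} with $c \in \A_{n-1}$, for the algebras $\A_n^{\circ}$ and $\A_n^{\bullet}$.

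\textbf{Step 3: add the missing components.} The components of $\Gamma_e(\A^{\circ}_n)$ not yet produced are $C^{\circ}(e_0)$ and $C^{\circ}(\til{e}_0)$ (here $\til{e}_0$ is taken in $\A^{\circ}_n$), together with those $C^{\circ}(\til{x})$ with $x \in \E'_{n-2}$. The last kind are missing only if we indexed incorrectly, so we check this point. The basis of $\A^{\circ}_n$ is $\E_{n-1} \cup \til{\E}_{n-1}$ with the tilde of level $n-1$. The recursion of Lemma~\ref{lemma:nonspecial-components-transformation} gives $C^{\circ}(x)$ for $x \in \E'_{n-1}$, and this set includes elements of both types~\ref{item:component-type-3} and~\ref{item:component-type-4} relative to $\A^{\circ}_n$. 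So the only components still missing are $C^{\circ}(e_0)$ and $C^{\circ}(\til{e}_0)$. Their shapes are given explicitly by Lemma~\ref{lemma:special-components} and Fig.~\ref{figure:special-components}. They depend only on the last parameter of the algebra and on the product of its last two parameters: $\gamma_n$ and $\gamma_{n-2}\gamma_n$ for $\A^{\circ}_n$, and $\gamma_{n-1}\gamma_n$ and $\gamma_{n-2}\gamma_{n-1}\gamma_n$ for $\A^{\bullet}_n$.

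\textbf{Main obstacle.} The parameter $\gamma_{n-2}$ is not directly available from Step 1, and we still need it to draw $C^{\circ}(\til{e}_0)$ and $C^{\bullet}(\til{e}_0)$. I would recover it from the reduced components themselves. For $n-1 \geq 4$, Lemma~\ref{lemma:find-special-components} can be applied to the partial graph of $\A^{\circ}_n$, which already contains all components $C^{\circ}(x)$ for $x \in \E''$. In all cases (including $n = 4$) I would read $\gamma_{n-2}$ off the $\chi$-values of the reduced components via Proposition~\ref{proposition:chi-value}. Specifically, $\chi(C^{\circ}(\til{z})) = -\gamma_{n-2}\gamma_n n(z)$ differs from $\chi(C^{\circ}(z))$ by the factor $-\gamma_{n-2}$. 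Hence comparing $\chi$ over the components of type~\ref{item:component-type-3} and type~\ref{item:component-type-4} for $\A^{\circ}_n$ determines $\gamma_{n-2}$. Distinguishing these two types uses Lemma~\ref{lemma:distinguish-types-3-and-4} when $n-1 \geq 4$, and Lemma~\ref{lemma:low-dimensional-components} otherwise. If this works, we add the two explicit components, and the disjoint unions are $\Gamma_e(\A^{\circ}_n)$ and $\Gamma_e(\A^{\bullet}_n)$ up to isomorphism.
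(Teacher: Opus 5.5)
Your Steps 1 and 2 match the paper's proof. Step 3, however, misidentifies which components are still missing, and the rest of the proposal rests on that mistake.

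\textbf{Step 3 double-counts a component.} Since $\A^{\circ}_n = \A_{n-1}\{\gamma_n\}$, the components of $\Gamma_e(\A^{\circ}_n)$ are the $C^{\circ}(y)$ with $y \in \E_{n-1} = \{e_0\} \cup \E'_{n-1}$. The $\til{e}_0$ relevant to this graph is the tilde of level $n-1$, namely $u = e^{(n-1)}_{2^{n-2}}$. (The element $(0,e_0)$ of $\A^{\circ}_n$ itself indexes no component, because all products $ab$ lie in $\A_{n-1}$.) But $u \in \E'_{n-1}$: the $2^{n-1}-1$ elements of $\E'_{n-1}$ are $u$ together with the type~\ref{item:component-type-3} and type~\ref{item:component-type-4} indices relative to $\A^{\circ}_n$. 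Your identification $\E'_{n-1} = \E''_n(\A^{\circ}_n) \cap \A_{n-1}$ in Step 2 overlooks this. Hence $C^{\circ}(\til{e}_0) = C^{\circ}(u)$ is already produced in Step 2, as the reduction by Lemma~\ref{lemma:nonspecial-components-transformation}(1) of the component $C(u)$ of $\Gamma_e(\A_{n+1})$. Likewise $C^{\bullet}(u)$ is the reduction of $C(\til{u})$. Adding these components again gives one component too many: $2^{n-1}+1$ when $\gamma_n = 1$ and $2^{n-1}$ when $\gamma_n = -1$. Corollary~\ref{corollary:retrieve-n-and-chi}, applied to $\A^{\circ}_n$, allows only $2^{n-1}$ and $2^{n-1}-1$ respectively. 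The only genuinely missing components are $C^{\circ}(e_0)$ and $C^{\bullet}(e_0)$. By Lemma~\ref{lemma:special-components}, their shape depends only on the last parameter, $\gamma_n$ resp.\ $\gamma_{n-1}\gamma_n$, and both are known from Step 1. This is exactly how the paper concludes.

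\textbf{The ``main obstacle'' is an artifact.} You never need $\gamma_{n-2}$. This matters, because the proposed remedy would fail anyway for $n = 4$. There $\gamma_{n-2} = \gamma_2$ cannot be recovered from the graph at all. For instance, $\A_5\{1,-1,-1,\gamma_3,\gamma_4\}$ and $\A_5\{-1,-1,1,\gamma_3,\gamma_4\}$ are isomorphic by Corollary~\ref{lemma:isomorphic-parameter-sequences}, hence have isomorphic graphs by Theorem~\ref{theorem:isomorphic-graphs}, yet their $\gamma_2$ differ. Moreover, Lemma~\ref{lemma:low-dimensional-components} shows that for $\Gamma_e(\A^{\circ}_4)$ the structure of a component of type~\ref{item:component-type-3} or~\ref{item:component-type-4} depends only on $\chi$, not on its type, so your fallback cannot separate the two types. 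For $n \geq 5$, comparing $\chi(C^{\circ}(z))$ with $\chi(C^{\circ}(\til{z}))$ componentwise would also require the pairing $z \leftrightarrow \til{z}$, which the unlabeled graph does not provide.

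If you delete the re-added components and the $\gamma_{n-2}$ step, adding only $C^{\circ}(e_0)$ and $C^{\bullet}(e_0)$, your argument becomes the paper's proof.
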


\begin{proof}
By Lemma~\ref{lemma:find-special-components}, we can determine $\gamma_{n-1}$ and $\gamma_n$ and find $C(e_0)$ and $C(\til{e}_0)$, if nonempty. Then, by definition, we know the last $\gamma$ for both $\A^{\circ}_n$ and $\A^{\bullet}_n$. Hence we automatically know $C^{\circ}(e_0)$ and $C^{\bullet}(e_0)$ from Lemma~\ref{lemma:special-components}, and it is sufficient to construct $C^{\circ}(x)$ and $C^{\bullet}(x)$ for all $x \in \E'_{n-1}$.

Let now $C(x)$ be a connected component of $\Gamma_e(\A_{n+1})$ for some $x \in \E''_n$. By Corollary~\ref{corollary:retrieve-n-and-chi}, we know $\chi(C(x))$. By Lemma~\ref{lemma:find-special-elements}, we can find special elements of $C(x)$ and then use Lemma~\ref{lemma:distinguish-types-3-and-4} to say whether $x \in \E'_{n-1}$ or $x = \til{y}$ for some $y \in \E'_{n-1}$. Then we use Lemma~\ref{lemma:nonspecial-components-transformation} and obtain either $C^{\circ}(x)$ or $C^{\bullet}(y)$. Thus we get all connected components of $\Gamma_e(\A^{\circ}_n)$ and $\Gamma_e(\A^{\bullet}_n)$.
\end{proof}

\subsection{Isomorphic graphs and isomorphic algebras}

We will need the following lemma concerning isomorphisms of Cayley--Dickson algebras. We remark that in~\cite{eakin} it is formulated for an arbitrary field~$\mathbb{F}$, and we consider only the case when $\mathbb{F} = \mathbb{R}$.

\begin{lemma}[{\cite[Corollary~2.6]{eakin}}] \label{lemma:recursive-isomorphism}
Let $n \geq 3$, $\gamma = \{ \gamma_0, \dots, \gamma_n \}$ and $\lambda = \{ \lambda_0, \dots, \lambda_n \}$ be two sequences of parameters. Then $\A^{\gamma}_{n+1} \cong \A^{\lambda}_{n+1}$ if and only if $\gamma_n = \lambda_n$ and $\A^{\gamma}_n \cong \A^{\lambda}_n$.
\end{lemma}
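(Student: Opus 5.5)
The ``if'' direction is a componentwise construction. Suppose $\gamma_n = \lambda_n$ and $\psi \colon \A^{\gamma}_n \to \A^{\lambda}_n$ is an isomorphism. I would define $\Psi((a,b)) = (\psi(a), \psi(b))$. Any algebra isomorphism of these unital quadratic algebras commutes with the involution, because $\bar a = 2\Re(a)e_0 - a$ and $\Re(a)$ is determined intrinsically by the quadratic equation $a^2 - 2\Re(a)a + n(a) = 0$, which holds since the involution is regular. Hence $\psi(\bar d) = \overline{\psi(d)}$. Substituting into $(a,b)(c,d) = (ac + \gamma_n \bar d b,\, da + b\bar c)$ and using $\gamma_n = \lambda_n$ then shows that $\Psi$ is multiplicative. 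This direction involves no real difficulty.

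For the ``only if'' direction, let $\Phi \colon \A^{\gamma}_{n+1} \to \A^{\lambda}_{n+1}$ be an isomorphism. The plan has three steps. First, show that $\Phi$ can be composed with an automorphism of $\A^{\lambda}_{n+1}$ so that it maps the distinguished subalgebra $\A^{\gamma}_n \times 0$ onto $\A^{\lambda}_n \times 0$. Second, conclude that $\Phi$ maps the orthogonal complement $0 \times \A^{\gamma}_n$ onto $0 \times \A^{\lambda}_n$, since $\Phi$ preserves the norm form and hence $\langle\cdot,\cdot\rangle$ by Proposition~\ref{proposition:lambda-form}. In particular $\Phi(\til{e}_0) = (0, u)$ with $u \in \A^{\lambda}_n$. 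Third, read off the parameters. The restriction $\Phi|_{\A^{\gamma}_n}$ gives $\A^{\gamma}_n \cong \A^{\lambda}_n$. Comparing $\til{e}_0^{\,2} = \gamma_n e_0$ with $(0,u)^2 = \lambda_n n(u) e_0$, and using the fact that the map $a \mapsto \Phi(\til{a})$ intertwines the two doubling structures, should force $n(u)$ to have the sign needed for $\gamma_n = \lambda_n$. Here I would use that $\lambda_n$ is only defined up to a positive factor, by~\cite[Exercise~2.5.1]{mccrimmon}.

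The main obstacle is the first step: giving an intrinsic algebraic characterization of the subalgebra $\A_n \times 0$, or equivalently of the line $\mathbb{R}\til{e}_0$, up to automorphism, valid as soon as $n \geq 3$. This is exactly where the hypothesis $n \geq 3$ enters, since for $n \leq 2$ the statement fails (for example $\A_2\{1,-1\} \cong \A_2\{-1,1\}$). My first attempt would be to characterize $\til{e}_0$ through alternativity. For $n \geq 3$, the subalgebra $\A_n$ is no longer determined by its norm form, and generic elements of $\A_{n+1}$ fail to alternate strongly with one another. I would try to show that the pure elements $u$ such that
\[
x \mapsto xu
\]
satisfies the identities of Lemma~\ref{lemma:octonionic-subalgebra} for all $x \in u^{\perp}$ form a single orbit under the automorphism group containing $\til{e}_0$. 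Such $u$ would then have $u^{\perp} \cap \{x : xu \in u^{\perp}\}$ equal to a copy of $\A'_n$.

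If this direct route becomes too technical, I would fall back on the argument of Eakin and Sathaye~\cite{eakin}. Their argument proceeds by analyzing the derivation algebra, or equivalently the automorphism group, of $\A_{n+1}$ for $n \geq 3$. It shows that every automorphism stabilizes the canonical copy of $\A_n$ up to the ``$S_3$-type'' symmetries, which act trivially on the isomorphism class of $\A_n$ and on $\gamma_n$. This gives the first step directly, and the remaining two steps go through as described above.
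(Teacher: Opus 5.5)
The paper does not prove this lemma; it quotes it from Eakin and Sathaye (their Corollary~2.6, specialized to $\mathbb{R}$). Your ``if'' direction is correct. It is the componentwise argument the paper itself uses in Corollary~\ref{corollary:low-dimensional-isomorphic}.

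The ``only if'' direction is not proved. Step~1 is essentially the whole content of the lemma, and you leave it open. The route you sketch for it fails as stated. For $u=\til{e}_0$ one has $\langle (a,b),\til{e}_0\rangle=-\gamma_n\Re(b)$ and $(a,b)\til{e}_0=(\gamma_n b,a)$. Hence $u^{\perp}\cap\{x : xu\in u^{\perp}\}=\A'_n\times\A'_n$, the space of doubly pure elements. It has dimension $2^{n+1}-2$, so it is not a copy of $\A'_n$.

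More fundamentally, nothing built from $\til{e}_0$ alone can recover $\A_n\times 0$. When $\gamma_n=-1$, consider the linear map that fixes $e_0$ and $\til{e}_0$ and sends a doubly pure $(a,b)$ to $\bigl(-\tfrac12 a-\tfrac{\sqrt3}{2}b,\ \tfrac{\sqrt3}{2}a-\tfrac12 b\bigr)$. It is an automorphism of $\A_{n+1}$ that fixes $\til{e}_0$ but moves $\A_n\times 0$.

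Your fallback does not close the gap either. A description of $\mathrm{Aut}(\A^{\lambda}_{n+1})$ does not by itself tell you where an isomorphism from a \emph{different} algebra $\A^{\gamma}_{n+1}$ sends $\A^{\gamma}_n\times 0$. What you need is the statement that this image is, up to an automorphism, the canonical copy. That is exactly the nontrivial input the paper takes from Eakin and Sathaye, and once you cite it, Steps~1--3 are redundant.

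Step~3 is also only asserted (``should force''). The remark that $\lambda_n$ is defined up to a positive factor does not address the sign. Given Steps~1--2, it can be completed as follows.

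Write $\Phi(a,0)=(\psi(a),0)$ and $\Phi(0,e_0)=(0,u)$. From $(0,b)=(b,0)(0,e_0)$ you get $\Phi(0,b)=(0,u\psi(b))$. Applying $\Phi$ to $(a,0)(0,b)=(0,ba)$ then gives $(uy)x=u(yx)$ for all $x,y\in\A^{\lambda}_n$. So $u$ lies in the left nucleus of $\A^{\lambda}_n$, which is $\mathbb{R}e_0$ for $n\geq 3$. Comparing $\Phi(\til{e}_0^{\,2})=\gamma_n e_0$ with $(0,u)^2=\lambda_n n(u)e_0=\lambda_n u_0^2e_0$ yields $\gamma_n=\lambda_n u_0^2$, so $\gamma_n$ and $\lambda_n$ have the same sign.

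This is a second place where $n\geq 3$ is genuinely needed. For associative $\A_n$, the map $(a,b)\mapsto(a,ub)$ with $n(u)=-1$ is an isomorphism $\hat{\mathbb{H}}\{1\}\to\hat{\mathbb{H}}\{-1\}$.
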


\begin{corollary} \label{lemma:isomorphic-parameter-sequences}
Let $n \geq 0$, $\gamma = \{ \gamma_0, \dots, \gamma_n \}$ and $\lambda = \{ \lambda_0, \dots, \lambda_n \}$ be two sequences of parameters. Then $\A^{\gamma}_{n+1} \cong \A^{\lambda}_{n+1}$ if and only if the following conditions are satisfied:
\begin{enumerate}
    \item $\gamma_k = \lambda_k$ for all $k = 3, \dots, n$;
    \item there exists $1$ among $\gamma_0, \gamma_1, \gamma_2$ if and only if there exists $1$ among $\lambda_0, \lambda_1, \lambda_2$.
\end{enumerate}
\end{corollary}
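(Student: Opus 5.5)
The plan is to induct on $n$, using Lemma~\ref{lemma:recursive-isomorphism} for the step and the classification of real composition algebras for the base $n \leq 2$ (i.e.\ algebras $\A_{n+1}$ of dimension at most $8$).

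\textbf{Base case.} For $n \leq 2$ condition (1) is vacuous, so the claim reads: $\A^{\gamma}_{n+1} \cong \A^{\lambda}_{n+1}$ if and only if both parameter sequences are all $-1$, or both contain a $1$. We must also read $\gamma_0,\dots,\gamma_2$ as only those parameters that exist. By Proposition~\ref{proposition:split-algebras} and the surrounding discussion, every real $\A_{n+1}$ with $n \leq 2$ is isomorphic either to $\Main_{n+1}$ or to $\Hyp_{n+1}$. We identify which one from the norm. If all parameters equal $-1$, the algebra is $\Main_{n+1}$, whose norm is anisotropic, so it is a division algebra. If some $\gamma_k = 1$, then the element $e_0 + e_{2^k}$ of the subalgebra $\A_{k+1}$ has norm $1 - \gamma_k = 0$ by the inductive norm formula, so the norm is isotropic and the algebra is split. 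Here $e_{2^k}$ denotes $(0,e_0)$ in $\A_{k+1}$, viewed inside $\A_{n+1}$. A division algebra is never isomorphic to an algebra with nonzero zero divisors, which gives the ``only if'' direction. For the ``if'' direction, two split composition algebras of the same dimension are isomorphic, by the cited \cite[Theorem 2.6.2]{mccrimmon}.

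\textbf{Inductive step.} Let $n \geq 3$ and assume the claim for $n-1$. By Lemma~\ref{lemma:recursive-isomorphism}, $\A^{\gamma}_{n+1} \cong \A^{\lambda}_{n+1}$ if and only if $\gamma_n = \lambda_n$ and $\A^{\gamma}_n \cong \A^{\lambda}_n$. By the induction hypothesis, the second condition is equivalent to $\gamma_k = \lambda_k$ for $3 \leq k \leq n-1$ together with condition (2). Combining these with $\gamma_n = \lambda_n$ gives exactly (1) and (2) for $n$.

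\textbf{Expected obstacle.} The only delicate point is the base case, in particular handling $n = 0$ (and $n = 1$), where some of $\gamma_0,\gamma_1,\gamma_2$ do not exist. The isotropy argument above covers these cases as well: at $n = 0$ it compares $\mathbb{C}$ with $\hat{\mathbb{C}}$. The applicability of Lemma~\ref{lemma:recursive-isomorphism} is the other check: it requires its own index to be at least $3$, which holds throughout the step since the step is only applied for $n \geq 3$.
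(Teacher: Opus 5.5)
Your proof is correct and follows the paper's argument: induction on $n$, with the base case $n \le 2$ reduced to the dichotomy $\Main_{n+1}$ versus $\Hyp_{n+1}$, and the inductive step given by Lemma~\ref{lemma:recursive-isomorphism}. The only difference is that you spell out the base case: you exhibit the isotropic vector $(e_0,e_0)$ in $\A_{k+1}$ when $\gamma_k = 1$ and use division versus non-division to separate the two classes, whereas the paper leaves this implicit in its appeal to Proposition~\ref{proposition:split-algebras}.
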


\begin{proof}
We will use the induction on $n$.
\begin{enumerate}
    \item If $0 \leq n \leq 2$, then the statement follows immediately from Proposition~\ref{proposition:split-algebras}, since for $n \leq 2$ the algebra $\A_{n+1}$ is isomorphic either to $\Main_{n+1}$ (all parameters are equal to~$-1$) or to $\Hyp_{n+1}$ (at least one of the parameters equals $1$).
    \item Let now $n \geq 3$. By Lemma~\ref{lemma:recursive-isomorphism}, $\A^{\gamma}_{n+1} \cong \A^{\lambda}_{n+1}$ if and only if $\gamma_n = \lambda_n$ and $\A^{\gamma}_n \cong \A^{\lambda}_n$. It remains to apply the induction hypothesis to $\A^{\gamma}_n$ and $\A^{\lambda}_n$. \qedhere
\end{enumerate}
\end{proof}

\begin{theorem} \label{theorem:isomorphic-algebras}
Let $n \geq 3$, and we have $\Gamma_e(\A_{n+1})$ without labels of elements on its vertices. Then we can find all $\gamma_k$, $k = 3, \dots, n$. The first three parameters $\gamma_0, \gamma_1, \gamma_2$ are determined up to isomorphism of $\A_3$.
\end{theorem}

\begin{proof}
We prove the theorem by induction on $n$.
\begin{enumerate}
    \item If $n = 3$, then we have our statement from Corollary~\ref{corollary:low-dimensional-isomorphic}.
    \item Let now $n \geq 4$. By Lemma~\ref{lemma:find-special-components}, we can find $\gamma_{n-1}$ and $\gamma_n$. Then we use Theorem~\ref{theorem:construct-graphs-recursively} to construct $\Gamma_e(\A^{\circ}_n)$.  By the induction hypothesis, from $\Gamma_e(\A^{\circ}_n)$ we can find all $\gamma_k$, $k = 3, \dots, n-2$, and $\gamma_0, \gamma_1, \gamma_2$ are determined up to isomorphism of $\A_3$. \qedhere
\end{enumerate}
\end{proof}

\begin{corollary} \label{corollary:isomorphic-algebras}
Let $n \geq 3$, $\gamma = \{ \gamma_0, \dots, \gamma_n \}$ and $\lambda = \{ \lambda_0, \dots, \lambda_n \}$ be two sequences of parameters. If $\Gamma_e(\A^{\gamma}_{n+1})$ and $\Gamma_e(\A^{\lambda}_{n+1})$ are isomorphic, then $\A^{\gamma}_{n+1}$ and $\A^{\lambda}_{n+1}$ are also isomorphic.
\end{corollary}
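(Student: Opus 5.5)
The plan is to combine Theorem~\ref{theorem:isomorphic-algebras} with Corollary~\ref{lemma:isomorphic-parameter-sequences}. The key point is that the reconstruction procedure in Theorem~\ref{theorem:isomorphic-algebras} uses only the unlabeled graph. Everything it extracts is therefore a graph invariant: the number of connected components, their sizes and values of $\chi$, the special elements, and the reduced graphs $\Gamma_e(\A^{\circ}_n)$ and $\Gamma_e(\A^{\bullet}_n)$ up to isomorphism. So two isomorphic graphs yield the same recovered data.

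I would argue by induction on $n \geq 3$, showing that $\Gamma_e(\A^{\gamma}_{n+1}) \cong \Gamma_e(\A^{\lambda}_{n+1})$ implies that conditions (1) and (2) of Corollary~\ref{lemma:isomorphic-parameter-sequences} hold. For $n = 3$, the graphs have nonempty vertex sets by Theorem~\ref{theorem:low-dimensional-graphs}, since every case for $n=3$ contains double hexagons or bundles. Hence Corollary~\ref{corollary:low-dimensional-isomorphic} applies directly.

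For $n \geq 4$, I would apply Lemma~\ref{lemma:find-special-components}. Counting components gives $\gamma_n = \lambda_n$, and recognizing $C(\til{e}_0)$ gives $\gamma_{n-1} = \lambda_{n-1}$. Both are read off isomorphism-invariant features, namely component counts and the presence of a $\chi=-1$ component without special elements.

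Next I would transport the graph isomorphism through Theorem~\ref{theorem:construct-graphs-recursively}. A graph isomorphism $\Gamma_e(\A^{\gamma}_{n+1}) \to \Gamma_e(\A^{\lambda}_{n+1})$ maps components to components and preserves $\chi$ (read from component sizes via Corollary~\ref{corollary:retrieve-n-and-chi}). It maps special elements to special elements because they are uniquely determined for $n \geq 4$ by Lemma~\ref{lemma:find-special-elements}. It preserves the type~III/IV distinction of Lemma~\ref{lemma:distinguish-types-3-and-4}, since that is defined by counting isolated columns. The reduction steps of Lemma~\ref{lemma:nonspecial-components-transformation} are described purely combinatorially: neighbours of a special vertex, pairs of nonadjacent vertices with equal neighbourhoods, and isolated columns. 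Their outputs are therefore isomorphic for isomorphic inputs, and we obtain $\Gamma_e((\A^{\gamma})^{\circ}_n) \cong \Gamma_e((\A^{\lambda})^{\circ}_n)$.

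Since $(\A^{\gamma})^{\circ}_n$ has parameters $\gamma_0,\dots,\gamma_{n-2},\gamma_n$, the induction hypothesis gives $\gamma_k = \lambda_k$ for $3 \leq k \leq n-2$, together with the equivalence of condition (2) on $\gamma_0,\gamma_1,\gamma_2$. With $\gamma_{n-1}=\lambda_{n-1}$ and $\gamma_n=\lambda_n$ already known, Corollary~\ref{lemma:isomorphic-parameter-sequences} yields $\A^{\gamma}_{n+1} \cong \A^{\lambda}_{n+1}$. For $n=4$, the reduced algebra has $n-1=3$, which is exactly the base case.

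The main obstacle is the well-definedness claim in the transport step. The reduction involves arbitrary choices: which special element to pick, and which member of each pair to keep. I have to check that the outcome is independent of these choices up to isomorphism. Lemma~\ref{lemma:nonspecial-components-transformation} asserts that any choice yields a graph isomorphic to $C^{\circ}(x)$ or $C^{\bullet}(x)$, and that settles it. I would state explicitly that the reduced graph depends only on the isomorphism class of the component, so the argument does not rely on vertex labels.
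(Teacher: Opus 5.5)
Your proposal is correct and follows the paper's route. The paper proves this corollary in one line by combining Theorem~\ref{theorem:isomorphic-algebras} with Corollary~\ref{lemma:isomorphic-parameter-sequences}, and your induction is exactly the proof of Theorem~\ref{theorem:isomorphic-algebras} unpacked: the base case via Corollary~\ref{corollary:low-dimensional-isomorphic}, then Lemma~\ref{lemma:find-special-components} and the reduction of Theorem~\ref{theorem:construct-graphs-recursively} to $\Gamma_e(\A^{\circ}_n)$. Your added point, that each reconstruction step depends only on isomorphism classes of components, makes precise what the paper leaves implicit.
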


\begin{proof}
Follows immediately from Theorem~\ref{theorem:isomorphic-algebras} and Corollary~\ref{lemma:isomorphic-parameter-sequences}.
\end{proof}

For $n \geq 3$ the converse to Corollary~\ref{corollary:isomorphic-algebras} is also true.

\begin{theorem} \label{theorem:isomorphic-graphs}
Let $n \geq 3$, $\gamma = \{ \gamma_0, \dots, \gamma_n \}$ and $\lambda = \{ \lambda_0, \dots, \lambda_n \}$ be two sequences of parameters. Assume that $\A^{\gamma}_{n+1}$ and $\A^{\lambda}_{n+1}$ are isomorphic. Then $\Gamma_e(\A^{\gamma}_{n+1})$ and $\Gamma_e(\A^{\lambda}_{n+1})$ are also isomorphic.
\end{theorem}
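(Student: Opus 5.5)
We will argue by induction on $n$, showing that the isomorphism type of $\Gamma_e(\A_{n+1})$ depends only on the isomorphism invariants singled out in Corollary~\ref{lemma:isomorphic-parameter-sequences}. These invariants are the parameters $\gamma_3, \dots, \gamma_n$, together with whether some $\gamma_k = 1$ for $k \le 2$. By that corollary, $\A^{\gamma}_{n+1} \cong \A^{\lambda}_{n+1}$ gives $\gamma_k = \lambda_k$ for $k \geq 3$ and the same ``split/nonsplit'' status of the first three parameters. So it suffices to prove the following: for $n \geq 3$ the graph $\Gamma_e(\A_{n+1})$ is determined up to isomorphism by these data.

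The base case $n = 3$ is read off from Theorem~\ref{theorem:low-dimensional-graphs}. There the graph depends only on $\gamma_3$ and on whether one of $\gamma_0,\gamma_1,\gamma_2$ equals $1$. This is precisely the invariant data.

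For the inductive step, let $n \geq 4$. We use the explicit description of all components. $C(e_0)$ and $C(\til{e}_0)$ are given by Lemma~\ref{lemma:special-components}, and their shapes depend only on $\gamma_n$ and $\gamma_{n-1}\gamma_n$. For $x \in \E'_{n-1}$, the components $C(x)$ and $C(\til{x})$ are described by Theorem~\ref{theorem:nonspecial-components} and Fig.~\ref{figure:nonspecial-components}. Their shape is fixed by $\chi$, except for the dashed and dotted edges. Those edges are exactly the edges of the components $C^{\circ}(x)$ of $\Gamma_e(\A^{\circ}_n)$ and $C^{\bullet}(x)$ of $\Gamma_e(\A^{\bullet}_n)$, via Lemma~\ref{lemma:distinct-nonspecial-element}. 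The values of $\chi$ come from Proposition~\ref{proposition:chi-value}, and they are related to the smaller graphs by Lemma~\ref{lemma:chi-transformation}. Hence $\Gamma_e(\A_{n+1})$ is obtained by a fixed ``blow-up'' construction from three inputs: the pair of graphs $\Gamma_e(\A^{\circ}_n)$ and $\Gamma_e(\A^{\bullet}_n)$, together with $\gamma_{n-1}$ and $\gamma_n$. This construction is the inverse of Lemma~\ref{lemma:nonspecial-components-transformation}: each nonspecial vertex is replaced by its group of four almost-equivalent vertices, and special vertices and isolated columns are attached. Consequently it suffices to show two things. First, the blow-up commutes with graph isomorphisms of the inputs. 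Second, these inputs are isomorphism invariants of the algebra.

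For the second point, note that $\A^{\circ}_n = \A_n\{\gamma_0,\dots,\gamma_{n-2},\gamma_n\}$ and $\A^{\bullet}_n = \A_n\{\gamma_0,\dots,\gamma_{n-2},\gamma_{n-1}\gamma_n\}$. For the two algebras, Corollary~\ref{lemma:isomorphic-parameter-sequences} gives $\gamma_{n-1}=\lambda_{n-1}$ and $\gamma_n=\lambda_n$; here $n-1 \geq 3$. It also shows that the remaining data agree. So the $\circ$-algebras are isomorphic, and likewise the $\bullet$-algebras, again by that corollary. By the induction hypothesis (with $n-1 \geq 3$), their graphs are isomorphic.

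The main obstacle is the first point. A graph isomorphism of $\Gamma_e(\A^{\circ}_n)$ need not map $C^{\circ}(x)$ to $C^{\circ}(x)$. It may permute components with the same $\chi$, and within a component it may move special vertices. To handle this, we will extend an isomorphism component by component. Components of $\Gamma_e(\A^{\circ}_n)$ with equal $\chi$ are mapped to components with equal $\chi$, and by Lemma~\ref{lemma:find-special-elements} special vertices go to special vertices (for $n-1 \geq 4$; for $n-1=3$ any choice works up to automorphism). Each vertex of the small component then determines its block of four vertices in the big component, and the blocks are identified by the same labelling pattern. We must also check that the pairing between $C(x)$ and $C(\til{x})$ does not matter. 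It does not, since $\Gamma_e(\A_{n+1})$ is a disjoint union of components, and each $C(x)$ depends only on $C^{\circ}(x)$, each $C(\til x)$ only on $C^{\bullet}(x)$. Note that $C(\til x)$ uses the complementary value of $\chi$, and the multiset of component types is preserved by Corollary~\ref{corollary:components-chi}. The blow-ups are therefore isomorphic componentwise, which completes the induction.
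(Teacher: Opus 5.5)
Your skeleton matches the paper's proof. You induct on $n$, read off the base case $n=3$ from Theorem~\ref{theorem:low-dimensional-graphs}, and use Corollary~\ref{lemma:isomorphic-parameter-sequences} to get $\A^{\gamma,\circ}_n\cong\A^{\lambda,\circ}_n$ and $\A^{\gamma,\bullet}_n\cong\A^{\lambda,\bullet}_n$. You then apply the induction hypothesis and reassemble via Lemma~\ref{lemma:special-components} and Theorem~\ref{theorem:nonspecial-components}.

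You diverge at the last step, and that is where your argument has a hole. The paper never extends an arbitrary isomorphism. For $n=4$, every component of $\Gamma_e(\A^{\circ}_4)$ and $\Gamma_e(\A^{\bullet}_4)$ other than $C(e_0)$ is a double hexagon or a bundle, so its blow-up depends only on $\chi$. For $n\ge 5$, the paper works with the labelled graphs produced by the construction.

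You want every isomorphism to extend, but preserving $\chi$ and the special vertices does not make it respect the data the blow-up actually uses. That data is:
\begin{itemize}
\item the pairing $(a,b)\leftrightarrow(b,\gamma_n a)$ in $C^{\circ}(x)$, resp.\ $(a,b)\leftrightarrow(b,\gamma_{n-1}\gamma_n a)$ in $C^{\bullet}(x)$;
\item when $\chi(C(x))=-1$, the sign in $ab=\pm n(b)x$, which decides whether $(a,b)$ or $(\til{a},\til{b})$ is joined to $(\til{x},\til{e}_0)$ and $(\til{e}_0,\gamma_n\til{x})$.
\end{itemize}
Your phrase ``the blocks are identified by the same labelling pattern'' assumes exactly what must be shown. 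There is also a miscount: each vertex of $C^{\circ}(x)$ yields two vertices of $C(x)$, not four, since $2^{n+1}-8=2(2^n-4)$. The groups of four correspond to the pairs above.

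The gap closes once you describe the four blow-ups intrinsically.
\begin{itemize}
\item If $\chi(C(\til{x}))=-1$, then $C(\til{x})\cong C^{\bullet}(x)[\overline{K_2}]$, i.e.\ every vertex is doubled into two nonadjacent twins. This follows from the proof of Lemma~\ref{lemma:nonspecial-components-transformation}.
\item If $\chi(C(\til{x}))=1$, delete the two specials and the two isolated columns. What remains is $C^{\bullet}(x)[K_2]$, and the two specials are joined to opposite ends of every $K_2$. The two ends have the same closed neighbourhood off the specials, so which end is used is irrelevant.
\item If $\chi(C(x))=-1$, the pairs $\{(a,b),(b,\gamma_n a)\}$ are open twins in $C^{\circ}(x)$. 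The only edges of $C(x)$ not induced from $C^{\circ}(x)$ are the $K_{2,2}$ between $\{(a,b),(b,\gamma_n a)\}$ and $\{(\til{a},\til{b}),(\til{b},\gamma_n\til{a})\}$, plus the edges to the four new specials. Exchanging $(a,b)\leftrightarrow(\til{a},\til{b})$ on one sign class is an automorphism of the nonspecial part, so the sign partition does not matter. Different pairings inside a twin class differ by an automorphism of $C^{\circ}(x)$.
\item If $\chi(C(x))=1$, the columns form a perfect matching between $N((x,e_0))$ and $N((x,-e_0))$ by edges whose ends have the same neighbours apart from $(x,\pm e_0)$. Any two such matchings differ by an automorphism. $C(x)$ is then obtained by duplicating each column into two nonadjacent copies with the same neighbouring columns and adding two isolated columns. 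Only this case needs the specials of $C^{\circ}(x)$. Lemma~\ref{lemma:find-special-elements} fixes them, or degrees do when $x=e^{(n-1)}_{2^{n-2}}$.
\end{itemize}
Finally, note that $C^{\circ}(e_0)$, which is not blown up, is the only $\chi=1$ component without vertices of degree $2$, so any isomorphism maps it to itself. With these additions your extension argument is complete, and it is more explicit than the paper's appeal to labels.
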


\begin{proof}
We will use the induction on $n$.
\begin{enumerate}
    \item If $n = 3$, then, by Corollary~\ref{lemma:isomorphic-parameter-sequences}, $\gamma_3 = \lambda_3$, and there exists $1$ among $\gamma_0, \gamma_1, \gamma_2$ if and only if there exists $1$ among $\lambda_0, \lambda_1, \lambda_2$. Then the explicit form of $\Gamma_e(\A^{\gamma}_{n+1})$ and $\Gamma_e(\A^{\lambda}_{n+1})$ in Theorem~\ref{theorem:low-dimensional-graphs} shows that these graphs are isomorphic.
    
    \item If $n \geq 4$, then it follows from Corollary~\ref{lemma:isomorphic-parameter-sequences} that $\gamma_k = \lambda_k$ for all $k = 3, \dots, n$, and there exists $1$ among $\gamma_0, \gamma_1, \gamma_2$ if and only if there exists $1$ among $\lambda_0, \lambda_1, \lambda_2$. Hence $\A^{\gamma,\circ}_n \cong \A^{\lambda,\circ}_n$ and $\A^{\gamma,\bullet}_n \cong \A^{\lambda,\bullet}_n$. By the inductive hypothesis, $\Gamma_e(\A^{\gamma,\circ}_n) \cong \Gamma_e(\A^{\lambda,\circ}_n)$ and $\Gamma_e(\A^{\gamma,\bullet}_n) \cong \Gamma_e(\A^{\lambda,\bullet}_n)$. Then we use Lemma~\ref{lemma:special-components} and Theorem~\ref{theorem:nonspecial-components} to construct $\Gamma_e(\A^{\gamma}_{n+1})$ and $\Gamma_e(\A^{\lambda}_{n+1})$. We now show that Theorem~\ref{theorem:nonspecial-components} can be applied correctly, and $\Gamma_e(\A^{\gamma}_{n+1}) \cong \Gamma_e(\A^{\lambda}_{n+1})$.
    
    Assume first that $n = 4$, and we want to construct $\Gamma_e(\A_5)$ from $\Gamma_e(\A^{\circ}_4)$ and $\Gamma_e(\A^{\bullet}_4)$. It follows from Theorem~\ref{theorem:low-dimensional-graphs} that any connected component of $\Gamma_e(\A^{\circ}_4)$ and $\Gamma_e(\A^{\bullet}_4)$, except for $C(e_0)$, is either a double hexagon or a bundle of hexagons. Under the transformation from Theorem~\ref{theorem:nonspecial-components} every edge of a double hexagon will become a dotted or a dashed edge in the next graph, since it connects doubly pure vertices with linearly independent components. Meanwhile, none of the edges of a bundle of hexagons will result in the next graph.
    
    The value of $\chi$ of the new connected component can be determined by Lemma~\ref{lemma:chi-transformation}. Thus every connected component of $\Gamma_e(\A^{\circ}_4)$ is transformed in a certain way which depends only on its $\chi$. The same holds for any connected component of $\Gamma_e(\A^{\bullet}_4)$. Let $\Gamma_e(\A^{\gamma,\circ}_4) \cong \Gamma_e(\A^{\lambda,\circ}_4)$ and $\Gamma_e(\A^{\gamma,\bullet}_4) \cong \Gamma_e(\A^{\lambda,\bullet}_4)$. Theorem~\ref{theorem:low-dimensional-graphs} transforms $\Gamma_e(\A^{\gamma,\circ}_4)$ and $\Gamma_e(\A^{\gamma,\bullet}_4)$ into $\Gamma_e(\A^{\gamma}_5)$ in the same way as it transforms $\Gamma_e(\A^{\lambda,\circ}_4)$ and $\Gamma_e(\A^{\lambda,\bullet}_4)$ into $\Gamma_e(\A^{\lambda}_5)$. Hence $\Gamma_e(\A^{\gamma}_5) \cong \Gamma_e(\A^{\lambda}_5)$.
    
    Let now $n \geq 5$. We again use Lemma~\ref{lemma:chi-transformation} for the transformation of $\chi$. Since we constructed $\Gamma_e(\A^{\gamma,\circ}_n)$ and $\Gamma_e(\A^{\lambda,\circ}_n)$ manually in the previous step, we know types of all their connected components and of all their vertices. Moreover, we can establish the complete relationship between them. Therefore, Theorem~\ref{theorem:nonspecial-components} can be applied to them immediately, and isomorphic connected components of $\Gamma_e(\A^{\gamma,\circ}_n)$ and $\Gamma_e(\A^{\lambda,\circ}_n)$ are transformed into isomorphic connected components of $\Gamma_e(\A^{\gamma}_{n+1})$ and $\Gamma_e(\A^{\lambda}_{n+1})$. The same holds for $\Gamma_e(\A^{\gamma,\bullet}_n)$ and $\Gamma_e(\A^{\lambda,\bullet}_n)$. Therefore, $\Gamma_e(\A^{\gamma}_{n+1}) \cong \Gamma_e(\A^{\lambda}_{n+1})$. \qedhere
\end{enumerate}
\end{proof}

\section{Another approach} \label{section:another-approach}

We have shown in Theorem~\ref{theorem:isomorphic-graphs} that for $n \geq 3$ isomorphic algebras $\A_{n+1}$ have isomorphic graphs. However, this is not true for $n \leq 2$, see Subsection~\ref{subsection:distinguish-low-dimensional}. The reason is that we consider elements of the form $(e^{(n)}_i,\pm e^{(n)}_j)$ but, for example, the isomorphism between $\A_3\{1,-1,-1\}$ and $\Hyp_3$ maps some basis elements from the first half to the second half, and vise versa.

This problem will be solved if we consider zero divisors of the form $e^{(n+1)}_i \pm e^{(n+1)}_j$, where $e^{(n+1)}_i$ and $e^{(n+1)}_j$ are distinct elements of $\E'_{n+1}$, $i < j$. Then we will have three types of elements:
\begin{enumerate}[label=(\roman*)]
    \item $(e^{(n)}_i, \pm e^{(n)}_j)$ with $0 < i$; \label{item:element-type-1}
    \item $(e^{(n)}_i \pm e^{(n)}_j, 0)$ with $0 < i < j$; \label{item:element-type-2}
    \item $(0, e^{(n)}_i \pm e^{(n)}_j)$ with $i < j$. \label{item:element-type-3}
\end{enumerate}

Since $(e^{(n)}_i, \pm e^{(n)}_j)$ is a doubly alternative element which satisfies condition~\eqref{equation:norm-condition}, one can infer from~\cite[Lemma~3.14]{our_orthographs1} that elements of type~\ref{item:element-type-1} are not orthogonal to elements of types~\ref{item:element-type-2} and~\ref{item:element-type-3}. Hence the connected components containing them will be the same as in $\Gamma_e(\A_{n+1})$.

As for elements of types~\ref{item:element-type-2} and~\ref{item:element-type-3}, their orthogonality condition is reduced to the nulling of a certain product in $\A_n$. For example, $(a,0)(0,b) = (0, ba) = 0$ in $\A_{n+1}$ if and only if $ba = 0$. We know that $(a,0)$ is pure if and only if $a$ is pure. However, $(0,b)$ is pure even for nonpure $b$. Let $\Re(a) = 0$, $\Re(b) \neq 0$, $ba = 0$. Then, by Proposition~\ref{proposition:orthogonality-condition}, $(a,0)$ is orthogonal to $(0,b)$ but $b$ is not orthogonal to $a$. Therefore, we have to consider not only $\Gamma_O(\A_n)$ but also $\Gamma_Z(\A_n)$.

Note that Lemma~\ref{lemma:basis-pairs-zero-divisors-relation} holds for pairs of zero divisors even if they are not orthogonal. Thence we conclude inductively that this new graph will consist of parts which correspond to all basis elements of $\A_n, \A_{n-1}, \dots, \A_1$, that is, 
\begin{align*}
& C\left(e^{\left(n\right)}_0\right)\!,\: C\left(e^{\left(n\right)}_1\right)\!,\: C\left(e^{\left(n\right)}_2\right)\!,\: \dots\!,\: C\left(e^{\left(n\right)}_{2^n-1}\right)\!,\:\\
& C\left(e^{\left(n-1\right)}_0\right)\!,\: C\left(e^{\left(n-1\right)}_1\right)\!,\: \dots\!,\: C\left(e^{\left(n-1\right)}_{2^{n-1}-1}\right)\!,\:\\
& \dots\\
& C\left(e^{\left(1\right)}_0\right)\!,\: C\left(e^{\left(1\right)}_1\right).
\end{align*}
Every two of them are not connected to each other. The first row corresponds to elements of type~\ref{item:element-type-1}, while the others are inherited from the graph of $\A_n$ and correspond to elements of types~\ref{item:element-type-2} and~\ref{item:element-type-3}.

\medskip

The author is grateful to her scientific advisor Professor Alexander E. Guterman for posing the
problem and fruitful discussions.

\newpage

\appendix

\section{Low-dimensional examples of $\Gamma_e(\A_{n+1})$}

\begin{figure}[H]
\centering
\includegraphics[width=\linewidth]{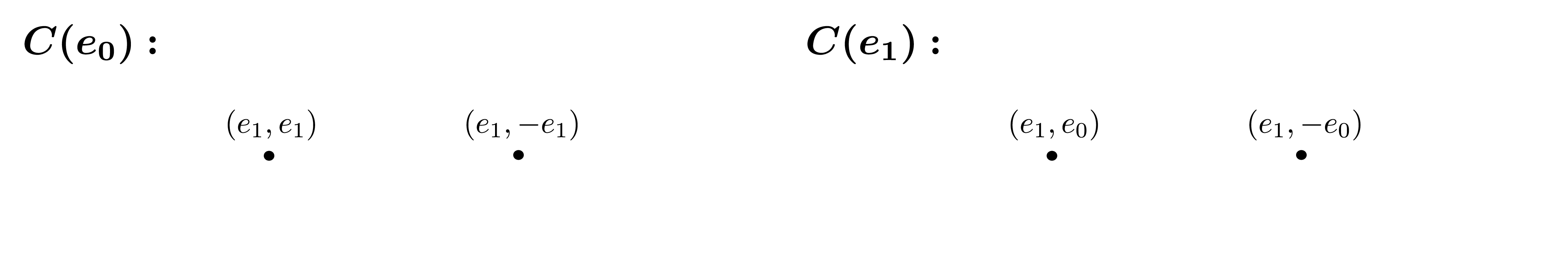}
\caption{\label{figure:H2-graph} The graph $\Gamma_e(\hat{\mathbb{H}})$.}
\end{figure}

\begin{figure}[H]
\centering
\includegraphics[width=\linewidth]{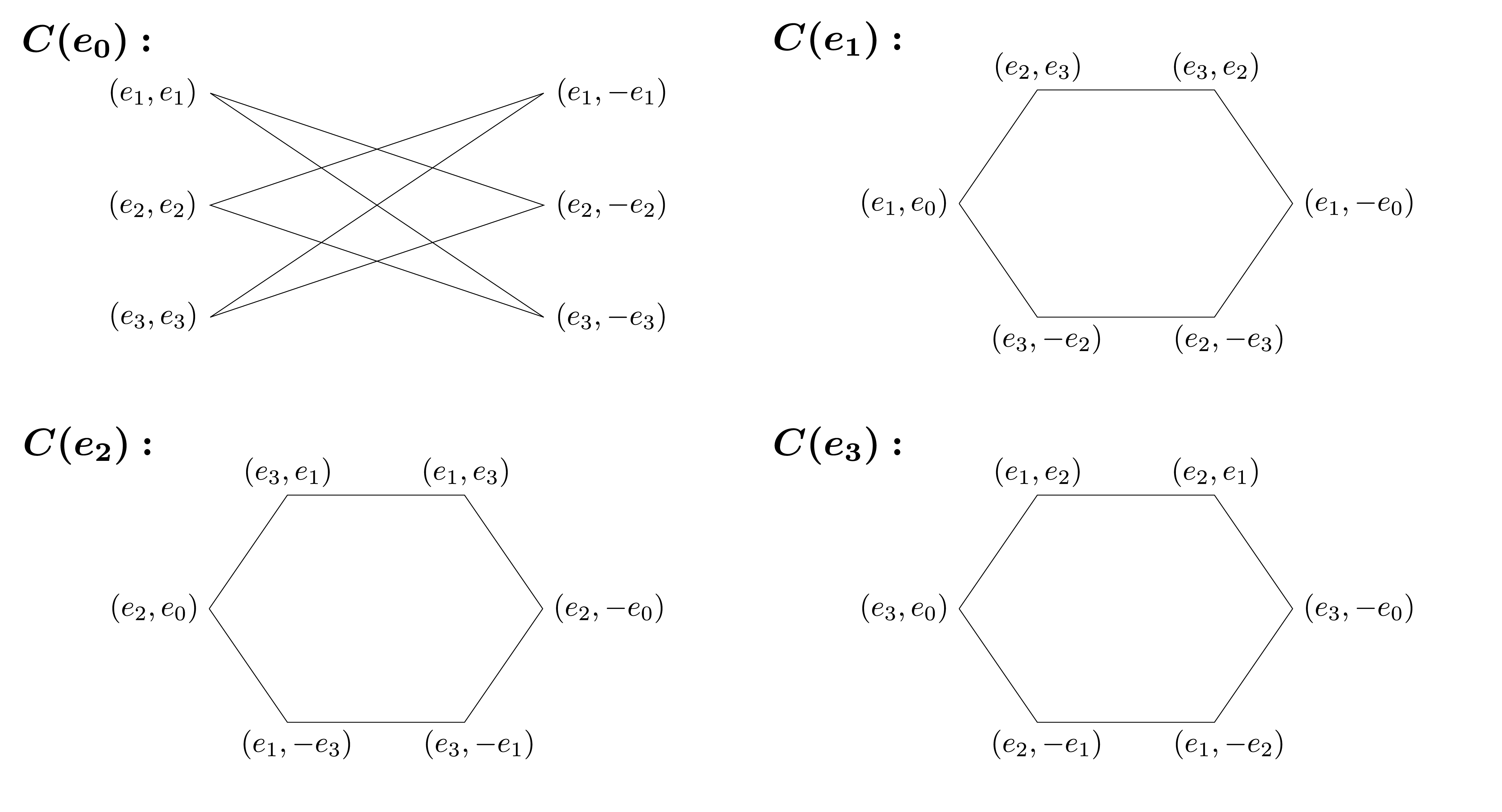}
\caption{\label{figure:H3-graph} The graph $\Gamma_e(\hat{\mathbb{O}})$.}
\end{figure}

\vfill

\newpage

\begin{figure}[H]
\centering
\includegraphics[width=\linewidth]{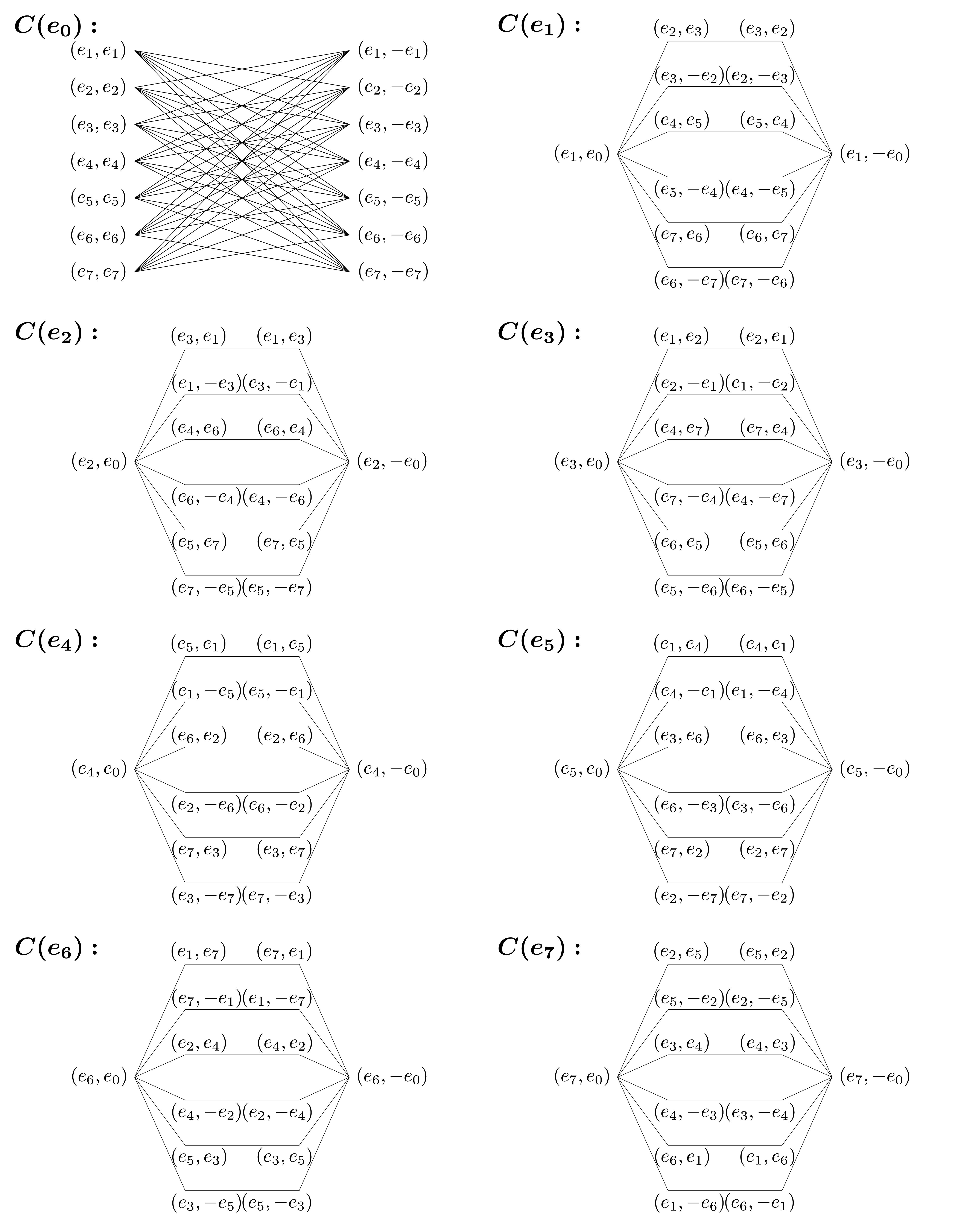}
\caption{\label{figure:H4-graph} The graph $\Gamma_e(\hat{\mathbb{S}})$.}
\end{figure}

\begin{figure}[H]
\centering
\includegraphics[width=\linewidth]{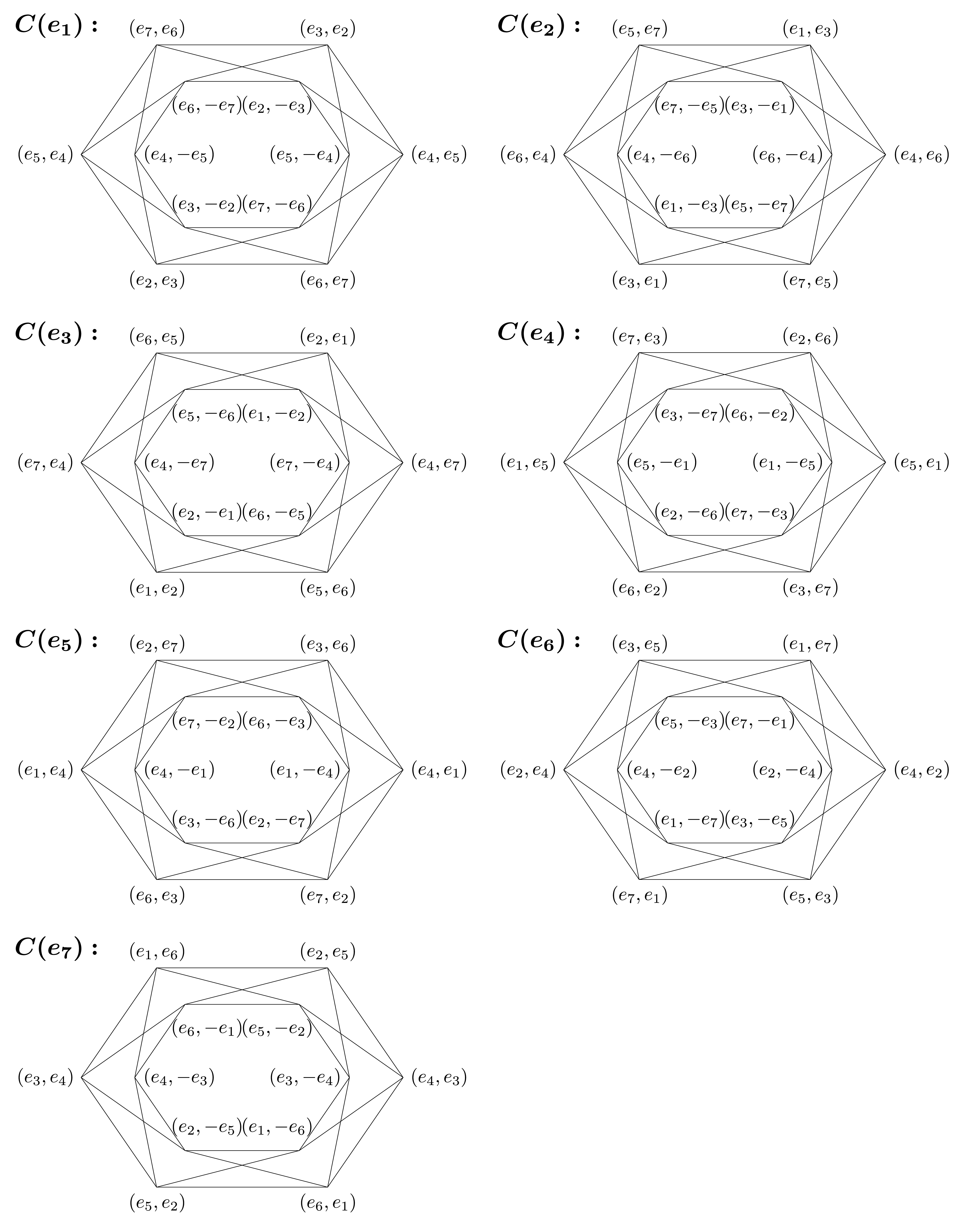}
\caption{\label{figure:M4-graph} The graph $\Gamma_e(\mathbb{S})$.}
\end{figure}

\end{document}